\newcommand\A{\mathcal A}
\newcommand\Z{\mathbb Z}
\newcommand\F{\mathbb F}
\newcommand\N{\mathbb N}
\newcommand\Ne{\mathcal N}
\newcommand\M{\mathcal M}
\newcommand\R{\mathbb R}
\newcommand\U{\mathbb U}
\newcommand\B{\mathfrak B}
\newcommand{\Ms}{\mathcal M_\sigma}
\newcommand\az{{\mathcal A^{\mathbb Z}}}
\newcommand\dm{d_{\mathcal M}}
\DeclareMathOperator{\rank}{rank}
\DeclareMathOperator{\support}{supp}
\DeclareMathOperator{\Ima}{Im}
\newcommand\dual[1]{\widehat{#1}}
\newcommand\confcar[1]{\Psi\left(#1\right)}
\newenvironment{theoremintro}[1]{\begin{center}\begin{minipage}{0.8\textwidth}\textsc{Theorem} (Theorem~\ref{#1}) ---\it}{\end{minipage}\end{center}}
\theoremstyle{plain}
\newtheorem{lemma}{Lemma}
\newtheorem{theorem}{Theorem}
\newtheorem{proposition}{Proposition}
\newtheorem{corollary}{Corollary}
\theoremstyle{definition}
\newtheorem{definition}{Definition}
\newtheorem{example}{Example}
\newtheorem{remark}{Remark}
\newenvironment{proof}[1][\@nil]{\proc{Proof%
     \def\tmp{#1}%
     \ifx\tmp\@nnil .
     \else ~(#1).\fi}}{\ep\medbreak}
\begin{document}

\ETDS{0}{0}{0}{0}
\runningheads{Randomization in Abelian Cellular Automata}{B. Hellouin de Menibus, V. Salo, G. Theyssier}
\title{Characterizing Asymptotic Randomization in\\ Abelian Cellular Automata}
\author{B. Hellouin de Menibus\affil{1}, V. Salo\affil{2}, G. Theyssier\affil{3}}
\address{\affilnum{1} Centro de Modelamiento Matem\'atico (CMM), Universidad de Chile, Chile \& Departamento de Matem\'aticas, Universidad Andrés Bello, Chile\\
  \affilnum{2}  Centro de Modelamiento Matem\'atico (CMM), Universidad de Chile, Chile\\
\affilnum{3} Institut de Mathématiques de Marseille (Université Aix Marseille, CNRS, Centrale Marseille), France
}

\recd{March 2017}

\begin{abstract}
  Abelian cellular automata (CA) are CA which are group endomorphisms of the full group shift when endowing the alphabet with an abelian group structure. A CA randomizes an initial probability measure if its iterated images weak$\mathstrut^\ast$-converge towards the uniform Bernoulli measure (the Haar measure in this setting). We are interested in structural phenomena, i.e. randomization for a wide class of initial measures (under some mixing hypotheses). First, we prove that an abelian CA randomizes in Ces\`aro mean if and only if it has no soliton, i.e. a nonzero finite configuration whose time evolution remains bounded in space. This characterization generalizes previously known sufficient conditions for abelian CA with scalar or commuting coefficients. Second, we exhibit examples of strong randomizers, i.e. abelian CA randomizing in simple convergence; this is the first proof of this behaviour to our knowledge. We show however that no CA with commuting coefficients can be strongly randomizing. Finally, we show that some abelian CA achieve partial randomization without being randomizing: the distribution of short finite words tends to the uniform distribution up to some threshold, but this convergence fails for larger words. Again this phenomenon cannot happen for abelian CA with commuting coefficients.
\end{abstract}

\section{Introduction}
\label{sec:intro}

Cellular automata, although extremely simple to define, provide a rich source of examples of dynamical systems which are not yet well understood. This is in particular true when taking a measure theoretic point of view and studying the evolution of a probability measure under iterations of a CA. The situation can be roughly depicted as follows: for non-surjective CAs, essentially all behaviors that are not prohibited by immediate computability restrictions can happen \cite{hellouindemenibus2016,BoyerDPST15,DelacourtM15}; for the surjective case, various forms of rigidity are observed (see \cite{Pivato2009} for an overview). In particular, since the pioneering work of Lind and Miyamoto on the `addition modulo 2' CA \cite{miyamoto79,Lind84}, many CAs of algebraic origin were shown to behave like randomizers \cite{Maassetal,PivatoYassawi1,PivatoYassawi2,mmpy2006,host2003uniform}, \textit{i.e.} they converge in Ces\`aro mean or in density to the uniform Bernoulli measure from any initial probability measure from a large class $\mathcal{C}$. In \cite{miyamoto79,Lind84}, the class $\mathcal{C}$ is Bernoulli measures of full support. It was later extended to full support Markov measures or N-step Markov processes, measures with complete connections and summable decay of correlations and harmonically mixing measures \cite{Pivato2009} and more \cite{pivato2006, sobottka}.

Apart from specific examples (like in \cite{Maass1999}), the class of CA where randomizing behavior has been shown is essentially contained in that of `linear' CA defined on an abelian group alphabet by
\[F(x)_i = \sum_{j\in V} \theta_j(x_{i+j}),\]
where $\theta_j$ are \emph{commuting} endomorphisms (most of the time automorphisms or scalar coefficients). Furthermore, the type of convergence considered has always been Ces\`aro mean or convergence in density.\\

In this paper we consider the class of harmonically mixing measures and the class of abelian CAs which are like `linear' CA described above but \emph{without} the assumption of commutation of endomorphisms. Our first main result is a complete characterization of randomization in density in that setting.

\begin{theoremintro}{thm:main}
  An abelian CA $F$ randomizes in density any harmonically mixing measure if and only if it does not possess a soliton, \textit{i.e.} a nonzero finite configuration whose set of nonzero cells stays within a bounded diameter under iterations of the CA.
\end{theoremintro}

We show that this theorem extends the most general previous result \cite{PivatoYassawi2} and allows us to easily prove randomization for particular examples, even in the setting of non-commutative coefficients \cite{Maass1999}, hence answering a question of \cite{Pivato2009}.

Our approach uses tools from harmonic analysis using a similar approach to the work of Pivato and Yassawi on diffusion of characters \cite{PivatoYassawi1,PivatoYassawi2}. We rely on the abelian structure of the considered CA to reduce randomization to a combinatorial property of diffusivity. More precisely, we define a dual CA on the (Pontryagin) dual group and show that diffusion in the dual is equivalent to randomization and that the diffusion property is preserved by duality. Finally, we prove the equivalence between diffusivity and the absence of solitons, not by using the abelian structure but by general combinatorial properties of surjective CA. This allows us to go beyond the commuting coefficient case, which was treated in previous works by a careful analysis of binomial coefficients of the iterates of $F$. 

We also prove the existence of a stronger form of randomization where taking subsequences of density $1$ or Ces\`aro mean is not necessary:

\begin{theoremintro}{ref:strongrandomize}
  There exist abelian CAs that randomize in simple convergence any harmonically mixing measure.
\end{theoremintro}

This answers a question of \cite{Kari2015} (Question 59). Experiments on small surjective CAs \cite{These, Taati} suggest that this strong form of randomization is the most common, that it occurs as well on nonabelian CA, and even that randomization occurring only in density (or Ces\`aro mean) might be an artifact of abelian CAs. This confirms the importance of the non-commutative coefficients case, since we also prove that abelian CA with commuting coefficients cannot achieve such a strong form of randomization.\medskip

\begin{center}
\begin{figure}[!h]
  \begin{tabular}{ccc}
    \rotatebox[origin=c]{180}{\includegraphics[width=.3\textwidth]{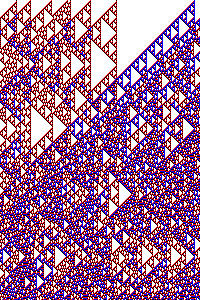}}&\rotatebox[origin=c]{180}{\includegraphics[width=.3\textwidth]{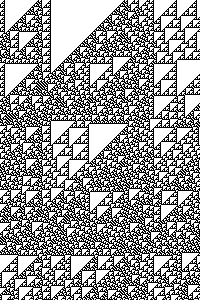}}&\rotatebox[origin=c]{180}{\includegraphics[width=.3\textwidth]{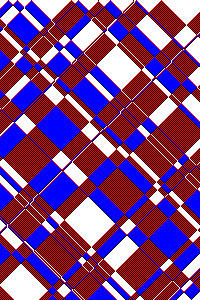}}\\
    Strong randomizer & Randomizer in density & ``Cellwise'' randomizer
  \end{tabular}
  \caption{Three forms of randomization: $F_2$ (defined in Section~\ref{sec:strongrandom}), addition modulo 2, and $I_{\Z_2}$ (defined in Section~\ref{sec:fixedlength}). The direction of time is upward. The CA are iterated on an initial configuration drawn according to a Bernoulli measure with 95\% white (state $0$). The addition modulo $2$ does not converge directly because the image measure is far from the uniform measure around times $t = 2^n$ (see Theorem~\ref{thm:commutingcase}). $I_{\Z_2}$ randomizes individual cells but not cylinders of length $2$ (see Proposition~\ref{prop:cellwise}).}
\end{figure}
\end{center}

The results above are stated as randomization for the class of harmonically mixing measures. We do not investigate when there are more randomized measures (as in~\cite{pivato2006}), however we show that there can not be fewer: if an abelian CA randomizes in density full-support Bernoulli measures, then it randomizes in density all harmonically mixing measures. Interestingly, the rigidity is even stronger for abelian CA with commutative coefficients: we prove that if the frequency of individual states is randomized (in density), then the CA is fully randomizing in density. In the case of non-commutative coefficients, we can have partial randomization: we give examples for any $K$ of abelian CA which do randomize all cylinders up to size $K$ but fail to randomize completely. This suggests that experimental work on randomization in general CA should be done with care: randomization might fail in non-obvious ways on long-range correlations.\\

The paper is organized as follows: in Section~\ref{sec:defs} we recall basic definitions and tools about measure theoretic aspects of cellular automata; in Section~\ref{sec:chidiff} we study the evolution of ranks of characters under iterations of abelian CAs, the property of character diffusion and its link with randomization; in Section~\ref{sec:dualitysolitons} we define the dual of an abelian CA, show that duality preserves diffusivity in density and link this property with the absence of solitons; in Section~\ref{sec:mainthm} we establish our main result, a characterization of randomization in density through the absence of solitons; in Section~\ref{sec:other}, we exhibit a class of examples of strong randomization and randomization up to fixed-length cylinder, and we show that these behaviors are specific to CA with non-commuting coefficients; finally, in Section~\ref{sec:open} we give some directions for further research on this topic.

\section{Definitions and tools}
\label{sec:defs}

Throughout this paper we will state our results for dimension $1$, but they extend straightforwardly to the $d$-dimensional case. Our convention on natural number is $0 \in \N$.

Let $\A$ be a finite alphabet. We define $\A^\ast = \bigcup_{n \in \N} \A^n$ to be the set of finite \emph{words}, and $\az$ the set of (one-dimensional) \emph{configurations}. For a finite set $U\subset \Z$ and $u\in\A^U$, define the \emph{cylinder}:
\[[u]_U = \{x\in\az\ :\ x|_U = u\}.\]
For $u\in\A^n$ and $k\in\Z$, also define $[u]_k = [u]_{\{k,\dots, k+n-1\}}$ and $[u] = [u]_0$.

We endow $\A^\Z$ with the product topology, which is metrizable using the \emph{Cantor distance}:
\[\forall x,y\in\az, d(x,y) = 2^{-\Delta(x,y)} \quad \text{where}\quad \Delta(x,y) = \min\{|i| : x_i \neq y_i\}.\]

The \emph{shift map} is defined by 
\[\forall x\in\az, \sigma(x) = (x_{i+1})_{i\in\Z}.\]

A \emph{cellular automaton} is a pair $(\A, F)$ where $F : \az\to \az$ is a continuous function that commutes with the shift map (i.e. $F\circ \sigma = \sigma\circ F$). Equivalently, $F$ is defined by a finite neighborhood $\Ne\subset \Z$ and a \emph{local rule} $f : \A^\Ne \to \A$ in the sense that:
\[\forall x\in\az,\forall i\in\Z, F(x)_i = f(x_{i+\Ne}).\]

\newcommand\confplus{+}

Let ${(\A,+)}$ be an abelian group and $0$ its neutral element. 
A \emph{finite configuration} is a configuration ${x\in\az}$ such that ${x(i)=0}$ for all ${i\in\Z}$ except on a finite set.
If $x$ is a finite configuration, we define its \emph{support} by ${\support(x) = \{i\in\Z : x(i)\neq 0\}}$ and its \emph{rank} by ${\rank(x)=|\support(x)|}$. Note that the set of finite configurations is dense in ${\az}$. The notion of finite points makes sense also when $\A$ has no group structure, assuming a choice of zero element $0 \in \A$ has been made.

An \emph{abelian cellular automaton} $F$ is a cellular automaton which is an endomorphism for $(\az, \confplus)$ (componentwise addition):

\[\forall x,y\in\az, F(x\confplus y) = F(x)\confplus F(y)\]
\newcommand\alfaendo{\phi}
\newcommand\confendo{\overline{\alfaendo}}

Equivalently, $F$ is a finite sum of shifts composed with endomorphisms of ${(\A,+)}$. More precisely, there is a finite ${\Ne\subset\Z}$ and a collection ${(\alfaendo_i)_{i\in\Ne}}$ of endomorphisms of ${(\A,+)}$ such that:

\[F = \sum_{i\in\Ne} \confendo_i\circ\sigma^{i}, \qquad\mbox{where}\qquad \confendo_i: \begin{array}{ccc} \az &\to &\az\\x&\mapsto&\bigl(\alfaendo_i(x(j))\bigr)_j\end{array}.\] Note that the image of a finite configuration is always a finite configuration. In particular, $0 \in \A$ is a \emph{quiescent state}, meaning $F(0^\Z) = 0^\Z$ where $0^\Z$ denotes the constant-$0$ configuration.

  We also define addition on abelian CA by $F+F' : x\mapsto F(x)+F'(x)$.

We say that $F$ has \emph{commuting endomorphisms} if the endomorphisms ${\alfaendo_i}$ commute pairwise. Supposing $\A$ is a vector space over a finite field ${\F_p}$ turns out to be a source of simple yet illustrative examples. 
As said above, we are particularly interested in the non-commuting case. We will illustrate our results with the two representatives $F_2$ and $H_2$ defined over ${\A=\F_2^2}$ by 


\begin{align*}
F_2(x)_i &=
\begin{pmatrix}
        0 & 1\\
        1 & 0
      \end{pmatrix}\cdot x_i + 
      \begin{pmatrix}
        1 & 0\\
        0 & 0
      \end{pmatrix}\cdot x_{i+1}\\
H_2(x)_i &= 
      \begin{pmatrix}
        1 & 0\\
        0 & 0
      \end{pmatrix}\cdot x_{i-1}+ \begin{pmatrix}
        0 & 1\\
        1 & 0
      \end{pmatrix}\cdot x_i + 
      \begin{pmatrix}
        1 & 0\\
        0 & 0
      \end{pmatrix}\cdot x_{i+1}
\end{align*}
where elements of $\A$ are seen as vectors and matrix notation is used to denote endomorphisms of $\A$.

\subsection{Cellular automata acting on probability measures}

Let $\M(\az)$ be the space of probability measures on the Borel sigma-algebra of $\az$. In particular, we consider $\Ms(\az)$, the subset of all $\sigma$-invariant measures. Here are a few examples that we mention throughout the paper:

\begin{description}
\item[Bernoulli measure] Take $(\beta_i)_{i\in\A} \in [0,1]^\A$ such that $\sum_i \beta_i=1$. Let $\beta$ be the usual Bernoulli measure of parameters $(\beta_i)$ on $\A$. The \emph{Bernoulli measure} of parameters $(\beta_i)$ on $\az$ is defined as $\mu = \otimes_\Z\beta$; that is, each cell is drawn in a i.i.d manner and distributed as a Bernoulli measure. In other words,
\[\forall u\in\A^\ast, \mu([u]) =\prod_{0\leq i<|u|}\beta_{u_i}.\]
A particularly important example is the uniform Bernoulli measure on $\az$, denoted $\lambda$, which is the Bernoulli measure of parameters $\left(\frac 1{|\A|}\right)_{i\in\A}$.
\item[Markov measure] Let $(p_{i,j})_{i,j\in\A}$ be a nonnegative matrix satisfying $\sum_j p_{ij} = 1$ for all $i$, and
$(\mu_i)_{i\in\A}$ an eigenvector associated with the eigenvalue 1 (the choice being unique if the matrix is irreducible). 
The associated \emph{two-step Markov measure} is defined as 
\[\forall u\in\A^\ast, \mu([u]) = \mu_{u_0}\prod_{0\leq i<|u|}p_{u_iu_{i+1}}.\] This can be extended to $n$-step Markov measures.
\end{description}

The weak-* topology on $\M(\az)$ is metrisable. A possible metric is given by the distance:
\[\dm(\mu, \nu) = \sum_{k\in\N} \frac 1{2^k}\max_{u\in\A^{2k+1}} |\mu([u]_{-k}) - \nu([u]_{-k})|.\]

A cellular automaton $(\A, F)$ yields a continuous action on the space of probability measures $\M(\az)$:
\[\text{For any Borel set } U,\ F\mu(U) = \mu(F^{-1} U)\]

Given a subset ${X\subseteq\N}$ of natural numbers, its \emph{lower density} is defined as ${\liminf_n \frac{|\{i\in X, i\leq n\}|}{n}}$. 
We prove a diagonalization lemma for sequences of lower density one, stating, informally, that the intersection of countably many sequences of lower density one ``eventually has lower density one''.

\begin{lemma}
\label{lem:DensityDiagonalization}
Suppose that $I$ is countable, and for all $n \in I$, $N_n \subset \N$ is a set with lower density one. Then there there exists a set $N \subset \N$ of lower density one such that for all $n \in I$, $N \cap [k, \infty) \subset N_n \cap [k, \infty)$ for all large enough $k$.
\end{lemma}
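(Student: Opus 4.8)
The plan is to diagonalize through a nested family of finite intersections, steered by a slowly growing ``level'' function.

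First I would record the elementary fact that a \emph{finite} intersection of sets of lower density one again has lower density one: its complement is a finite union of sets of upper density zero, and upper density is subadditive. Then, enumerating $I = \{n_1, n_2, \dots\}$ (the cases where $I$ is empty or finite being immediate — one may just take $N = \bigcap_{n \in I} N_n$), set $M_j = N_{n_1} \cap \dots \cap N_{n_j}$, so that $M_1 \supseteq M_2 \supseteq \cdots$ is decreasing and each $M_j$ has lower density one. For each $j$ pick a threshold $t_j$ with $|M_j \cap [1,m]| \geq (1 - 1/j)\, m$ for all $m \geq t_j$, chosen so that $t_1 < t_2 < \cdots$, and define $g \colon \N \to \N$ by $g(m) = \max\{ j : t_j \leq m \}$ for $m \geq t_1$ and $g(m) = 1$ otherwise. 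Then $g$ is nondecreasing and $g(m) \to \infty$. The candidate set is
\[
N = \{\, m \in \N : m \in M_{g(m)} \,\}.
\]

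Next I would verify the two required properties. For the inclusion: if $m \in N$ and $m \geq k$, then $m \in M_{g(m)} \subseteq M_{g(k)}$ by monotonicity of $g$ and of $(M_j)_j$, and $M_{g(k)} \subseteq N_{n_p}$ as soon as $g(k) \geq p$; since $g(k) \to \infty$, for each fixed $p$ this holds for all large $k$, which is exactly the statement that $N \cap [k,\infty) \subseteq N_{n_p} \cap [k,\infty)$ for all large $k$. For the density: since $g$ is nondecreasing, any $s \leq m$ lying in $M_{g(m)}$ already lies in $M_{g(s)}$, hence in $N$, so $N \cap [1,m] \supseteq M_{g(m)} \cap [1,m]$; therefore, for $m \geq t_1$,
\[
\frac{|N \cap [1,m]|}{m} \;\geq\; \frac{|M_{g(m)} \cap [1,m]|}{m} \;\geq\; 1 - \frac{1}{g(m)},
\]
and the right-hand side tends to $1$ as $m \to \infty$, so $N$ has lower density one.

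The only genuine idea is the diagonalization itself: realizing that $N$ should be bounded below by $M_{g(m)}$ for a level $g(m)$ that grows slowly enough to stay under the thresholds $t_j$ while still tending to infinity. Everything after the construction is bookkeeping, so I expect no serious obstacle beyond setting up the level function correctly; the main point to double-check is that monotonicity of $g$ simultaneously delivers the inclusion (via $M_{g(m)} \subseteq M_{g(k)}$) and the density lower bound (via $M_{g(m)} \cap [1,m] \subseteq N$).
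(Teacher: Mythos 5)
Your proof is correct and is essentially the paper's own argument: both reduce to a decreasing chain of finite intersections with lower density one, choose thresholds past which the $j$-th intersection has density at least $1-1/j$, and then define $N$ to agree with the $j$-th intersection on the $j$-th block (your level function $g$ is just a repackaging of the paper's partition of $\N$ into intervals $[m_{n-1},m_n)$). The verification of the density bound and of the eventual inclusions matches the paper's step by step.
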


\begin{proof}
We can assume $I = \N$. The intersection of finitely many sets of lower density one has lower density one. Thus, we may assume the $N_n$ form a decreasing sequence (with respect to inclusion) by replacing each $N_n$ by $N_0 \cap N_1 \cap \cdots \cap N_n$.

Now, let $m_0 = 0$ and pick an increasing sequence of natural numbers $(m_n)_{n \geq 1}$ such that $|N_n \cap [0, m)| \geq m (1 - 1/n)$ for all $m \geq m_n$, using the fact $N_n$ has lower density one. Define $N \cap [m_{n-1}, m_n) = N_n \cap [m_{n-1}, m_n)$ for all $n$. Then
\begin{align*}
 |N \cap [0, m)| &= |N_1 \cap [0, m_1)| + \cdots + |N_\ell \cap [m_{\ell-1}, m_\ell)| + |N_{\ell+1} \cap [m_{\ell}, m)|\\
& \geq |N_\ell \cap [0, m)| \\
&\geq m (1 - 1/\ell) 
\end{align*}
where $\ell$ is maximal such that $m_{\ell} < m$, and where the first inequality follows because the $N_n$ form a decreasing sequence under inclusion.
\end{proof}

Considering the iterated action of a cellular automaton on an initial measure $\mu$, we distinguish various forms of convergence:

\begin{itemize}
\item $(F^t\mu)_{t\in\N}$ \emph{converges} to $\nu$ if $F^t\mu\to \nu$ (for the weak-$\ast$ convergence); equivalently, $F^t\mu([u])\to\nu([u])$ for every finite word $u$;
\item $(F^t\mu)_{t\in\N}$ \emph{converges in Cesàro mean} to $\nu$ if $\displaystyle\frac 1T\sum_{t=0}^{T-1}F^t\mu\to \nu$; equivalently, if $\displaystyle\frac 1T\sum_{t=0}^{T-1}F^t\mu([u])\to \nu([u])$  for every finite word $u$;
\item $(F^t\mu)_{t\in\N}$ \emph{converges in density} to $\nu$ if there exists an increasing sequence $(\varphi(t))_{t\in\N}$ of lower density $1$ such that $F^{\varphi(t)}\mu\to \nu$; equivalently by Lemma~\ref{lem:DensityDiagonalization}, if for every finite word $u$ there exists an increasing sequence $(\varphi_u(t))_{t\in\N}$ of lower density $1$ such that $F^{\varphi_u(t)}\mu([u])\to \nu([u])$;
\item $(F^t\mu)_{t\in\N}$ \emph{converges on cylinders of support $\subset \U$} to $\nu$ if $\mu(\cdot\ |\ \B_\U) \to \nu (\cdot\ |\ \B_\U)$ where $\B_\U$ is the Borel $\sigma$-algebra generated by the cylinders of support $\subset \U$ (this can be seen as convergence of measures of $\M(\A^\U)$); equivalently, $F^{t}\mu([u])\to \nu([u])$ for every word $u$ with $\support(u)\subset \U$.
\end{itemize}

Recall that, in a context where the alphabet is $\A$, $\lambda$ is the uniform Bernoulli measure on $\az$.

\begin{definition}[Randomization]
Let $F : \az\to\az$ be a cellular automaton and $\M\subset \M(\az)$ be a class of initial measures.\bigskip

$F$ \emph{strongly randomizes} $\M$ (resp. in Cesàro mean, in density, on cylinders of support $\U$) if, for all $\mu\in\M$, $(F^t\mu)_{t\in\N}$ converges to $\lambda$ (resp. in Cesàro mean, in density, on cylinders of support $\U$).
\end{definition}


\begin{proposition}
Strong randomization implies all other forms of randomization, and randomization in Cesàro mean is equivalent to randomization in density for $\sigma$-invariant measures.
\end{proposition}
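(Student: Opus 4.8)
The plan is to treat the statements in order of difficulty, the last being the only one that really uses $\sigma$-invariance.

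\emph{Strong randomization implies all the rest.} If $F^t\mu\to\lambda$ weak-$\ast$, then for every finite word $u$ the bounded real sequence $\bigl(F^t\mu([u])\bigr)_t$ converges to $\lambda([u])$, hence so does each Cesàro average $\frac1T\sum_{t<T}F^t\mu([u])$; to get $\frac1T\sum_{t<T}F^t\mu\to\lambda$ in the metric $\dm$ one just takes, at each level $k$, the maximum over the finitely many words of length $2k+1$ and controls the tail $\sum_{k>K}2^{-k}$. Randomization in density then follows with $\varphi(t)=t$, and randomization on cylinders of support $\subset\U$ follows because $F^t\mu([u])\to\lambda([u])$ for \emph{all} words $u$.

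\emph{Randomization in density implies randomization in Cesàro mean} (for any measure). Fixing $u$ and writing $a_t=F^t\mu([u])\in[0,1]$, $L=\lambda([u])$: from a sequence $\varphi$ of lower density $1$ with $a_{\varphi(t)}\to L$ one sees that for every $\epsilon>0$ the set $\{t:|a_t-L|>\epsilon\}$ has density $0$, whence $\bigl|\frac1T\sum_{t<T}a_t-L\bigr|\le\frac1T\sum_{t<T}|a_t-L|\le\epsilon+\frac1T|\{t<T:|a_t-L|>\epsilon\}|\to\epsilon$; letting $\epsilon\to0$ gives $\frac1T\sum_{t<T}a_t\to L$, and since $u$ is arbitrary this is Cesàro randomization.

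\emph{Randomization in Cesàro mean implies randomization in density for $\sigma$-invariant $\mu$} is the main point, and I expect it to be the only real obstacle. Since $F$ commutes with $\sigma$, each $F^t\mu$ is $\sigma$-invariant — this is where the hypothesis enters. Fix a word $u$ of length $n$, keep $a_t,L$ as above; Cesàro randomization already gives $\frac1T\sum_{t<T}a_t\to L$, and I would reduce everything to proving $\frac1T\sum_{t<T}a_t^2\to L^2$: granting this, $\frac1T\sum_{t<T}(a_t-L)^2\to0$, so by Chebyshev's inequality $\{t:|a_t-L|>\epsilon\}$ has density $0$ for each $\epsilon$, i.e. $a_t\to L$ in density. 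The inequality $\liminf_T\frac1T\sum_{t<T}a_t^2\ge L^2$ is just Cauchy–Schwarz; the content is the matching upper bound, and the crux is to find, for each $K\ge1$, a weak-$\ast$-continuous \emph{affine} functional
\[
\Phi_K(\nu)=\Bigl\|\tfrac1K{\textstyle\sum_{k=0}^{K-1}}\mathbf 1_{[u]_{kn}}\Bigr\|_{L^2(\nu)}^2=\tfrac1{K^2}\sum_{j,k=0}^{K-1}\nu\bigl([u]_{jn}\cap[u]_{kn}\bigr)
\]
— a fixed finite combination of probabilities of explicit cylinders — that dominates the non-affine quantity $\nu\mapsto\nu([u])^2$ yet degenerates to $\lambda([u])^2$ as $K\to\infty$. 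The domination comes from Cauchy–Schwarz in $L^2(\nu)$ against the constant function $1$: for $\sigma$-invariant $\nu$, $\Phi_K(\nu)\ge\bigl(\tfrac1K\sum_k\nu([u]_{kn})\bigr)^2=\nu([u])^2$, using $\nu([u]_{kn})=\nu([u])$. Applied at $\nu=F^t\mu$ this gives $a_t^2\le\Phi_K(F^t\mu)$; averaging over $t<T$, using affineness, and then Cesàro randomization,
\[
{\textstyle\frac1T\sum_{t<T}}a_t^2\ \le\ \Phi_K\!\Bigl({\textstyle\frac1T\sum_{t<T}}F^t\mu\Bigr)\ \xrightarrow[T\to\infty]{}\ \Phi_K(\lambda)=L^2+\tfrac{L(1-L)}{K},
\]
the last value coming from the independence of the events $[u]_{kn}$ under $\lambda$. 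Letting $K\to\infty$ gives $\limsup_T\frac1T\sum_{t<T}a_t^2\le L^2$, which together with the earlier liminf bound yields $\frac1T\sum_{t<T}a_t^2\to L^2$, hence $F^t\mu([u])\to\lambda([u])$ in density for every word $u$. Applying Lemma~\ref{lem:DensityDiagonalization} to the countably many density-one sets $\{t:|F^t\mu([u])-\lambda([u])|\le 1/k\}$ ($u$ ranging over words, $k\ge1$) then produces a single increasing sequence $\varphi$ of lower density $1$ along which $F^{\varphi(t)}\mu([u])\to\lambda([u])$ for all $u$, i.e. $F^{\varphi(t)}\mu\to\lambda$. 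The delicate spot in the whole argument is exactly the construction of $\Phi_K$: a naive attempt to dominate $a_t^2$ by a \emph{fixed} finite average of pair correlations fails, since Cesàro-type averages need not lie above their own limit, and it is the Fejér-type positivity hidden in $\Phi_K=\|\cdot\|_{L^2}^2$ that makes the Cauchy–Schwarz step go through.
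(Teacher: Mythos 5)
Your proof is correct, but your route through the Cesàro-to-density implication is genuinely different from the paper's. The paper argues softly by contraposition: if density convergence fails, a set of times of positive upper density $\alpha$ keeps $F^t\mu$ outside a ball $B(\lambda,\varepsilon)$, so the Cesàro averages accumulate (by compactness of $\Ms(\az)$) at points of the form $\alpha\nu+(1-\alpha)\eta$ with $\nu$ in the convex hull of $\Ms(\az)\setminus B(\lambda,\varepsilon)$, and such a point cannot equal $\lambda$ because $\lambda$ is extremal (i.e.\ ergodic) in $\Ms(\az)$. You instead run a quantitative second-moment argument: your affine, weak-$\ast$-continuous functional $\Phi_K$ dominates $\nu\mapsto\nu([u])^2$ on shift-invariant measures by Cauchy--Schwarz, evaluates at $\lambda$ to $L^2+L(1-L)/K$ by independence of disjoint blocks under the product measure, and Chebyshev then converts the vanishing of the averaged variance into density convergence; Lemma~\ref{lem:DensityDiagonalization} assembles the word-by-word statements into a single density-one sequence exactly as the paper's definition permits. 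Both arguments ultimately exploit ergodicity of $\lambda$ --- the paper through abstract extremality, you through the concrete decorrelation $\Phi_K(\lambda)\to L^2$ --- but yours is more elementary and self-contained (no compactness or convex-hull/accumulation-point step, which the paper's proof handles rather tersely), and you also spell out the easy direction (density implies Cesàro) that the paper leaves implicit; the price is that your bound uses the product structure of $\lambda$ rather than working for an arbitrary extremal point.
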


\begin{proof}
  The first point is clear. The second point stems from the fact that the uniform Bernoulli measure $\lambda$ is an extremal point of $\Ms(\az)$ and $\Ms(\az)$ is compact. We prove this point by contraposition.
  Assume that $(F^t\mu)_{t\in\N}$ does not converge to $\lambda$ in density. Then there must exist some $\varepsilon>0$ and some sequence $(\varphi(t))$ of upper density $\alpha>0$ such that $F^{\varphi(t)}\mu \notin B(\lambda, \varepsilon)$, where $B(\lambda, \varepsilon)$ is the open ball of radius $\varepsilon$ centered on $\lambda$ (otherwise for any $n>0$ the set of times $t$ with ${F^{t}\mu \in B(\lambda, \frac{1}{n})}$ would be of density $1$ and by Lemma~\ref{lem:DensityDiagonalization}, $(F^t\mu)_{t\in\N}$ would converge to $\lambda$ in density). Therefore there exists a sequence of times $(T_i)_{i\in\N}$ such that $\frac {T_i}{\varphi(T_i)} \to \alpha$. Then: 

  \[\frac 1{\varphi(T_i)+1}\sum_{t=0}^{\varphi(T_i)}F^t\mu = \frac 1{\varphi(T_i)+1}\sum_{t=0}^{T_i}F^{\varphi(t)}\mu + \frac 1{\varphi(T_i)+1}\sum_{\substack{t=0\\t\notin\varphi(\N)}}^{\varphi(T_i)}F^t\mu .\]

  Let $\mathcal C$ be the convex hull of $\Ms(\az)\backslash B(\lambda, \varepsilon)$. By compactness, this sequence admits accumulation points which must be of the form $\alpha\nu + (1-\alpha)\eta$ for some $\nu\in\mathcal C$ and $\eta\in\Ms(\az)$. However, since $\lambda$ is extremal in $\Ms(\az)$, $\lambda \notin \mathcal C$, so that $\lambda \neq \nu$, and $\lambda \neq \alpha\nu + (1-\alpha)\eta$. In other words, the sequence $(\frac 1{T+1} \sum F^t\mu)_{t\in\N}$ admits some accumulation point which is not $\lambda$, and we conclude.
\end{proof}
\subsection{Fourier theory}

\begin{definition}[Character]
A \emph{character} of a topological group $\mathcal G$ is a continuous group homomorphism $\mathcal G \to \mathbb T^1$, where $\mathbb T^1$ is the unit circle group (under multiplication). Denote by $\dual{\mathcal G}$ the group of characters of $\mathcal G$ under elementwise multiplication.
\end{definition}

The following result is well-known (see e.g. \cite{Deitmar}, Lemma 4.1.3.):

\begin{proposition}\label{prop:isodual}
  Any finite abelian group $\mathcal G$ is isomorphic to its dual $\dual{G}$.
\end{proposition}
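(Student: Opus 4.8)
The plan is to reduce to the cyclic case via the structure theorem for finite abelian groups. First I would check that duality is compatible with direct products: for finite abelian groups $A$ and $B$, the map that sends a character $\chi$ of $A\times B$ to the pair $\bigl(\chi|_{A\times\{0\}},\ \chi|_{\{0\}\times B}\bigr)$ is a group homomorphism $\dual{A\times B}\to\dual A\times\dual B$, and it is invertible with inverse $(\alpha,\beta)\mapsto\bigl((a,b)\mapsto\alpha(a)\beta(b)\bigr)$; hence $\dual{A\times B}\cong\dual A\times\dual B$. Iterating, and using that the structure theorem gives $\mathcal G\cong\Z/n_1\Z\times\cdots\times\Z/n_k\Z$, it suffices to prove the statement for $\mathcal G=\Z/n\Z$.

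For the cyclic case, I would observe that a character $\chi:\Z/n\Z\to\mathbb T^1$ is completely determined by the single value $\chi(1)$, and that since $1$ has order $n$ in $\Z/n\Z$ the element $\chi(1)$ must satisfy $\chi(1)^n=1$, i.e. $\chi(1)$ is an $n$-th root of unity; conversely every $n$-th root of unity $\omega$ defines a character $m\mapsto\omega^m$. Thus $\chi\mapsto\chi(1)$ is a bijection from $\dual{\Z/n\Z}$ onto the group $\mu_n=\{\omega\in\mathbb T^1:\omega^n=1\}$ of $n$-th roots of unity, and it is a homomorphism because $(\chi\chi')(1)=\chi(1)\chi'(1)$. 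Since $\mu_n$ is cyclic of order $n$ (generated by $e^{2\pi i/n}$), we conclude $\dual{\Z/n\Z}\cong\Z/n\Z$, and combining with the previous paragraph, $\dual{\mathcal G}\cong\mathcal G$.

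There is no real obstacle here — this is a classical fact — so the only things to be careful about are the routine verifications that the two product maps above are mutually inverse homomorphisms and that continuity of characters is automatic in the finite (discrete) setting. The one conceptual caveat worth stating is that the isomorphism produced this way is \emph{not} canonical: it depends on the chosen cyclic decomposition of $\mathcal G$ and on the chosen primitive roots of unity. This is harmless for our purposes, since only the existence of an abstract isomorphism $\mathcal G\cong\dual{\mathcal G}$ is used later. (If one preferred to avoid the structure theorem, an alternative is to show by induction on $|\mathcal G|$ that characters separate points of $\mathcal G$ and that $|\dual{\mathcal G}|=|\mathcal G|$, but the route above is shorter.)
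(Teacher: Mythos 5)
Your proof is correct and is the standard argument: reduce to cyclic groups via the structure theorem and identify $\dual{\Z/n\Z}$ with the $n$-th roots of unity. The paper does not prove this proposition itself but cites it as well known (Deitmar, Lemma 4.1.3), where essentially this same argument appears, so there is nothing to reconcile; your remark that the isomorphism is non-canonical is also consistent with how the paper uses the result (only the canonical isomorphism with the \emph{double} dual is invoked later, in Lemma~\ref{lem:dualcompose}).
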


If $\A$ is a finite abelian group, $\widehat{\az}$ is in bijective correspondance with the sequences of $(\dual{\A})^\Z$ whose elements are all $\mathbf 1$ except for a finite number. That is, $\chi \in \widehat{\az}$ can be written as $\chi(x) = \prod_{k\in\Z} \chi_k(x_k)$ where all but finitely many elements are equal to 1. In this context we call the elements of $\widehat\A$ \emph{elementary characters}.

\begin{definition}
Let $\chi \in \widehat{\az}$ and $(\chi_i)_{i\in\Z}$ its decomposition in elementary characters. 
The \emph{support} of $\chi$ is $\support(\chi) = \{i\in\Z\ :\ \chi_i \neq 1\}$. Its \emph{rank} is $\rank(\chi) = |\support(\chi)|$.
\end{definition}

\begin{definition}[Fourier coefficients, or Fourier-Stieltjes transform]
The \emph{Fourier coefficients} of a measure $\mu\in\M(\az)$ are given by:
\[\dual\mu[\chi] = \int_\az \chi d\mu\]
for all characters $\chi\in\widehat{\az}$. For a character $\chi = \prod_{k\in S}\chi_k$ (where $S = \support(\chi)$), this can be rewritten as a finite sum:
\[\dual\mu[\chi] = \sum_{u\in\A^S} \prod_{k\in S}\chi_k(u_k)\cdot\mu([u]_S)\]
\end{definition}

The Fourier coefficients of $\mu$ completely characterize it. They also behave well with regard to convergence in (weak-*) topology:

\begin{theorem}[L\'evy's continuity theorem]
Let $G$ be a locally compact abelian group and $\mu_1, \mu_2, \dots \mu_\infty \in \M(G)$. Then:
\[\mu_n \to \mu_\infty \mbox{ in the weak-* topology} \quad \Longleftrightarrow \quad\forall \chi\in\widehat{G},\ \dual{\mu_n}[\chi] \to \dual{\mu_\infty}[\chi].\]
\end{theorem}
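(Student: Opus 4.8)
The forward implication needs nothing new: every character $\chi\in\widehat G$ is a continuous, bounded ($\mathbb T^1$-valued) function on $G$, so weak-$*$ convergence $\mu_n\to\mu_\infty$ immediately gives $\dual{\mu_n}[\chi]=\int_G\chi\,d\mu_n\to\int_G\chi\,d\mu_\infty=\dual{\mu_\infty}[\chi]$ for each fixed $\chi$. For the converse, the plan is to first treat the only case used in the rest of the paper --- $G=\az$ with $\A$ finite, hence $G$ compact and metrizable --- by a Stone--Weierstrass argument, and then indicate the modifications needed for a general locally compact abelian group.

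In the compact case I would consider the linear span $\mathcal T\subseteq C(G)$ of the characters and observe that it is a unital $*$-subalgebra of $C(G)$: it contains the constant $\mathbf 1$ (the trivial character), it is closed under products because $\widehat G$ is a group under pointwise multiplication, and it is closed under complex conjugation because $\overline{\chi}=\chi^{-1}\in\widehat G$ for any $\mathbb T^1$-valued $\chi$. Distinct characters separate the points of $G$ --- for $G=\az$ this is clear, since already the coordinate characters $x\mapsto\chi_k(x_k)$ separate configurations --- so Stone--Weierstrass gives that $\mathcal T$ is uniformly dense in $C(G)$. A three-$\varepsilon$ argument then finishes: given $f\in C(G)$ and $\varepsilon>0$, choose $P=\sum_{j=1}^m c_j\chi_j\in\mathcal T$ with $\|f-P\|_\infty<\varepsilon$, so that for every $n$
\[
  \left| \int_G f\,d\mu_n - \int_G f\,d\mu_\infty \right|
  \;\leq\; 2\varepsilon + \sum_{j=1}^m |c_j|\,\bigl| \dual{\mu_n}[\chi_j] - \dual{\mu_\infty}[\chi_j] \bigr| ;
\]
the finite sum on the right tends to $0$ by hypothesis, giving $\limsup_n | \int f\,d\mu_n - \int f\,d\mu_\infty | \leq 2\varepsilon$, and letting $\varepsilon\to 0$ yields $\mu_n\to\mu_\infty$ in the weak-$*$ topology.

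For a general locally compact abelian $G$, trigonometric polynomials are no longer dense in $C_0(G)$, so I would instead: (i) show $(\mu_n)$ is uniformly tight, using continuity of $\dual{\mu_\infty}$ at the identity of $\widehat G$ (automatic, being the transform of a finite measure) together with $\dual{\mu_\infty}[\mathbf 1]=1$ and the pointwise convergence, via the classical estimate bounding $\mu_n(G\setminus K)$ for an appropriate compact $K$ by an average of $1-\mathrm{Re}\,\dual{\mu_n}$ over a small neighborhood of the identity; (ii) invoke Prokhorov, so that every subsequence of $(\mu_n)$ has a weak-$*$ convergent sub-subsequence with some limit $\nu\in\M(G)$; (iii) apply the forward implication and the uniqueness theorem for the Fourier--Stieltjes transform to get $\dual\nu=\dual{\mu_\infty}$, hence $\nu=\mu_\infty$, so the whole sequence converges. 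I expect the tightness estimate in step (i) to be the only genuinely analytic obstacle in the whole proof; since only the compact case is needed in what follows, one could equally well just cite a standard harmonic-analysis reference (in the spirit of \cite{Deitmar}) for the general statement.
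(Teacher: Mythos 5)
The paper does not prove this statement at all: it is quoted as a classical result, with the text immediately below the theorem pointing to L\'evy's original paper and to Varadhan for the locally compact abelian extension. So any proof you give is by definition a different route, and yours is a sound one. Your forward implication is immediate and correct. Your converse in the compact case is the standard Stone--Weierstrass argument and is complete: the linear span of $\widehat{G}$ is indeed a unital, conjugation-closed subalgebra of $C(G)$ (since $\widehat{G}$ is a group and $\overline{\chi}=\chi^{-1}$), it separates points (elementary for $G=\az$ via the coordinate characters, and a consequence of Peter--Weyl in general), and the three-$\varepsilon$ estimate uses only that the $\mu_n$ are probability measures. Since $\az$ is compact and metrizable, this already covers every use of the theorem in the paper, including the cylinder formulation (indicators of cylinders are continuous). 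For the general locally compact case your outline --- tightness, Prokhorov, then uniqueness of the Fourier--Stieltjes transform --- is the standard route, but be aware that step (i) is not merely "the only analytic obstacle" in a rhetorical sense: the truncation inequality bounding $\mu_n(G\setminus K)$ by an average of $1-\mathrm{Re}\,\dual{\mu_n}$ over a neighborhood of the identity of $\widehat{G}$ is a genuinely $\R^n$-flavored estimate, and transporting it to an arbitrary LCA group requires the structure theorem (writing $G$ as $\R^n$ times a group with a compact open subgroup); your fallback of citing a standard reference for that case, as the paper itself does, is the right call. In short: the paper buys the statement off the shelf; you prove, cleanly and self-containedly, the only case it actually needs.
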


This theorem was first introduced in \cite{Levy} (in French). It has been extended to locally compact abelian groups in \cite{Varadhan}, and to more general settings which are out of the scope of this article.

\begin{definition}[Harmonically mixing measure]
$\mu \in \M(\az)$ is harmonically mixing if, for all $\varepsilon>0$, there exists $R>0$ such that $\rank(\chi)>R \implies \dual{\mu}[\chi]<\varepsilon$.
\end{definition}

Throughout this paper, we sometimes omit to specify the class of initial measures, which is always the class of harmonically mixing measures.

\begin{proposition}
Let $\A$ be any finite abelian group. Any Bernoulli or ($n$-step) Markov measure on $\az$ with nonzero parameters is harmonically mixing.
\end{proposition}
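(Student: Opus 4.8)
The plan is to treat the two families separately. For a Bernoulli measure $\mu=\otimes_\Z\beta$ with all $\beta_a>0$, I would use that by independence the Fourier coefficient of a character $\chi$ factorizes over its support $S=\support(\chi)$: $\dual\mu[\chi]=\prod_{k\in S}\bigl(\sum_{a\in\A}\beta_a\chi_k(a)\bigr)$. Each factor is a convex combination, with strictly positive weights summing to $1$, of the points $\chi_k(a)$ on the unit circle; it has modulus $1$ only if these points all coincide, which — since $\chi_k(0)=1$ and $0$ is the neutral element — would force $\chi_k=\mathbf 1$, contrary to $k\in S$. Because $\dual{\A}$ is finite, $\rho:=\max_{\psi\in\dual{\A}\setminus\{\mathbf 1\}}\bigl|\sum_a\beta_a\psi(a)\bigr|<1$, so $|\dual\mu[\chi]|\le\rho^{\rank(\chi)}$ (in fact a geometric harmonic decay), and $\mu$ is harmonically mixing.

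For a Markov measure I would first do the one‑step case: transition matrix $P=(p_{ij})$ with all entries positive, stationary vector $\pi$ (which, by $\sigma$‑invariance, is the common law of every coordinate). Writing $\support(\chi)=\{s_1<\cdots<s_r\}$, $d_i=s_{i+1}-s_i\ge1$ and $D_\psi=\mathrm{diag}(\psi(a))_{a\in\A}$, the Markov property yields the transfer‑matrix expression $\dual\mu[\chi]=\pi\,D_{\chi_{s_1}}P^{d_1}D_{\chi_{s_2}}P^{d_2}\cdots P^{d_{r-1}}D_{\chi_{s_r}}\,\mathbf 1$ with $\mathbf 1$ the all‑ones column. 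Set $\gamma^{(k)}=\pi D_{\chi_{s_1}}P^{d_1}\cdots P^{d_{k-1}}D_{\chi_{s_k}}$ (a row vector), so that $|\dual\mu[\chi]|=|\gamma^{(r)}\mathbf 1|\le\|\gamma^{(r)}\|_1$. Since each $P^{d}$ acts on row vectors as an $\ell^1$‑contraction and each $D_\psi$ is an $\ell^1$‑isometry, $\|\gamma^{(k)}\|_1$ is non‑increasing, starting from $\|\pi\|_1=1$.

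The core of the proof is that $\|\gamma^{(k)}\|_1\to0$, uniformly in $\chi$. For the pointwise statement I argue by contradiction: if $\|\gamma^{(k)}\|_1\downarrow c>0$, I extract a convergent subsequence $\gamma^{(k_\ell)}\to\gamma_*$ along which the next two gaps and characters are also constant (finitely many characters; and for the gaps either they stay bounded or a subsequence tends to $\infty$, with the convention $P^{\infty}:=\lim_{d\to\infty}P^d=\Pi$, the rank‑one matrix with all rows $\pi$, which exists since $P$ is primitive). Passing to the limit, $\|\gamma_*\|_1=\|\gamma_*P^{d}D_\psi\|_1=\|\gamma_*P^{d}\|_1$; the equality case of the triangle inequality, together with the positivity of $P^{d}$ (resp. of $\pi$ when $d=\infty$), forces $\gamma_*$ to be a nonnegative vector up to a global phase, hence $\gamma_*P^{d}$ to be strictly positive up to phase. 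One further limiting step gives that $(\gamma_*P^{d}D_\psi)P^{d'}$ again has $\ell^1$‑norm $c$; but $\gamma_*P^{d}D_\psi$ is a strictly positive vector twisted by $\psi\neq\mathbf 1$, which cannot be a phased nonnegative vector — contradicting the equality case once more. For uniformity I note that $\gamma^{(k)}$ depends only on the first $\sim k$ parameters $(\chi_{s_i},d_i)$, which live in the compact space $(\dual{\A}\setminus\{\mathbf 1\})^{\N}\times(\N\cup\{\infty\})^{\N}$; the maps $\gamma^{(k)}$ are continuous there and $\|\gamma^{(k)}\|_1$ decreases pointwise to $0$, so Dini's theorem gives uniform convergence, whence $\sup_{\rank(\chi)=r}|\dual\mu[\chi]|\to0$.

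Finally, the $n$‑step Markov case reduces to the one‑step case by the usual recoding on the alphabet $\A^n$ (the blocked transition matrix is primitive since all $(n+1)$‑block probabilities are positive), or can be run through the same argument directly. I expect the main obstacle to be exactly this uniformity in the Markov case: no single transfer operator $D_\psi P^{d}$ is a strict contraction in a fixed norm (its $\ell^1$ operator norm equals $1$), so the decay is only visible along long products and must be extracted through the Perron–Frobenius‑type rigidity of the equality case plus the compactness/Dini argument; checking that the various triangle‑inequality equalities really do force $\psi=\mathbf 1$ is where the hypothesis of nonzero parameters is used.
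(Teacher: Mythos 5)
The paper does not actually prove this proposition: it simply cites Propositions 6 and 8 and Corollary 10 of \cite{PivatoYassawi1}, so your argument is by necessity a different, self-contained route. Your Bernoulli argument is the standard one and coincides with what the paper does prove for nondegenerate Bernoulli measures in the $(ii)\Rightarrow(iii)$ step of Proposition~\ref{prop:chidiffusive}: factorization of $\dual\mu[\chi]$ over the support, strict convexity of the unit circle, finiteness of $\dual{\A}$, and the resulting geometric bound $\rho^{\rank(\chi)}$. For the one-step Markov case your transfer-matrix formula, the $\ell^1$-monotonicity of $\|\gamma^{(k)}\|_1$, and the equality-case rigidity (strict positivity of $P^{d}$ forces a phased-nonnegative vector, and then a nontrivial $D_\psi$ destroys that) are all correct, and the compactification of the gaps by $\N\cup\{\infty\}$ with $P^\infty=\Pi$ plus Dini's theorem is a legitimate, if soft and non-quantitative, way to extract the uniformity in $\rank(\chi)$ that harmonic mixing requires; the cited proof instead produces an explicit contraction rate. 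The one step you should tighten is the $n$-step reduction: with the overlapping (sliding-block) recoding your parenthetical suggests, the blocked transition matrix on $\A^n$ is only primitive, not positive --- it has forced zeros --- so the assertion that positivity of $P^{d}$ forces $\gamma_*$ to be phased-nonnegative fails as written for small $d$. The clean repair is to recode with \emph{non-overlapping} $n$-blocks: for an $n$-step Markov measure with nonzero parameters the resulting chain on the alphabet $\A^n$ has \emph{all} transition probabilities strictly positive, a character of $\az$ of rank $r$ induces a character of $(\A^n)^\Z$ of rank at least $r/n$, and your one-step argument then applies verbatim.
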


This is 
Propositions 6 and 8 and Corollary 10 in \cite{PivatoYassawi1}.

\section{Character-diffusivity}
\label{sec:chidiff}

Let $\chi \in \dual{\az}$ and let $F : \az \to \az$ be an abelian cellular automaton. Then $\chi \circ F$ is a character (composition of continuous group homomorphisms). One of the central ideas introduced in \cite{PivatoYassawi1} is to focus on the evolution of the rank of characters under the action of $F$ in order to establish randomization in density of harmonically mixing measures. They introduce the following notion of diffusivity over characters. We call it character-diffusivity to clearly distinguish it from the notion of diffusivity we will introduce later.

\newcommand\chidiffusive{character-diffusive }

\begin{definition}[Character diffusion]
  Let $F : \az\to\az$ be an abelian cellular automaton. We say $F$ \emph{strongly diffuses} a character $\chi \in \widehat{\az}$ if ${\rank(\chi\circ F^t) \to \infty}$, and \emph{diffuses $\chi$ in density} if if the convergence occurs along an increasing sequence of times of lower density one. We say $F$ is \emph{strongly \chidiffusive} if it strongly diffuses every nontrivial character, and define \emph{character-diffusivity in density} analogously.
\end{definition}

\begin{definition}
A measure $\mu$ is \emph{strongly nonuniform} if $\mu[\chi]\neq 0$ for all characters $\chi\in\widehat{\az}$.
\end{definition}

\begin{example}
  \label{ex:nonuniform}
A Bernoulli measure $\mu = \otimes \beta$ whose parameters are all equal except one is strongly nonuniform. Assume that $\beta(a) = c$ for all $a\in\A$ except for $\beta(a')\neq c$. Let $\chi_k$ be a nontrivial elementary character, i.e. a character of $\A$. 
\[\mu[\chi_k] = \sum_{a\in\A}\beta(a)\chi_k(a) = (\beta(a')-c)\chi_k(0) + c\sum_{a\in\A}\chi_k(a) = \beta(a')-c \neq 0\] by hypothesis.
It follows that for every character $\chi = \prod_k \chi_k$, we have $\mu[\chi] = \prod_k\mu[\chi_k] \neq 0$ where we are using the fact that $\mu$ is a Bernoulli measure.

An example of a non-strongly nonuniform measure is any measure of the form $\lambda \times \mu$ on $(\A\times \mathcal B)^\Z$, where $\lambda$ is the uniform Bernoulli measure on $\az$.
\end{example}

The following proposition completes Theorem~12 of \cite{PivatoYassawi1} by giving an equivalence between character-diffusivity and randomization. It also shows that randomization is a structural phenomenon, in the sense that it cannot happen on individual initial measures without happening on a large class.

\begin{definition}
A Bernoulli measure $\otimes \beta$ is \emph{nondegenerate} if the support of $\beta$ has nontrivial intersection with at least two cosets of every proper subgroup of $\A$.
\end{definition}

\begin{proposition}
\label{prop:chidiffusive}
Let $F$ be an abelian cellular automaton. The following are equivalent:
\begin{enumerate}[(i)]
\item $F$ is \chidiffusive;
\item $F$ randomizes the class of harmonically mixing measures;
\item $F$ randomizes the class of nondegenerate Bernoulli measures;
\item $F$ randomizes some strongly nonuniform Bernoulli measure.
\end{enumerate}
This equivalence holds for all three kinds of character-diffusivity and randomization, that is: strong character-diffusivity/randomization, character-diffusivity/randomization in density, and character-diffusivity for characters of support $\subset\U$/randomization on cylinders of support $\subset\U$ for any $\U\subset \Z$.
\end{proposition}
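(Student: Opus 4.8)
The plan is to prove the chain of implications $(i)\Rightarrow(ii)\Rightarrow(iii)\Rightarrow(iv)\Rightarrow(i)$, treating the three variants of convergence uniformly since the combinatorial core is insensitive to whether we follow the full sequence of times, a density-one subsequence, or restrict the supports to $\U$.

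\medskip

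\noindent\textbf{$(i)\Rightarrow(ii)$.} This is essentially Theorem~12 of \cite{PivatoYassawi1}, and I would recall its mechanism via L\'evy's continuity theorem: it suffices to show $\dual{F^t\mu}[\chi] = \dual\mu[\chi\circ F^t] \to 0$ for every nontrivial $\chi$, since $\dual\lambda[\chi]=0$ for $\chi\neq\mathbf 1$ and $\dual\lambda[\mathbf 1]=1$. Fix $\varepsilon>0$; harmonic mixing gives $R$ with $\rank(\psi)>R\Rightarrow|\dual\mu[\psi]|<\varepsilon$, and character-diffusivity gives that $\rank(\chi\circ F^t)>R$ for all large $t$ (resp. for $t$ in a density-one set, resp. using that $\chi\circ F^t$ still has support in $\U$ when $\chi$ does and $\U$ is $F$-suitable — here one must be slightly careful and restrict to $\U$ that make sense, as the paper does). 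Hence $|\dual{F^t\mu}[\chi]|<\varepsilon$ eventually (resp. in density), giving the desired convergence.

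\medskip

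\noindent\textbf{$(ii)\Rightarrow(iii)\Rightarrow(iv)$.} The first of these is immediate once we know nondegenerate Bernoulli measures are harmonically mixing, which follows from the Proposition quoted from \cite{PivatoYassawi1} (nonzero parameters suffice for harmonic mixing; nondegeneracy is an extra condition we simply carry along). For $(iii)\Rightarrow(iv)$ I would exhibit a single Bernoulli measure that is simultaneously nondegenerate and strongly nonuniform: Example~\ref{ex:nonuniform}'s measure $\otimes\beta$ with $\beta(a)=c$ for all $a$ except $\beta(a')\neq c$ is strongly nonuniform, and choosing the parameters generically (all nonzero, and $\beta(a')$ avoiding the finitely many bad values) makes it nondegenerate as well — its support is all of $\A$, which meets every coset of every proper subgroup. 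So $(iii)$ applied to this measure yields $(iv)$.

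\medskip

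\noindent\textbf{$(iv)\Rightarrow(i)$.} This is the crux and the step I expect to be the main obstacle. Suppose $F$ randomizes a strongly nonuniform Bernoulli measure $\mu$ but is \emph{not} character-diffusive: there is a nontrivial $\chi$ with $\rank(\chi\circ F^t)\not\to\infty$ (resp. not along a density-one set; resp. with $\chi$ of support $\subset\U$). Then some rank $r$ is attained infinitely often (resp. on a positive-upper-density set of times, resp. keeping supports in $\U$), so there are infinitely many $t$ with $\chi\circ F^t = \prod_{k\in S_t}\psi_k^{(t)}$, $|S_t|\le r$ and each $\psi_k^{(t)}$ a nontrivial elementary character. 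Because $F$ commutes with $\sigma$, we may translate so that $S_t\subset\{0,\dots,D\}$ for a fixed $D$ (bounded since $\rank$ is bounded and the width of $S_t$ is controlled — wait, this needs care: the support could be bounded in cardinality but spread out; here I would instead pass to a further subsequence along which the ``shape'' $S_t$, viewed up to translation, and the elementary characters $\psi_k^{(t)}\in\widehat\A$ are eventually \emph{constant}, using that there are only finitely many subsets of $\{0,\dots\}$ of size $\le r$ up to translation within any bounded window is false — so instead use that $\widehat\A$ is finite and that, after translating, the \emph{leftmost} coordinate is $0$; the remaining coordinates lie in a set of size $\le r-1$ but a priori unbounded). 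The clean way out: since $\mu$ is Bernoulli, $\dual\mu[\psi] = \prod_{k\in\support(\psi)}\dual\beta[\psi_k]$ with each factor a complex number of modulus $\le 1$, and strong nonuniformity says each factor is nonzero, but that alone does not bound the product away from $0$ as the rank varies — however the rank here is \emph{bounded} by $r$, so $|\dual\mu[\chi\circ F^t]|\ge \delta^r>0$ where $\delta=\min_{\psi\in\widehat\A\setminus\{\mathbf 1\}}|\dual\beta[\psi]|>0$. Thus $\dual{F^t\mu}[\chi] = \dual\mu[\chi\circ F^t]$ stays bounded away from $0$ along infinitely many $t$ (resp. along a positive-upper-density set), so by L\'evy's theorem $F^t\mu\not\to\lambda$ (resp. not in density, resp. not on cylinders of support $\subset\U$ since $\chi$ has such support), contradicting $(iv)$. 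The one genuinely delicate point is matching up the three regimes in this last argument — in particular, for the density version one invokes the contrapositive packaging already used in the proof of the earlier Proposition (a positive-upper-density set of ``bad'' times obstructs density convergence), and for the support-$\U$ version one checks that $\chi\circ F^t$ indeed has support inside $\U$ whenever $\chi$ does, which holds for the $\U$ the statement intends (e.g. $\U$ a union of intervals closed under the relevant shifts, or one simply restricts attention to $\U=\Z$ and $\U$ finite as in the fixed-length sections).
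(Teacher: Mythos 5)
Your overall route ($(i)\Rightarrow(ii)\Rightarrow(iii)\Rightarrow(iv)\Rightarrow(i)$) and the substance of the steps $(i)\Rightarrow(ii)$, $(iii)\Rightarrow(iv)$ and $(iv)\Rightarrow(i)$ coincide with the paper's proof; in particular your ``clean way out'' for $(iv)\Rightarrow(i)$ --- bounded rank $r$ plus $\delta=\min_{\psi\in\widehat\A\setminus\{\mathbf 1\}}|\dual\beta[\psi]|>0$ giving $|\dual\mu[\chi\circ F^t]|\geq\delta^r$ along the bad subsequence --- is exactly the paper's argument, and the detour about shapes and translations that you correctly abandon is unnecessary.

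There is, however, a genuine gap in $(ii)\Rightarrow(iii)$. You justify it by citing the proposition from \cite{PivatoYassawi1} that Bernoulli measures with \emph{nonzero parameters} are harmonically mixing, and you describe nondegeneracy as ``an extra condition we simply carry along.'' But a nondegenerate Bernoulli measure need not have full support: the definition only requires the support of $\beta$ to meet at least two cosets of every proper subgroup. For instance, on $\A=\Z/4\Z$ the Bernoulli measure supported on $\{0,1\}$ is nondegenerate yet has two zero parameters, so the cited proposition does not apply to it and your argument only covers the full-support subclass. The paper closes this gap with a direct argument: for a nontrivial elementary character $\chi_0$ with kernel $K<\A$, nondegeneracy provides $a,b$ in the support of $\beta$ with $aK\neq bK$, hence $\chi_0(a)\neq\chi_0(b)$, so $\sum_a\chi_0(a)\beta(a)$ is a nontrivial convex combination of points on the unit circle and $|\mu[\chi_0]|<1$ by strict convexity; taking the maximum $m<1$ over the finitely many nontrivial elementary characters gives $|\mu[\chi]|\leq m^{\rank(\chi)}$ and harmonic mixing. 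You need some such argument here. A smaller point: in the support-$\subset\U$ variant (both in $(i)\Rightarrow(ii)$ and $(iv)\Rightarrow(i)$) you worry about whether $\chi\circ F^t$ retains support in $\U$; this is neither needed nor generally true. The hypothesis constrains the support of the \emph{initial} character $\chi$ only, and one then applies L\'evy's theorem to $\A^{\U}$ via the bijection between characters of support $\subset\U$ and $\widehat{\A^{\U}}$, with no restriction on $\U$.
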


\begin{proof}\paragraph{$(i)\Rightarrow (ii)$: }Assume $F$ is strongly \chidiffusive. Let $\mu$ be a harmonically mixing measure and $\chi$ any nontrivial character of $\az$. Since $F$ is strongly \chidiffusive, $\rank(\chi\circ F^t) \underset {t\to\infty}\longrightarrow \infty$. Since $\mu$ is harmonically mixing, it follows that $F^t\mu[\chi] = \mu[\chi\circ F^t] \underset {t\to\infty}\longrightarrow 0 = \lambda[\chi]$. Since this is true for any character $\chi$, we have by L\'evy's continuity theorem that $F^t\mu \underset {t\to\infty}\longrightarrow \lambda$.

For randomization in density, do the same proof, where each convergence is taken along a subsequence of upper density $1$. 

For randomization for characters of support $\subset \U$, the same argument shows that $F^t\mu[\chi] = \mu[\chi\circ F^t] \underset {t\to\infty}\longrightarrow 0 = \lambda[\chi]$ for any nontrivial character $\chi$ with support in $\U$. There is a bijection between the characters of support $\subset \U$ and $\widehat{\A^{\U}}$, and a conditional measure $\mu(\cdot\ |\ \B_\U)$ can be seen as a measure of $\M(\A^\U)$. Applying L\'evy's continuity theorem to $\A^{\U}$, it follows that $F^t\mu(\cdot\ |\ \B_\U) \to \lambda(\cdot\ |\ \B_\U)$.

\paragraph{$(ii)\Rightarrow (iii)$: } We prove that any nondegenerate Bernoulli measure $\mu = \otimes \beta$ is harmonically mixing. First note that for any elementary character $\chi_0 \neq \textbf{1}$, we have:
\begin{align*}
\mu[\chi_0] = \int_{\az}\chi d\mu = \sum_{a\in\A}\chi_0(a)\beta(a).
\end{align*}

We claim that there exist $a, b \in \A$ such that $\beta(a), \beta(b) > 0$ and $\chi_0(a) \neq \chi_0(b)$. To see this, let $K \leq \A$ be the kernel of $\chi_0$. Since $\chi_0 \neq \textbf{1}$, $\chi_0(g) \neq 1$ for some $g \in \A$, and thus $K < \A$. Then by the assumption that $\beta$ is nondegenerate, there exist $a, b$ in the support of $\beta$ such that $aK \neq bK$. Thus $\beta(a), \beta(b) > 0$ and $\chi_0(a) \neq \chi_0(b)$.

Now the existence of $a, b$ shows that $\sum_{a\in\A}\chi_0(a)\beta(a)$ is a non-trivial convex combination of points on the unit circle, so by the strict convexity of the unit circle, $\mu[\chi_0]$ is a non-extremal point of the unit disk, that is to say $|\mu[\chi_0]|<1$.

Define $m = \max\{|\mu[\chi_0]|\ :\ \chi_0\in\widehat \A\backslash \textbf 1\} < 1$. For any character $\chi = \prod_{i\in\Z}\chi_i$,
\[|\mu[\chi]| = \left|\prod_{i\in\Z}\mu[\chi_i]\right| \leq m^{\rank(\chi)},  \]
where the first equality comes from the fact that $\mu$ is a Bernoulli measure. This implies that $\mu$ is harmonically mixing.

\paragraph{$(iii)\Rightarrow (iv)$: } See e.g. the first measure in Example~\ref{ex:nonuniform}.

\paragraph{$(iv)\Rightarrow (i)$: } Assume $F$ is not strongly \chidiffusive and take a character $\chi\neq \textbf{1}$ such that $\rank(F^t\circ \chi) \nrightarrow \infty$. This means that there exists $C\in\N$ and a subsequence $\varphi$ such that $\rank(\chi\circ F^{\varphi(t)}) \leq C$. Since there is a finite number of elementary characters, we have for any strongly nonuniform Bernoulli measure $\mu$: \[|F^{\varphi(t)}\mu[\chi]| \geq m^C \quad \text{where} \quad m = \min\{|\mu[\chi_0]| : \chi_0\in\dual{A}\} >0.\] Therefore $F^t\mu[\chi]\nrightarrow 0$, which implies that $F^t\mu \nrightarrow \lambda$. We conclude by contraposition.

For randomization in density, do the same proof along a sequence of times with positive upper density.

For randomization of cylinders of support $\subset \U$, we can do the same proof for any character of support $\subset \U$.
\end{proof}

\begin{remark}
  The strongly nonuniform hypothesis is necessary to prevent the following kind of counterexample: take $\A = (\Z/2\Z)^2$, $F' = F\times Id$ where $F$ is a strongly randomizing cellular automaton on $(\Z/2\Z)^\Z$ (such as $F_2$, as we prove later) and $\mu = \nu\times\lambda$ where $\nu$ is any harmonically mixing measure and $\lambda$ is the uniform Bernoulli measure on $(\Z/2\Z)^\Z$. Then $F'$ strongly randomizes $\mu$, but does not strongly randomize any measure whose second component is nonuniform. 
\end{remark}

\begin{definition}[Dependency function]
  To any abelian CA $F : \az\to\az$ we associate a \emph{dependency function}:
  \[\forall (t,i)\in\N\times\Z,\ \Delta_F(t,i) =
  \left\{ \begin{array}{ccc}\A&\to&\A\\ q&\mapsto& F^t(x_q)_i\end{array}\right.\]
where $x_q$ is the configuration worth $q$ at position $0$ and $0$ everywhere else.
\end{definition}

Notice that by linearity, we have:
\begin{equation}\forall x\in\az,\ \forall t\in\N,\ F^t(x)_z = \sum_{j\in\Z} \Delta_F(t,z-j)(x_j),\label{eq:dependency}\end{equation}
where only a finite number of terms are nonzero.

In the following lemma, we prove that the support of the image of a fixed character at time $t$ is entirely determined by the local dependency diagram.

\begin{lemma}
  \label{lem:localrank}
  Let $F$ be an abelian cellular automaton, and $\chi$ be a character whose support is included in $[0,m]$ for ${m\geq 0}$. If there are $(t_1,z_1)$ and $(t_2,z_2)$ such that:
  \[\forall z\in[0,m],\ \Delta_F(t_1,z_1+z)=\Delta_F(t_2,z_2+z)\]
  then 
  \[-z_1\in\support(\chi\circ F^{t_1}) \Leftrightarrow -z_2\in\support(\chi\circ F^{t_2})\]
\end{lemma}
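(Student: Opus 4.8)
The plan is to unwind the definitions of $\support(\chi\circ F^{t})$ and the dependency function, and observe that whether a given position $-z_1$ lies in the support of $\chi\circ F^{t_1}$ depends only on the finitely many maps $\Delta_F(t_1, z_1), \Delta_F(t_1, z_1+1), \dots, \Delta_F(t_1, z_1+m)$. Since by hypothesis these coincide with $\Delta_F(t_2, z_2), \dots, \Delta_F(t_2, z_2+m)$, the two membership statements are equivalent.

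First I would compute $(\chi\circ F^{t})(x)$ for a configuration $x$ supported on a single cell, say $x = x_q$ placed at position $j$ (the configuration worth $q$ at position $j$ and $0$ elsewhere). Writing $\chi = \prod_{k\in\support(\chi)}\chi_k$ with $\support(\chi)\subseteq[0,m]$, and using the dependency identity~\eqref{eq:dependency}, we have $F^{t}(x_q\text{ at }j)_k = \Delta_F(t, k-j)(q)$, so
\[
(\chi\circ F^{t})(x_q\text{ at }j) = \prod_{k=0}^{m} \chi_k\bigl(\Delta_F(t, k-j)(q)\bigr).
\]
Now, position $-z$ is \emph{not} in $\support(\chi\circ F^{t})$ precisely when the elementary character of $\chi\circ F^{t}$ at coordinate $-z$ is trivial, i.e. when $q\mapsto (\chi\circ F^{t})(x_q\text{ at }{-z})$ is the trivial character of $\A$; by the displayed formula with $j = -z$, this is the map $q\mapsto \prod_{k=0}^{m}\chi_k\bigl(\Delta_F(t, k+z)(q)\bigr)$. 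This map depends on $t$ and $z$ only through the tuple $\bigl(\Delta_F(t, z), \Delta_F(t, z+1), \dots, \Delta_F(t, z+m)\bigr)$.

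To finish, I would apply this with $(t,z) = (t_1, z_1)$ and $(t,z) = (t_2, z_2)$: the hypothesis $\Delta_F(t_1, z_1+z) = \Delta_F(t_2, z_2+z)$ for all $z\in[0,m]$ says exactly that the two tuples coincide, hence the two elementary characters (at coordinate $-z_1$ of $\chi\circ F^{t_1}$ and at coordinate $-z_2$ of $\chi\circ F^{t_2}$) are the same character of $\A$. In particular one is trivial iff the other is, which is the claimed equivalence $-z_1\in\support(\chi\circ F^{t_1}) \Leftrightarrow -z_2\in\support(\chi\circ F^{t_2})$.

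The only mild subtlety — and the step I would be most careful about — is the bookkeeping with shifts and signs: one must check that the elementary character of $\chi\circ F$ at coordinate $p$ is indeed governed by the maps $\Delta_F(\,\cdot\,, k-p)$ for $k$ ranging over $\support(\chi)$, which is why the condition is phrased as agreement of $\Delta_F$ on a window of width $m+1$ translated by $z_1$ resp. $z_2$. Once the indices are aligned correctly, everything reduces to the observation that a character is determined by its values, and no further computation is needed; in particular the argument is purely about the local dependency diagram and does not use harmonic mixing or any global property of $\chi$ beyond the location of its support.
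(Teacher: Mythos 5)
Your proposal is correct and follows essentially the same route as the paper: both identify the elementary character of $\chi\circ F^{t}$ at coordinate $-z$ as $q\mapsto\prod_{k=0}^{m}\chi_k\bigl(\Delta_F(t,z+k)(q)\bigr)$, which visibly depends only on the window $\Delta_F(t,z),\dots,\Delta_F(t,z+m)$, and then invoke the hypothesis to conclude. The only cosmetic difference is that you extract this elementary character by evaluating on single-cell test configurations, whereas the paper reindexes the full product $\prod_{i}\prod_{j}\chi_i\circ\Delta_F(t,i-j)(x_j)$; the index bookkeeping you flag as the delicate point comes out the same either way.
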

\begin{proof}
By Equation~\ref{eq:dependency},
\begin{align*}\chi\circ F^t(x) &= \prod_{i\in\Z}\chi_i\left(\sum_{j\in\Z}\Delta_F (t,i-j) (x_j)\right)\\
&= \prod_{i\in\Z}\prod_{j\in\Z}\chi_i\circ\Delta_F (t,i-j) (x_j)\\
&= \prod_{j\in\Z}\left(\prod_{k\in\Z}\chi_{k+j}\circ\Delta_F (t,k)\right) (x_j)
\end{align*}
where the last step is obtained by rewriting the sum: $k=i-j$. It follows: 
\begin{align*}-z_1\in\support(\chi\circ F^{t_1}) &\Leftrightarrow \prod_{k\in\Z}\chi_{k-z_1}\circ\Delta_F (t_1,k) \neq 0\\
&\Leftrightarrow \prod_{k \in \Z} \chi_{k-z_2}\circ\Delta_F (t_2,k) \neq 0\\
&\Leftrightarrow -z_2\in\support(\chi\circ F^{t_2})
\end{align*}
where the second step uses the hypothesis of the lemma and the fact that $\chi_{k-z_1}=0$ whenever $k-z_1\notin[0,m]$.
\end{proof}

Given an abelian CA $F$ and ${t\in\N}$, denote by $d(t)$ the number of non-trivial dependencies of $F$ at time $t$:
\[d(t) = \bigl|\{z\in\Z : \Delta_F(t,z) \neq 0\}\bigr| \quad \text{(the zero map)}\]

Following \cite{PivatoYassawi1,PivatoYassawi2}, we introduce \emph{isolated bijective dependencies} which provide useful lower-bounds on the rank of image of characters under the action of $F$.

\begin{definition}[Isolated dependency]
For ${k\geq 1}$, a $k$-isolated dependency is a pair ${(t,z)\in\N\times\Z}$ such that:
\begin{enumerate}
\item ${\Delta_F(t,z)}$ is a bijective dependency;
\item ${\Delta_F(t,z+i)} = 0$ for ${1 \leq i\leq k}$.
\end{enumerate}
We denote by ${S_k(t)}$ the set of $k$-isolated dependencies at time $t$ (\textit{i.e.} of the form ${(t,z)}$) and ${s_k(t)=|S_k(t)|}$.
\end{definition}
This concept of $k$-isolated dependencies is also used in \cite{pivato2006} to define dispersion mixing measure and dispersive CA.
The main techniques of \cite{PivatoYassawi1} (proof of Therorem 15) and \cite{PivatoYassawi2} ($V$-separating sets) is essentially to use $s_k$ as a lower bound for the rank of characters under the iteration of an abelian CA. 
\begin{proposition}
  \label{prop:rankdeps}
  Let $F$ be an abelian cellular automaton, $\chi\neq \textbf{1}$ a
  character, and $k$ the diameter of $\support(\chi)$. Then we have:
  \[\forall t\in\N,\ s_{k-1}(t)\leq \rank(\chi\circ F^t)\leq \rank(\chi)\cdot d(t)\]
\end{proposition}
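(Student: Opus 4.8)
The plan is to derive both inequalities directly from the factorization of $\chi\circ F^t$ into elementary characters that already appears in the proof of Lemma~\ref{lem:localrank}:
\[\chi\circ F^t(x) = \prod_{j\in\Z}\Bigl(\prod_{l\in\Z}\chi_{l+j}\circ\Delta_F(t,l)\Bigr)(x_j),\]
so that the $j$-th elementary character of $\chi\circ F^t$ is $c_j := \prod_{l\in\Z}\chi_{l+j}\circ\Delta_F(t,l)$, which after the substitution $i=l+j$ reads $c_j=\prod_{i\in\Z}\chi_i\circ\Delta_F(t,i-j)$. The basic observation I would record first is that a factor $\chi_i\circ\Delta_F(t,i-j)$ is the trivial character of $\A$ as soon as $\chi_i=\mathbf 1$ or $\Delta_F(t,i-j)=0$; hence $c_j\neq\mathbf 1$ forces some index $i$ to satisfy $i\in\support(\chi)$ \emph{and} $\Delta_F(t,i-j)\neq 0$ simultaneously.

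For the upper bound this gives immediately $\support(\chi\circ F^t)\subseteq\{\,i-l : i\in\support(\chi),\ \Delta_F(t,l)\neq 0\,\}$, a set of cardinality at most $\rank(\chi)\cdot d(t)$, whence $\rank(\chi\circ F^t)\le\rank(\chi)\cdot d(t)$.

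For the lower bound I would first normalize $\chi$: translating $\chi$ by a power of $\sigma$ changes neither $\rank(\chi\circ F^t)$ (because $F^t$ commutes with $\sigma$, so $(\chi\circ\sigma^n)\circ F^t=(\chi\circ F^t)\circ\sigma^n$ and rank is shift-invariant) nor $s_{k-1}(t)$ (which does not involve $\chi$), so I may assume $\support(\chi)\subseteq[0,k-1]$ with $\chi_0\neq\mathbf 1$, where $k$ denotes the diameter of the support read as the length of the smallest block of consecutive cells containing it. Then, given a $(k-1)$-isolated dependency $(t,z)\in S_{k-1}(t)$, I would inspect the elementary character $c_{-z}=\prod_{i\in\Z}\chi_i\circ\Delta_F(t,i+z)$: for every $i\in\support(\chi)\setminus\{0\}\subseteq\{1,\dots,k-1\}$ the isolation condition gives $\Delta_F(t,z+i)=0$, killing that factor, so the only surviving factor is $\chi_0\circ\Delta_F(t,z)$, which is a nontrivial character of $\A$ since $\chi_0\neq\mathbf 1$ and $\Delta_F(t,z)$ is an automorphism of $\A$. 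Thus $-z\in\support(\chi\circ F^t)$, and as $z\mapsto -z$ is injective, distinct $(k-1)$-isolated dependencies at time $t$ produce distinct elements of $\support(\chi\circ F^t)$; this yields $s_{k-1}(t)\le\rank(\chi\circ F^t)$.

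I do not expect a genuine obstacle: all the analytic content is packaged in Lemma~\ref{lem:localrank}. The only points deserving care are the shift-normalization of $\chi$ and making the ``diameter'' convention precise, so that the $k-1$ right-neighbours cleared by a $(k-1)$-isolated dependency exactly exhaust the part of the support lying to the right of its left end.
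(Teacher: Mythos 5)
Your proposal is correct and follows essentially the same route as the paper: the same factorization of $\chi\circ F^t$ into elementary characters, the same counting argument for the upper bound, and the same observation that a $(k-1)$-isolated bijective dependency leaves only the surviving factor $\chi_0\circ\Delta_F(t,z)$ for the lower bound. The only difference is that you spell out the shift-normalization and the diameter convention, which the paper leaves implicit.
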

\begin{proof}
The rank being invariant by translation, we can suppose that $\support(\chi)\subset {[0,k-1]}$ and that $\chi_0\neq \textbf{1}$. By Equation~\ref{eq:dependency}, we have:
\[\chi\circ F^t(x) = \prod_{i=0}^{k-1}\prod_{j\in\Z}\chi_i\circ\Delta_F (t,i-j) (x_j) = \prod_{j\in\Z}\left(\prod_{i=0}^{k-1}\chi_i\circ\Delta_F (t,i-j)\right) (x_j).\]
First, we have 
\[j\in\support(\chi\circ F^t) \Rightarrow \exists i, \chi_i\circ\Delta_F (t,i-j) \neq 1 \Rightarrow \exists i\in \support(\chi), \Delta_F (t,i-j)\neq 0.\]
Therefore we get the upper bound $\rank(\chi\circ F^t)\leq \rank(\chi)\cdot d(t)$.

Second, if ${(t,-j)}$ is $k-1$-isolated, then $j\in\support({\chi\circ F^t})$. Indeed:
\[(\chi\circ F^t)_j = \prod_{i=0}^{k-1}\chi_i\circ\Delta_F (t,i-j) = \chi_0\circ\Delta_F(t,-j).\]
We deduce that ${s_{k-1}(t)\leq \rank(\chi\circ F^t)}$.
\end{proof}

\begin{example}
  By Propositions~\ref{prop:chidiffusive} and~\ref{prop:rankdeps}, having $s_k(t)\longrightarrow_t+\infty$ for all $k$ is a sufficient condition for randomization, but it is not necessary. For instance, take $F' = F\times(\sigma^N\circ F)$ where $F$ is any abelian CA that randomizes in density. By Corollary~\ref{coro:randomizingstability} below, $F'$ is randomizing in density. However, when $N$ is large enough that $F$ and ${\sigma^N\circ F}$ have disjoint neighborhoods, $F'$ has no bijective dependency, and therefore $s_k(t)=0$ for all $k$ and $t$.

  On the other hand, having many bijective dependencies is not enough if they are not well-isolated. For example, one can check that $H_2$ satisfies ${s_1(t)=t-2}$ but it is not randomizing, as shown in Section~\ref{sec:mainthm}.
\end{example}

\section{Duality, Diffusivity and Solitons}
\label{sec:dualitysolitons}

In the last section, we saw that the iterated images of characters under the action of a CA is key to understanding its action on probability measures. It turns out that the action of abelian CA on characters can be seen as a CA on the dual group, and that furthermore this dual CA shares many properties with the original CA. 

Remember that any character $\chi$ of $\dual{G^\Z}$ can be written as a finite product of cellwise (elementary) characters: ${\chi(x)=\prod_{z\in \Ne}\chi_z(x_z)}$ for some finite set $\Ne\subset \Z$ and $\chi_z\in\dual{G}$. To such a $\chi$ we associate $\confcar{\chi}$, the configuration of $\dual{G}^\Z$ defined by:
\[\confcar{\chi}(z) =
\begin{cases}
  \chi_z &\text{ if }z\in \Ne\\
  1 &\text{ else.}
\end{cases}
\]

Note that ${\confcar{\dual{G^\Z}}}$ is exactly the set of finite configurations of $\dual{G}^\Z$.

\begin{definition}[Dual CA]Let $F$ be an abelian CA over ${G^\Z}$. It can be written as
\[F(x)_z = \sum_{i\in \Ne} \alfaendo_i(x_{z+i})\] where ${\Ne\subset\Z}$ is finite and $\alfaendo_i$ are endomorphisms of $G$. We define $\dual{F}$ over the finite configurations of $\dual{G}^\Z$ by: 
\[\dual{F}(\confcar{\chi}) = \confcar{\chi\circ F}.\]
Since $\dual{F}$ is uniformly continuous and shift-invariant on finite configurations, it can be extended by continuity to a cellular automaton ${\dual{G}^\Z}\to {\dual{G}^\Z}$; this is the dual CA of $F$, and is an abelian CA for the group ${(\dual{G},\times)}$.  \end{definition}

More concretely, if ${\chi(x)=\prod_{z\in A}\chi_z(x_z)}$, we have:
\[\dual{F}(\confcar{\chi}) = 
\begin{cases}
  x_z\mapsto \prod_{i\in\Ne}\chi_{z-i}\bigl(\alfaendo_i(x_z)\bigr) &\text{ if }z\in A+\Ne\\
  1 &\text{ else.}
\end{cases}
\]

Then, for any ${c\in\dual{G}^\Z}$ we can define:
\begin{equation}
\dual{F}(c)_z = \prod_{i\in \Ne}\gamma_i(c_{z-i})\label{eq:defdual}
\end{equation}
where $\gamma_i$ is the endomorphism of $\dual{G}$ defined by:
\[\gamma_i(\chi) = g\mapsto \chi\circ\alfaendo_i(g).\]

When ${G=\F_p^d}$, the dual of a CA is obtained (up to conjugacy) by applying a mirror operation and transposing the matrix corresponding to each coefficient. Indeed, the map: 
\[\begin{array}{rcl}G&\to&\dual{G}\\a &\mapsto& \chi_a\end{array}\qquad \text{where}\qquad \chi_a:b\in\A \mapsto e^{\frac{2i\pi}{p}<a,b>},\]
where $<a,b>$ denotes the scalar product of $a$ and $b$ seen as $d$-dimensional vectors, is an isomorphism. Through that isomorphism, we have that ${\chi_a\circ M = \chi_{M^ta}}$ for any endomorphism $M : \A\to\A$, and the result comes from Equation (\ref{eq:defdual}).

In particular, our examples $F_2$ and $H_2$ are flip conjugate to their own dual since all their coefficients are symmetric matrices. $H_2$ is actually conjugate to its dual since it is left-right symmetric.

We do not know whether an abelian CA $F$ is always flip conjugate to its dual $\dual{F}$; however, we show in the remainder of this section that they are dynamically close enough that properties like randomization or diffusion are preserved by duality.

\begin{lemma}
  \label{lem:dualcompose}
  Let $\Phi_1$ and $\Phi_2$ be two abelian CA over ${G^\Z}$. Then we have ${\dual{\Phi_1\circ \Phi_2} = \dual{\Phi_2}\circ\dual{\Phi_1}}$. As a consequence:
  \begin{itemize}
  \item ${\dual{F^t} = \bigl(\dual{F}\bigr)^t}$ for any $t>0$,
  \item ${\dual{F\circ\sigma} = \dual{F}\circ\sigma^{-1}}$,
  \item If $F$ is reversible, then $\dual{F}$ is also reversible and ${\dual{F^{-1}} = \bigl(\dual{F}\bigr)^{-1}}$.
  \end{itemize}
  Furthermore, $\dual{\dual{F}} = F$ up to a canonical isomorphism.
\end{lemma}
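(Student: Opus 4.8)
The plan is to prove the composition identity $\dual{\Phi_1\circ\Phi_2}=\dual{\Phi_2}\circ\dual{\Phi_1}$ first, and then read off every stated consequence by routine manipulation. For the identity itself: $\Phi_1\circ\Phi_2$ is again an abelian CA, so $\dual{\Phi_1\circ\Phi_2}$ is defined; both sides of the asserted equality are cellular automata on $\dual{G}^\Z$, hence continuous, so it suffices to check them on the dense set of finite configurations. Every finite configuration of $\dual{G}^\Z$ is $\confcar{\chi}$ for a unique character $\chi$ of $G^\Z$. Since $\chi\circ\Phi_1$ is again a character of $G^\Z$ (a composition of continuous homomorphisms), it has finite support, so $\dual{\Phi_1}(\confcar{\chi})=\confcar{\chi\circ\Phi_1}$ is a finite configuration to which the defining formula for $\dual{\Phi_2}$ applies, and then
\[\dual{\Phi_2}\bigl(\dual{\Phi_1}(\confcar{\chi})\bigr)=\dual{\Phi_2}\bigl(\confcar{\chi\circ\Phi_1}\bigr)=\confcar{(\chi\circ\Phi_1)\circ\Phi_2}=\confcar{\chi\circ(\Phi_1\circ\Phi_2)}=\dual{\Phi_1\circ\Phi_2}(\confcar{\chi}),\]
using only associativity of composition.

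From here the three consequences are immediate. First, $\dual{F^t}=(\dual{F})^t$ follows by induction on $t$, the step being $\dual{F^{t+1}}=\dual{F\circ F^t}=\dual{F^t}\circ\dual{F}=(\dual{F})^t\circ\dual{F}$. Second, for $\dual{F\circ\sigma}$ I would first record that $\sigma$ is an abelian CA with $\dual{\sigma}=\sigma^{-1}$ — immediate either from (\ref{eq:defdual}) with neighborhood $\{1\}$ and identity coefficient, or from the direct check $\confcar{\chi\circ\sigma}=\sigma^{-1}\confcar{\chi}$ — whence $\dual{F\circ\sigma}=\dual{\sigma}\circ\dual{F}=\sigma^{-1}\circ\dual{F}=\dual{F}\circ\sigma^{-1}$, the last step because $\dual{F}$ is a CA and so commutes with the shift. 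Third, if $F$ is reversible then $F^{-1}$ is again a CA (reversibility of CA) and a group automorphism, hence an abelian CA; since $\chi\circ\mathrm{id}=\chi$ gives $\dual{\mathrm{id}}=\mathrm{id}$, applying the composition identity to $F\circ F^{-1}=F^{-1}\circ F=\mathrm{id}$ yields $\dual{F^{-1}}\circ\dual{F}=\dual{F}\circ\dual{F^{-1}}=\mathrm{id}$, so $\dual{F}$ is reversible with inverse $\dual{F^{-1}}$.

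For $\dual{\dual{F}}=F$ up to a canonical isomorphism I see two routes. The abstract one: the canonical isomorphism $G\cong\dual{\dual{G}}$, $g\mapsto(\chi\mapsto\chi(g))$, extends cellwise to $\iota:G^\Z\to\dual{\dual{G}}^\Z$; one checks that $\confcar{\cdot}$ conjugates $\dual{F}$ on finite configurations to the Pontryagin-dual map $\chi\mapsto\chi\circ F$ of $F$, and applying this twice together with Pontryagin biduality gives $\dual{\dual{F}}=\iota\circ F\circ\iota^{-1}$. The computational one: writing $F(x)_z=\sum_{i\in\Ne}\alfaendo_i(x_{z+i})$, formula (\ref{eq:defdual}) exhibits $\dual{F}$ with neighborhood $-\Ne$ and coefficients $\gamma_i(\chi)=\chi\circ\alfaendo_i$; applying (\ref{eq:defdual}) once more gives $\dual{\dual{F}}(d)_z=\prod_{i\in\Ne}\delta_{-i}(d_{z+i})$ with $\delta_{-i}(\psi)=\psi\circ\gamma_i$, and the pointwise check $\delta_{-i}(\iota(g))(\chi)=\iota(g)(\chi\circ\alfaendo_i)=\chi(\alfaendo_i(g))=\iota(\alfaendo_i(g))(\chi)$ shows $\delta_{-i}\circ\iota=\iota\circ\alfaendo_i$, so $\prod_i\delta_{-i}(\iota(x_{z+i}))=\iota\bigl(\sum_i\alfaendo_i(x_{z+i})\bigr)=\iota(F(x)_z)$ because $\iota$ is a homomorphism.

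The only genuinely delicate point is this last part: one must keep careful track of how the neighborhood reverses under dualization and verify that the identity $\dual{\dual{F}}=F$ is witnessed by precisely the canonical evaluation pairing, rather than by some ad hoc isomorphism $G\cong\dual{G}$ (which would only yield flip-conjugacy, as in the discussion of $F_2$ and $H_2$). Everything else is bookkeeping.
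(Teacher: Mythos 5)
Your proof is correct and follows essentially the same route as the paper: the composition identity is verified on the dense set of finite configurations $\confcar{\chi}$ by unwinding the definition $\dual{F}(\confcar{\chi})=\confcar{\chi\circ F}$, and biduality is established via the canonical evaluation isomorphism $g\mapsto(\chi\mapsto\chi(g))$. You additionally spell out the three listed consequences and give a second, coordinate-level verification of $\dual{\dual{F}}=F$, both of which the paper leaves implicit but which match its intent.
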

\begin{proof}
  By definition of dual CAs, we have for any ${\chi\in\dual{G^\Z}}$:
  \[\dual{\Phi_1\circ \Phi_2}(\confcar{\chi}) = \confcar{\chi\circ \Phi_1\circ \Phi_2} = \dual{\Phi_2}(\confcar{\chi\circ \Phi_1}) = \dual{\Phi_2}\circ\dual{\Phi_1}(\confcar{\chi}).\]
  Since ${\confcar{\dual{G^\Z}}}$ is dense in ${\dual{G}^\Z}$ we deduce that ${\dual{\Phi_1\circ \Phi_2} = \dual{\Phi_2}\circ\dual{\Phi_1}}$ on the whole space.

  For the last point, it is well-known that $\dual{\dual{G^\Z}} \simeq G^\Z$ through the canonical isomorphism $\psi:g\mapsto(\chi\mapsto \chi(g))$ (see e.g. \cite{Deitmar}, Lemma 4.1.4). Then we check that $\dual{\dual{F}} : \psi(g) \mapsto \psi(g) \circ \dual{F} = \psi(F(g))$, so that $\dual{\dual{F}}\simeq F$ up to this isomorphism. 
\end{proof}

In the following two technical lemmas and in the remainder of the section, we stress when our results do not require the CA to be abelian, even though we will only apply them to abelian CA.

\begin{lemma}
  \label{lem:rankincrease}
  Let $F$ be a CA with quiescent state $0$. Then for any finite configuration $x$, $\rank(F(x)) \leq |\Ne|\cdot\rank(x)$.

  In particular, if $F$ is abelian and $\chi$ is a character , $\rank(\chi\circ F)\leq |\Ne|\cdot\rank(\chi)$.
\end{lemma}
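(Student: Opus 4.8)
The statement to prove is Lemma~\ref{lem:rankincrease}: for a CA $F$ with quiescent state $0$ and finite configuration $x$, one has $\rank(F(x)) \leq |\Ne| \cdot \rank(x)$, and (in the abelian case) $\rank(\chi \circ F) \leq |\Ne| \cdot \rank(\chi)$.

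The plan is to argue directly from the locality of the rule together with the fact that $0$ is quiescent. First I would recall that $F(x)_i = f(x_{i+\Ne})$ where $f : \A^\Ne \to \A$ is the local rule, and that $f(0^\Ne) = 0$ since $0$ is quiescent. The key observation is: if $i \in \support(F(x))$, i.e. $F(x)_i \neq 0$, then the window $x_{i+\Ne}$ cannot be all zeros (otherwise $F(x)_i = f(0^\Ne) = 0$), so there must be some $j \in \Ne$ with $x_{i+j} \neq 0$, that is, $i + j \in \support(x)$ for some $j \in \Ne$. Equivalently, $i \in \support(x) - j$ for some $j \in \Ne$. Hence
\[
\support(F(x)) \subseteq \bigcup_{j \in \Ne} \bigl(\support(x) - j\bigr),
\]
and taking cardinalities, $\rank(F(x)) = |\support(F(x))| \leq \sum_{j \in \Ne} |\support(x) - j| = |\Ne| \cdot \rank(x)$, since translation preserves cardinality. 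This also confirms that $F(x)$ is finite when $x$ is, consistent with what was already noted.

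For the abelian addendum, I would use the same argument applied to $\dual F$, which is a CA on $\dual G^\Z$ with neighborhood (a translate of) $-\Ne$, hence of the same size $|\Ne|$, and with quiescent state $\mathbf 1$; then $\rank(\chi \circ F) = \rank(\dual F(\confcar\chi)) \leq |\Ne| \cdot \rank(\confcar\chi) = |\Ne| \cdot \rank(\chi)$ by the configuration version. Alternatively one can repeat the combinatorial argument intrinsically: writing $\chi \circ F(x) = \prod_{j \in \Z}\bigl(\prod_{k}\chi_{k+j}\circ \Delta_F(1,k)\bigr)(x_j)$ as in Lemma~\ref{lem:localrank}, if $j$ lies in the support of $\chi \circ F$ then some $\chi_{k+j} \circ \alfaendo$-type factor is nontrivial, forcing $k + j \in \support(\chi)$ for some $k$ in the (finite) neighborhood, giving the same union bound.

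I do not expect any genuine obstacle here: the only point requiring minor care is making sure the quiescence hypothesis is actually invoked (without it the bound can fail — a constant-nonzero output on a uniform-$0$ window would break finiteness), and correctly tracking that duality sends a neighborhood of size $|\Ne|$ to one of size $|\Ne|$ so that the constant is genuinely $|\Ne|$ and not something larger.
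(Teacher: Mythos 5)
Your proof is correct and follows essentially the same route as the paper: quiescence forces $\support(F(x))$ to lie in the union of the $|\Ne|$ translates of $\support(x)$, giving the cardinality bound, and the character statement follows by applying this to $\dual F$, whose neighborhood has the same size. Your write-up is just a more explicit version of the paper's one-line argument (and gets the sign of the translate right, which is immaterial for the bound).
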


\begin{proof}
Since $0$ is quiescent, the only nonzero cells in $F(x)$ belong to $\support(x)+\Ne$. It follows that $\rank(F(x)) \leq |\Ne|\cdot \rank(x)$. The second statement follows by applying the result to $\dual{F}$, noticing that $F$ and $\dual{F}$ have the same neighbourhood size by Equation~\ref{eq:defdual}.
\end{proof}

The converse lemma holds for reversible CA:

\begin{lemma}
  \label{lem:rankdecreasing}
  Let $F$ be a reversible CA with quiescent state $0$. There exists a constant $C > 0$ such that for any finite configuration $x$, $\rank(F(x)) \geq C\cdot\rank(x)$.

  In particular, if $F$ is abelian, there exists $C>0$ such that $\rank(\chi\circ F) \geq C \cdot \rank(\chi)$ for any character $\chi$.
\end{lemma}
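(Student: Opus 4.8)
The plan is to deduce this from Lemma~\ref{lem:rankincrease} applied to the inverse CA, together with the fact that the inverse of a reversible CA is itself a CA with the same quiescent state. First I would check that $0$ is quiescent for $F^{-1}$: since $F(0^\Z)=0^\Z$ and $F$ is injective, we get $F^{-1}(0^\Z)=0^\Z$. Being a reversible CA, $F^{-1}$ is a cellular automaton with some finite neighbourhood $\Ne'$. Now for any finite configuration $x$, write $x = F^{-1}(F(x))$; applying Lemma~\ref{lem:rankincrease} to the CA $F^{-1}$ and the finite configuration $F(x)$ gives $\rank(x) = \rank(F^{-1}(F(x))) \leq |\Ne'|\cdot\rank(F(x))$, hence $\rank(F(x)) \geq \frac{1}{|\Ne'|}\rank(x)$, and we take $C = 1/|\Ne'|$.

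For the second statement, when $F$ is abelian and reversible, by Lemma~\ref{lem:dualcompose} the dual $\dual{F}$ is also reversible, and it is an abelian CA on $\dual{G}^\Z$ with quiescent state $\mathbf 1$ (the trivial character), whose neighbourhood has the same size as that of $F$ by Equation~\ref{eq:defdual}; moreover $(\dual{F})^{-1} = \dual{F^{-1}}$ has neighbourhood of the same size as $F^{-1}$. Applying the first part of the lemma to $\dual{F}$ yields a constant $C>0$ with $\rank(\dual{F}(c)) \geq C\cdot\rank(c)$ for every finite configuration $c$ of $\dual{G}^\Z$. Since $\dual{F}(\confcar{\chi}) = \confcar{\chi\circ F}$ and $\confcar{\cdot}$ preserves rank, this reads $\rank(\chi\circ F) \geq C\cdot\rank(\chi)$ for every character $\chi$.

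I do not expect a genuine obstacle here; the only point requiring a little care is making sure that the inverse of a reversible CA really is a CA (so that it has a finite neighbourhood and Lemma~\ref{lem:rankincrease} applies) and that it shares the quiescent state $0$ — both are standard facts, the first being the Curtis–Hedlund–Lyndon theorem applied to the continuous shift-commuting bijection $F^{-1}$.
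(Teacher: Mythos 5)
Your proposal is correct and follows exactly the paper's argument: apply Lemma~\ref{lem:rankincrease} to the inverse CA $F^{-1}$ (which is a CA with the same quiescent state, by Curtis--Hedlund--Lyndon), and obtain the character version via the duality facts in Lemma~\ref{lem:dualcompose}. You have simply spelled out the details that the paper leaves implicit.
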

\begin{proof}Apply Lemma~\ref{lem:rankincrease} on $F^{-1}$. The second point uses the last point of Lemma~\ref{lem:dualcompose}. \end{proof}

\begin{definition}
  A CA $F$ with a quiescent state $0$ is \emph{strongly diffusive} (resp. diffusive in density) if for any finite configuration $c$, we have ${\rank\bigl(F^t(c)\bigr)\to \infty}$ (resp. on a subsequence of density $1$).
\end{definition}

\begin{lemma}
  An abelian CA $F$ is strongly \chidiffusive (resp. \chidiffusive in density) if and only if $\dual{F}$ is strongly diffusive (resp. diffusive in density).
\end{lemma}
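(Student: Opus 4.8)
The plan is to unwind the definitions and reduce the claim to the identity $\Psi(\chi \circ F^t) = \dual{F}^t(\Psi(\chi))$, after which the equivalence of the two diffusivity notions is essentially a matter of matching the quantity $\rank(\chi \circ F^t)$ with $\rank(\dual{F}^t(c))$ for $c = \Psi(\chi)$. First I would recall that $\Psi$ is the bijection between characters $\chi \in \dual{\az}$ and finite configurations of $\dual{G}^\Z$, and that by construction $\Psi$ preserves support and hence rank: $\rank(\chi) = \rank(\Psi(\chi))$ for every character $\chi$. Next, from the definition of $\dual{F}$ we have $\dual{F}(\Psi(\chi)) = \Psi(\chi \circ F)$, and iterating (using Lemma~\ref{lem:dualcompose}, specifically $\dual{F^t} = (\dual{F})^t$) gives $(\dual{F})^t(\Psi(\chi)) = \dual{F^t}(\Psi(\chi)) = \Psi(\chi \circ F^t)$ for every $t \geq 0$.

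Combining these two observations yields, for every character $\chi$ and every $t$,
\[
\rank\bigl((\dual{F})^t(\Psi(\chi))\bigr) = \rank\bigl(\Psi(\chi \circ F^t)\bigr) = \rank\bigl(\chi \circ F^t\bigr).
\]
Now I would argue both directions. If $\dual{F}$ is strongly diffusive, then for any nontrivial character $\chi$, the finite configuration $c = \Psi(\chi)$ is nonzero, so $\rank((\dual{F})^t(c)) \to \infty$, hence $\rank(\chi \circ F^t) \to \infty$; since this holds for all nontrivial $\chi$, $F$ is strongly \chidiffusive. Conversely, if $F$ is strongly \chidiffusive, then for any nonzero finite configuration $c$ of $\dual{G}^\Z$ there is (by surjectivity of $\Psi$ onto finite configurations) a nontrivial character $\chi$ with $\Psi(\chi) = c$, and $\rank((\dual{F})^t(c)) = \rank(\chi \circ F^t) \to \infty$; thus $\dual{F}$ is strongly diffusive. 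The density version is identical, replacing ``$\to \infty$'' by ``$\to \infty$ along an increasing sequence of times of lower density one'' throughout; the displayed rank equality holds for every individual $t$, so a subsequence witnessing diffusion on one side witnesses it on the other.

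I do not expect a genuine obstacle here: the statement is really a bookkeeping consequence of the definition of the dual CA together with Lemma~\ref{lem:dualcompose}. The one point that needs a word of care is that $\dual{F}$ is initially defined only on finite configurations and then extended by continuity, so one should note that the rank identity is used only on finite configurations (which is where both diffusivity notions live), and that $\Psi$ is exactly the bijection onto those finite configurations — the extension to all of $\dual{G}^\Z$ plays no role in this lemma. A secondary small check is that ``nontrivial character'' corresponds precisely to ``nonzero finite configuration of $\dual{G}^\Z$'' under $\Psi$, which is immediate since $\Psi(\mathbf{1})$ is the constant-$1$ configuration, the zero of the group $(\dual{G},\times)$.
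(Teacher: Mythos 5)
Your proof is correct and follows exactly the same route as the paper, which compresses the whole argument into the two identities $\dual{F}(\confcar{\chi}) = \confcar{\chi\circ F}$ and $\rank(\chi) = \rank(\confcar{\chi})$; you have simply spelled out the iteration via Lemma~\ref{lem:dualcompose} and the bijectivity of $\Psi$ onto nonzero finite configurations, which the paper leaves implicit.
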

\begin{proof}
  ${\dual{F}(\confcar{\chi}) = \confcar{\chi\circ F}}$ and ${\rank(\chi) = \rank\bigl(\confcar{\chi}\bigr)}$.
\end{proof}

\newcommand\s{\sigma}

\begin{definition}[Soliton] Let $F$ be an abelian CA. A \emph{soliton} is a finite configuration ${c\neq\overline{0}}$ such that ${F^p(c) = \sigma^q(c)}$ for some ${p \geq 1}$ and ${q\in\Z}$.
  \end{definition}

Intuitively, having a soliton is the opposite of being diffusive (even in density). In the remainder of the section we will develop this intuition and prove a series of technical results about solitons that will be culminate in the characterization of randomization in density in the next section.

  First note that all the configurations in the orbit of a soliton have bounded rank. Conversely, we can extract a soliton from any orbit of finite configurations whose rank is bounded on a set of time steps of positive density:

\begin{proposition}\label{prop:solitons}
  Let $F : \az\to\az$ be a surjective cellular automaton with a quiescent state 0. Assume that $F$ is not diffusive in density. Then $F$ admits a soliton.
\end{proposition}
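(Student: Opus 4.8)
The plan is to argue by induction on an integer bound $C$, establishing the (formally stronger) statement: \emph{if there is a nonzero finite configuration $c$, an integer $C\ge 1$, and a set $D\subseteq\N$ of positive upper density with $\rank(F^t(c))\le C$ for all $t\in D$, then $F$ has a soliton.} The hypothesis of the proposition reduces to this: unwinding "$F$ is not diffusive in density" (using Lemma~\ref{lem:DensityDiagonalization}, exactly as in the characterisation of convergence in density) yields a nonzero finite $c$ and a $C$ with $\{t:\rank(F^t(c))\le C\}$ of positive upper density. Moreover $C\ge 1$: since $F$ is surjective it is pre-injective (Garden of Eden / Moore--Myhill theorem), hence injective on the set of finite configurations, hence so is every iterate $F^t$, so $F^t(c)\ne\overline 0$. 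This injectivity on finite configurations is the only structural consequence of surjectivity I will use, and I will use it again below.

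\emph{Bounded case (which also settles the base $C=1$).} Suppose that along some infinite $D'\subseteq D$ the diameters $\operatorname{diam}(F^t(c))$ stay bounded. Then the translates $\sigma^{-\min\support(F^t(c))}(F^t(c))$, $t\in D'$, take only finitely many values (finitely many finite configurations of rank $\le C$ inside a fixed window), so by pigeonhole $F^{t_2}(c)=\sigma^{q}(F^{t_1}(c))$ for some $t_1<t_2$ in $D'$ and some $q\in\Z$, whence $F^{t_2-t_1}(d)=\sigma^{q}(d)$ for the nonzero finite configuration $d:=F^{t_1}(c)$; that is, $d$ is a soliton. When $C=1$ injectivity forces $\rank(F^t(c))=1$ on $D$, so $\operatorname{diam}(F^t(c))=0$ and we are in this case.

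\emph{Inductive step ($C\ge 2$).} We may now assume $\operatorname{diam}(F^t(c))\to\infty$ as $t\to\infty$ within $D$. For $t\in D$ put $\tilde d_t:=\sigma^{-\min\support(F^t(c))}(F^t(c))$ (leftmost nonzero cell at $0$). By compactness pass to a subsequence $m_j\to\infty$ in $D$ with $\tilde d_{m_j}\to e$; then $e$ is finite and nonzero, and $\rank(e)<\rank(F^{m_j}(c))\le C$ because the rightmost nonzero cell of $\tilde d_{m_j}$ escapes to $+\infty$. Passing to a further subsequence we may also assume the bounded support points of $\tilde d_{m_j}$ have stabilised, so $\tilde d_{m_j}=e\cup w_j$ where the nonzero finite configuration $w_j$ is supported to the right of $e$ across a gap $g_j\to\infty$. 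By finite speed of propagation, for each fixed $s$ and all large $j$ the dynamics of $F^s$ on these two far-apart parts decouple, so $\support(F^s(\tilde d_{m_j}))=\support(F^s(e))\sqcup\support(F^s(w_j))$ and therefore $\rank(F^{m_j+s}(c))=\rank(F^s(e))+\rank(F^s(w_j))\ge\rank(F^s(e))+1$ (using $F^s(w_j)\ne\overline0$). Hence $\rank(F^s(e))\le\rank(F^{m_j+s}(c))-1$ for all large $j$; so if $m_j+s\in D$ for infinitely many $j$ then $\rank(F^s(e))\le C-1$. Consequently, if the subsequence $(m_j)$ can be chosen so that the \emph{return set} $R:=\{s\ge0:\ m_j+s\in D\text{ for all large }j\}$ has positive upper density, then $(e,\,C-1,\,R)$ satisfies the inductive hypothesis and $F$ has a soliton.

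\emph{The delicate point.} Arranging $R$ to have positive upper density is where the hypothesis "diffusive in density'' (a positive density of good times) is genuinely used, rather than merely "strongly diffusive'' (infinitely many good times); this is handled by a soft recurrence argument. Put a shift-invariant measure $\mu$ on the orbit closure of $\mathbf 1_D$ in $\{0,1\}^{\N}$ as a weak-$*$ limit of empirical averages along a sequence realising the upper density of $D$, so that $\mu$ gives positive mass to $\{x:x_0=1\}$; by Birkhoff's ergodic theorem together with the ergodic decomposition there is a point $x^{*}$ in the support of $\mu$ with $x^{*}_0=1$ whose set of $1$-coordinates has positive upper density. Choosing $m_j\in D$ with $T^{m_j}\mathbf 1_D\to x^{*}$ makes $R$ contain that set, and passing to a further subsequence of $(m_j)$ to obtain the convergence $\tilde d_{m_j}\to e$ only enlarges $R$. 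Since each inductive step lowers $C$ by one and $C=1$ falls under the bounded case, the induction terminates, proving the proposition.
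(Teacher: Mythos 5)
Your proof is correct, and it follows a genuinely different route from the paper's. The paper works in one pass: at the good times it orders the $\le C$ nonzero cells, takes the maximal initial block whose width stays bounded on a positive-upper-density set of times, observes by pigeonhole that some content $w$ of that block recurs with positive density, and then uses the positive density to find two such times at distance $T\le N$ at which the gap to the remaining cells exceeds $2rN$; finite propagation speed plus pre-injectivity (Garden of Eden) then force the isolated block to reproduce itself up to a shift, so $w$ itself is the soliton. You instead induct on the rank bound $C$: either the diameters stay bounded along infinitely many good times (pigeonhole, which also covers $C=1$), or you recenter at the leftmost cell, extract a limit configuration $e$ of rank $\le C-1$, and show via decoupling and pre-injectivity that $e$ satisfies the hypothesis with bound $C-1$ along a return set of good times, whose positive upper density you secure by a Furstenberg-correspondence/Birkhoff argument on $\mathbf 1_D$. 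Both proofs rest on the same two pillars --- pre-injectivity of surjective CA and the positive-density hypothesis --- but deploy the density differently: the paper uses it to bound gaps between consecutive good times, you use it to produce a positive-density set of simultaneous returns. The paper's argument is elementary, self-contained, and exhibits the soliton directly; yours is more modular (the recurrence step is cleanly isolated and the ``bounded case'' is the only place a soliton is actually built) at the cost of invoking ergodic theory and some induction bookkeeping. In a final write-up you should spell out the decomposition $\tilde d_{m_j}=e\sqcup w_j$ with gap tending to infinity: since $\tilde d_{m_j}\to e$ with $e$ finite while $\operatorname{diam}(\tilde d_{m_j})\to\infty$, the first position where $\tilde d_{m_j}$ differs from $e$ tends to $+\infty$ and carries a nonzero cell, which is exactly what the decoupling and the bound $\rank(e)\le C-1$ require.
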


\begin{proof}
There is a finite initial configuration $x$ and an increasing sequence $(T_n)_{n\in\N}$ of positive upper density such that $\rank(F^{T_n}(x))$ is bounded.
  Without loss of generality, assume that $\rank(F^{T_n}(x))=k$ for all $n$. Denote
  $i_1(T_n),\dots, i_k(T_n)$ the nonzero coordinates at time $T_n$.

  Now let $m\in\{1,\dots,k\}$ be the maximum integer such that there exists an integer
  $M$ such that the subsequence:
  \[(T_{\varphi(n)})_{n\in\N} = \left\{n\in\N : i_m(T_n) - i_1(T_n) \leq M\right\}\]
  has positive upper density. In particular $(T_{\varphi(n)})_{n\in\N}$ has positive upper density. We distinguish two cases.

  \fbox{$m=k$:} $F^{T_{\varphi(n)}}(x)_{[i_1(T_{\varphi(n)}),i_k(T_{\varphi(n)})]}$
  can take at most $|\bigcup_{j=0}^M\A^j| = \sum_{j=0}^M|\A|^j$
  different values. By the pidgeonhole principle, we can find two
  integers $a < b$ such
  that
  \[F^{T_{\varphi(a)}}(x)_{[i_1(T_{\varphi(a)}),i_k(T_{\varphi(a)})]}
  =
  F^{T_{\varphi(b)}}(x)_{[i_1(T_{\varphi(b)}),i_k(T_{\varphi(b)})]}.\]
  But this means that
  $F^{T_{\varphi(b)}}(x) = \s^{i_1(T_{\varphi(b)})-i_1(T_{\varphi(a)})} \circ
  F^{T_{\varphi(a)}}(x)$, so we have found a soliton.

  \fbox{$m<k$:} First, by the same argument as above,
  $F^{T_{\varphi(n)}}(x)_{[i_1(T_{\varphi(n)}),i_m(T_{\varphi(n)})]}$
  can only take a finite number of values, so at least one of these
  words appear with positive density. Denote $w$ the corresponding
  word and $(T_{\varphi'(n)})_{n\in\N}$ the corresponding subsequence.
  We now prove that the configuration
  \[\cdots 0\cdot 0\cdot 0\cdot w\cdot 0\cdot 0\cdot 0\cdots \]
  is a soliton.

  Take $N\in\N$ such that $\frac 1N$ is a lower bound on the density of $(T_{\varphi'(n)})_{n\in\N}$ and let $r$ be the radius of $F$. By construction of $m$, the times $T_{\varphi'(n)}$ where $i_{m+1}(T_{\varphi'(n)})-i_m(T_{\varphi'(n)}) \leq 2rN$ have upper density $0$. Therefore we extract from the sequence $(T_{\varphi'(n)})_{n\in\N}$ a new subsequence $(T_{\varphi''(n)})_{n\in\N}$ corresponding to times $t$
  where $i_{m+1}(t)-i_m(t) > 2rN$ with the same upper density. In particular,
  we can find some $n$ such that $T = T_{\varphi''(n+1)} - T_{\varphi''(n)} \leq N$.

  As shown in Figure~\ref{fig:soliton}, only two disjoint areas can contain nonzero values in $F^{T_{\varphi''(n+1)}}(x)$:
  \begin{itemize}
  \item the interval
    $[i_1(T_{\varphi''(n)})-rT,i_m(T_{\varphi''(n)})+rT]$,
    
    which contains $f^T\left(0^{2rT}\cdot w\cdot 0^{2rT}\right)$;
  \item the interval
    $[i_{m+1}(T_{\varphi''(n)})-rT,i_k(T_{\varphi''(n)})+rT]$

    which contains
    $f^T\left(0^{2rT}\cdot
      F^{T_{\varphi''(n)}}(x)_{[i_{m+1}(T_{\varphi''(n)}),i_k(T_{\varphi''(n)})]}\cdot
      0^{2rT}\right)$.
  \end{itemize}

  \begin{figure}
    \centering
    \includegraphics{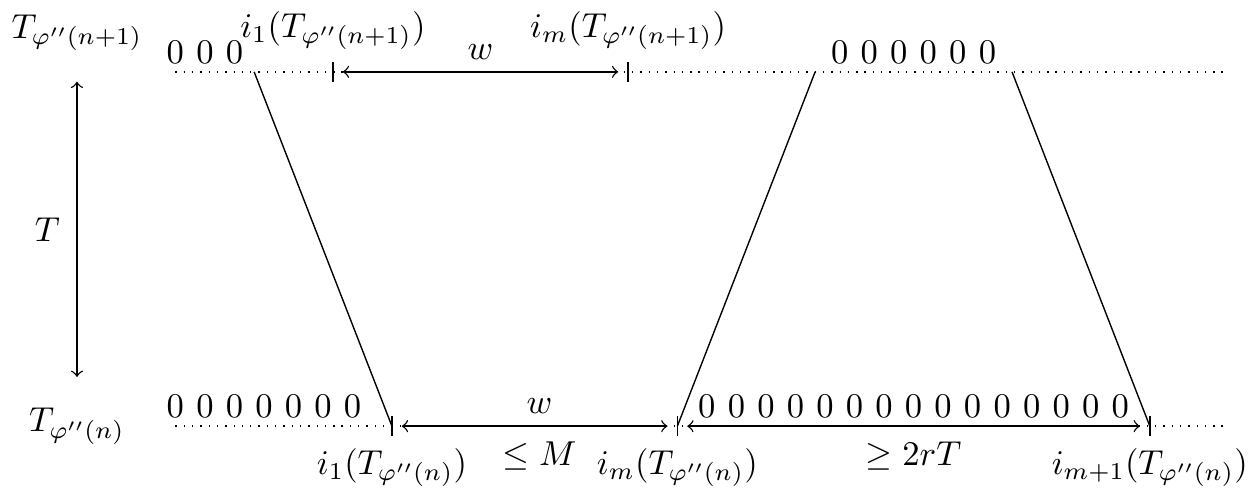}
    \caption{The finite configuration defined by $w$ is a soliton.}
    \label{fig:soliton}
  \end{figure}
  
  Indeed, any cell outside of these regions can be written as
  $f^T(0^{2rT+1}) = 0$ since $0$ is quiescent. Consider the
  different possibilities for the value of $i_1(T_{\varphi''(n+1)})$:

  \begin{itemize}
  \item if $i_1(T_{\varphi''(n+1)}) < i_1(T_{\varphi''(n)})-rT$, then
    $F^{T_{\varphi''(n+1)}}(x)_{i_1(T_{\varphi''(n+1)})}=f^T(0^{2rT+1})=0$ which is a
    contradiction;
  \item if $i_1(T_{\varphi''(n+1)}) > i_1(T_{\varphi''(n)})+rT$, then
    $F^{T_{\varphi''(n)}}(x)_{[i_1(T_{\varphi''(n)})\pm rT]}$ is a
    nonzero word whose image under $f^T$ is zero. This means $F$ is not preinjective (two different configurations that differ on a finite subset of cells have same image), so by the Garden-of-Eden theorem \cite{hedlund,Coornaert10} it is not surjective, a contradiction.
  \end{itemize}

  Therefore we have
  $i_1(T_{\varphi''(n+1)}) \in [i_1(T_{\varphi''(n)})\pm rT]$. In
  particular, the interval
  $I = [i_1(T_{\varphi''(n)})-rT,\ i_m(T_{\varphi''(n)})+rT]$ contains all
  $i_\ell(T_{\varphi''(n+1)})$ for $\ell \leq m$. Using a similar argument,
  $i_{m+1}(T_{\varphi''(n+1)}) \in [i_{m+1}(T_{\varphi''(n)})\pm rT]$, so that $I$
  does not contain any $i_\ell(T_{\varphi''(n+1)})$ for
  $\ell > m$.

  From this we conclude that for some constant $C$:
  \begin{align*}
    F^T\left(0^{2rT}\cdot w\cdot 0^{2rT}\right) &= 0^{C}\cdot w\cdot 0^{2rT-C}
  \end{align*}
  and therefore we have found a soliton.
\end{proof}

The remainder of this section is dedicated to proving the following proposition.

\begin{proposition}
  \label{prop:solitondual}
  Let $F$ be an abelian CA. Then $F$ has a soliton if and only if $\dual{F}$ has a soliton.
\end{proposition}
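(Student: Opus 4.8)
The plan is to recast the existence of a soliton as a failure of pre-injectivity of certain auxiliary abelian CA, and then to show that pre-injectivity is preserved under duality, using the Garden-of-Eden theorem together with Pontryagin duality. Since $\dual{\dual F}=F$ up to canonical isomorphism (Lemma~\ref{lem:dualcompose}), the two implications are symmetric, so it suffices to prove the equivalence. First I would rephrase both sides uniformly. Unwinding the definition, $F$ has a soliton if and only if there are $p\geq 1$ and $q\in\Z$ for which the abelian CA $W_{p,q}:=\sigma^{-q}\circ F^p$ has a nonzero finite fixed point (just rewrite $F^p(c)=\sigma^q(c)$ as $W_{p,q}(c)=c$). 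Using Lemma~\ref{lem:dualcompose} — in particular $\dual{F^p}=(\dual F)^p$ and $\dual{\sigma}=\sigma^{-1}$, so $\dual{\sigma^{-q}}=\sigma^{q}$ — one computes $\dual{W_{p,q}}=\sigma^{q}\circ(\dual F)^p$; as $(p,q)$ runs over $(\N\setminus\{0\})\times\Z$ these are exactly all CA of the form $\sigma^{q'}\circ(\dual F)^{p'}$ with $p'\geq 1$, so $\dual F$ has a soliton if and only if some $\dual{W_{p,q}}$ has a nonzero finite fixed point. Hence the proposition reduces to the claim: for every abelian CA $W$, $W$ has a nonzero finite fixed point if and only if $\dual W$ does.

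To handle that claim I would introduce $E:=W-Id$, an abelian CA satisfying $E(c)=0^\Z\iff W(c)=c$. For an arbitrary abelian CA $E$, failing to be pre-injective is the same as $\ker E$ containing a nonzero finite configuration (take the difference of two finite configurations witnessing non-injectivity); thus $W$ has a nonzero finite fixed point iff $E$ is not pre-injective. Moreover, since $\chi\circ(E_1+E_2)=(\chi\circ E_1)\cdot(\chi\circ E_2)$ for every character $\chi$, the map $F\mapsto\dual F$ is additive on abelian CA, so $\dual E=\dual W-Id$. The claim therefore becomes: an abelian CA $E$ is pre-injective if and only if $\dual E$ is pre-injective.

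For this last equivalence, note that $\Ima(E)$ is a closed subgroup of $G^\Z$ (continuous homomorphic image of a compact group), so by Pontryagin duality \cite{Deitmar} it is proper exactly when some nontrivial character of $G^\Z$ vanishes on it. On the other hand, $\dual E(\Psi(\chi))=\Psi(\chi\circ E)$ and $\Psi$ is a bijection between $\widehat{G^\Z}$ and the finite configurations of $\dual G^\Z$ (every character of $G^\Z$ has finite rank), so $\dual E$ is not pre-injective iff $\chi\circ E=\mathbf 1$ for some nontrivial $\chi$, i.e. iff $\Ima(E)\neq G^\Z$, i.e. iff $E$ is not surjective. Finally the Garden-of-Eden theorem for CA over $\Z$ \cite{hedlund,Coornaert10} gives that $E$ is surjective iff it is pre-injective; chaining the equivalences yields that $E$ is pre-injective iff $\dual E$ is. Assembling all the reductions: $F$ has a soliton $\iff$ some $W_{p,q}-Id$ is not pre-injective $\iff$ some $\dual{W_{p,q}}-Id$ is not pre-injective $\iff$ $\dual F$ has a soliton.

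The main obstacle is getting the duality bookkeeping exactly right: the relevant condition is ``$W_{p,q}$ has a nonzero finite fixed point'', not ``$\ker W_{p,q}$ contains a nonzero finite element'', which is precisely why one must pass to $E=W_{p,q}-Id$ and use additivity of duality. It is also worth stressing that Proposition~\ref{prop:solitons} cannot be substituted here, since it presupposes surjectivity, whereas non-surjective abelian CA can still carry solitons (for instance a coordinate projection on $(\Z/2\Z)^2$); the kernel/character argument above deals with all abelian CA uniformly.
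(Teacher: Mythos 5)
Your proof is correct, but after the common first step it takes a genuinely different route from the paper's. Both arguments begin identically: a soliton of $F$ is recast as a nonzero finite fixed point of some $W_{p,q}=\sigma^{-q}\circ F^p$, and Lemma~\ref{lem:dualcompose} matches these auxiliary CA with the corresponding ones for $\dual{F}$. The paper then proves the key claim --- an abelian CA $W$ has a nontrivial finite fixed point iff $\dual{W}$ does (Lemma~\ref{lem:dualfixedpoint}) --- by counting spatially periodic fixed points: it establishes $|X_{W,n}|=|X_{\dual{W},n}|$ for every $n$ via the identity $|\ker h|=|\ker \dual{h}|$ for endomorphisms of finite abelian groups (Lemma~\ref{lem:dualkernel}), and relates the existence of a finite fixed point to the growth of $n\mapsto|X_{W,n}|$ (Lemma~\ref{lem:fixper}). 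You instead pass to $E=W-Id$, translate ``nonzero finite fixed point of $W$'' into ``failure of pre-injectivity of $E$'', and close the loop with two classical facts: the Garden-of-Eden theorem ($E$ pre-injective iff surjective) and the Pontryagin annihilator characterization ($E$ surjective iff no nontrivial character annihilates $\Ima(E)$, i.e.\ iff $\dual{E}$ is pre-injective, using that $\Ima(E)$ is closed as a continuous image of a compact group). Your bookkeeping $\dual{W-Id}=\dual{W}-Id$ via additivity of the duality is the right move and checks out, as does your remark that Proposition~\ref{prop:solitons} cannot be invoked here because it assumes surjectivity. What each approach buys: yours is shorter and more conceptual, reusing the Garden-of-Eden theorem that the paper already needs for Proposition~\ref{prop:solitons}; the paper's is more elementary and self-contained (finite group theory plus pigeonhole, no appeal to Garden-of-Eden or to duality for closed subgroups of the infinite compact group $G^\Z$), and it yields the extra quantitative information $|X_{F,n}|=|X_{\dual{F},n}|$ for every $n$, which also explains the remark on higher dimensions following Lemma~\ref{lem:fixper}.
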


To prove this proposition, we need a series of lemmas. For an abelian CA $F$, a finite fixed point is just a fixed point which is also a finite configuration. A finite fixed point is non-trivial if it is not the configuration everywhere equal to $0$.

\begin{lemma}
  \label{lem:fixper}
  Let $F$ be an abelian CA and denote by $X_F$ the set of spatially periodic fixed points: \[{X_{F,n} = \{x : \sigma^n(x)=x\text{ and }F(x)=x\}} \qquad X_F = \bigcup_n X_{F,n}.\]
  $F$ has a non-trivial finite fixed point if and only if $X_F$ is infinite. In this case we actually have $|X_{F,n}| = 2^{\Omega(n)}$.
\end{lemma}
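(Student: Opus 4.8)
The plan is to prove the two directions separately, treating the quantitative statement as a strengthening of the "if" direction. First suppose $X_F$ is infinite. Since for each $n$ the set $X_{F,n}$ is finite (it injects into $\A^n$ via the restriction to one period, as a point of $X_{F,n}$ is determined by its values on $\{0,\dots,n-1\}$), infinitude of $X_F = \bigcup_n X_{F,n}$ forces $X_{F,n}$ to be nonempty and in fact arbitrarily large for suitable $n$. Any nonzero element of $X_{F,n}$ is a spatially periodic fixed point, but to extract a \emph{finite} nontrivial fixed point I would argue as follows: if $x \in X_{F,n}$ is nonzero, consider the configuration $y$ obtained by restricting $x$ to a single period and padding with $0$'s outside. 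Then $y$ is a finite configuration. The subtlety is that $y$ need not be a fixed point. To fix this I would instead use the abelian/linear structure: the map $x \mapsto F(x) - x$ is an abelian CA, and $X_{F,n}$ is the kernel of its restriction to the (finite) group of $n$-periodic configurations; if $\dim_{\F}$ of these kernels is unbounded one can localize. Alternatively — and this is cleaner — I would appeal directly to the contrapositive and to Proposition~\ref{prop:solitons}-type extraction: a uniform bound on $|X_{F,n}|$ over all $n$ is what we really want to contradict, so it is enough to show that a nontrivial finite fixed point generates $2^{\Omega(n)}$ spatially periodic ones.

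For that generation step (which gives both the "only if" direction and the counting bound $|X_{F,n}| = 2^{\Omega(n)}$): let $c$ be a nontrivial finite fixed point, with $\support(c) \subset [0, L)$. For any period $n > L$, and any finite subset $S \subset \Z$, the configuration $c_S = \sum_{j \in S} \sigma^{jn}(c)$ is a well-defined configuration (the shifted copies have disjoint supports when $n > L$), it is $n'$-periodic for $n' = n \cdot (\text{period of } S)$, and by linearity $F(c_S) = \sum_{j\in S} \sigma^{jn} F(c) = \sum_{j\in S}\sigma^{jn}(c) = c_S$, so $c_S \in X_F$. Taking $S$ ranging over all subsets of $\{0, 1, \dots, m-1\}$ and making everything $mn$-periodic by repeating, we get $2^m$ distinct elements of $X_{F,mn}$, which is $2^{\Omega(N)}$ in $N = mn$ since $n$ is fixed. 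The distinctness is clear because $c \neq 0$ and the disjoint-support copies can be read off independently. This simultaneously shows $X_F$ is infinite and gives the exponential lower bound.

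For the "if" direction proper (infinite $X_F$ $\Rightarrow$ nontrivial finite fixed point), I would take $x \in X_{F,n}$ nonzero with $n$ large, and truncate: since $F$ has some radius $r$, the value $F(y)_i$ for a configuration $y$ depends only on $y_{[i-r, i+r]}$. Let $y$ agree with $x$ on a long interval $[0, n)$ (with $n$ much larger than $r$) and be $0$ outside. Then $F(y)_i = F(x)_i = x_i = y_i$ for all $i \in [r, n-r)$, so $F(y)$ and $y$ agree on a long central block; $F(y)$ is finite but possibly not fixed near the boundary. Here I would use that the number of nonzero cells is bounded along with surjectivity/preinjectivity (Garden-of-Eden, as in Proposition~\ref{prop:solitons}) — but a more direct route, given that the paper only needs the existence of \emph{some} finite fixed point and later only uses solitons, is to observe that one can instead work inside the group $X_{F,n}$ directly and invoke that a nonzero element supported within a single period, \emph{if} it happens to have support strictly smaller than the period after a shift, is already essentially finite; and by a pigeonhole/length-reduction argument on the growing family one such element must exist. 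The main obstacle I anticipate is precisely this extraction of a genuinely \emph{finite} (not merely periodic) fixed point from an infinite supply of periodic ones — reconciling the boundary mismatch — and I expect the intended argument routes around it by exploiting linearity and surjectivity rather than a naive truncation, possibly reducing to the soliton machinery already developed (a fixed point is a soliton with $p=1$, $q=0$), so that Proposition~\ref{prop:solitons} or its proof technique can be invoked to produce the finite object.
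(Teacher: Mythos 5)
Your first direction (a nontrivial finite fixed point generates $2^{\Omega(n)}$ spatially periodic fixed points, hence infinitude of $X_F$) is correct and essentially coincides with the paper's argument: the paper packs shifted copies of the finite fixed point, padded by $0^r$ on each side, into blocks of a fixed length $k$ and lets each block be present or absent, which is the same as your $c_S=\sum_{j\in S}\sigma^{jn}(c)$ construction followed by periodization; both rest on linearity together with disjointness of supports.

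The converse direction is where your proposal has a genuine gap: you correctly identify the obstacle (truncating a periodic fixed point to one period breaks the fixed-point property near the cut), but you do not resolve it --- you explicitly defer to ``the intended argument'', and the alternatives you float (Garden-of-Eden/surjectivity, or reduction to the soliton machinery of Proposition~\ref{prop:solitons}) are neither needed nor how the paper proceeds; note in particular that $F$ is not assumed surjective here. The paper's resolution is a short pigeonhole-plus-linearity step that you almost name but never execute. Since $X_{F,n}\subseteq X_{F,kn}$, infinitude of $X_F$ forces $n\mapsto|X_{F,n}|$ to be unbounded; choose $n$ with $|X_{F,n}|>|\A|^{2r}$ (where $r$ is the radius of $F$) and, by pigeonhole on the restriction to the $2r$ cells $[1,r]\cup[n-r+1,n]$, find two distinct $x_1,x_2\in X_{F,n}$ that agree there. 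By linearity $x_1-x_2$ is again a nonzero $n$-periodic fixed point which vanishes on those $2r$ cells, i.e.\ each period carries a zero buffer of width $r$ at each end. With that buffer the naive truncation (keep one period, set everything else to $0$) does work: every cell's radius-$r$ neighbourhood in the truncated configuration coincides with the corresponding neighbourhood either in $x_1-x_2$ or in $0^\Z$, so the truncation is a nontrivial finite fixed point. A further correction to your sketch: a periodic fixed point whose support is merely ``strictly smaller than the period'' is not automatically truncatable; one needs a run of at least $2r$ zeros in each period, which is exactly what the subtraction step manufactures.
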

\begin{remark}
  This remark holds in dimension $d$ if one replaces the infiniteness assumption by $|X_{F,n}| = 2^{\Omega(n^d)}$.
  \end{remark}

\begin{proof}
  First suppose that ${F(x)=x}$ where $x$ is a nontrivial finite configuration, and denote by $u$ a finite word containing the nonzero part of $x$. Let $r$ be the radius of $F$ and $k = |u|+2r$. Consider the set of finite words:
\[W_n = \left\{w\in\A^n\ :\ 
\begin{array}{ll}&\forall i, 0\leq i<\left\lfloor\frac nk\right\rfloor\Rightarrow \ w_{[ki,k(i+1)-1]} = 0^k \mbox{ or } w_{[ki,k(i+1)-1]} = 0^ru0^r\\
\mbox{and }&\\
&w_{k\cdot\left\lfloor\frac nk\right\rfloor, n} = 0\end{array}
\right\}.\]

Any periodic configuration made of concatenated copies of some word in $W_n$ (excepting $0^n$) is a nontrivial fixed point and $|W_n| = 2^{\lfloor\frac nk\rfloor}$. Therefore we have ${|X_{F,n}|} = 2^{\Omega(n)}$.

Conversely, suppose $X_F$ is infinite. Then, since $X_{F,n}\subset X_{F,kn}$ for $k\in\N^+$, $n\mapsto |X_{F,n}|$ is not bounded from above. For some $n$, $X_{F,n}$ must contain at least $2$ distinct configurations $x_1$ and $x_2$ such that $x_1|_{[1,r]} = x_2|_{[1,r]}$ and $x_1|_{[n-r+1,n]} = x_2|_{[n-r+1,n]}$ by the pidgeonhole principle. It follows that the configuration 
\[x : z\in\Z\mapsto
\begin{cases}
  0 &\text{ if ${z\leq0}$ or ${z>n}$}\\
  x_1(z)-x_2(z) &\text{ else}
\end{cases}
\]
is a non-trivial finite fixed point.
\end{proof}

\begin{lemma}
  \label{lem:dualkernel}
  Let $G$ be an abelian group. For any endomorphism ${h} : G \to G$, define its dual ${\dual{h}}$ by ${\dual{h}(\chi) = \chi\circ h}$ for any character ${\chi\in\dual{G}}$. Then we have ${|\ker(h)|=|\ker(\dual{h})|}$.
\end{lemma}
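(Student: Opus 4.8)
The plan is to reduce the statement to a cardinality count, using the first isomorphism theorem together with Pontryagin duality for finite abelian groups (here $G$ is finite, as everywhere in the paper). First I would note that, since $G$ is finite, the first isomorphism theorem gives $|\ker(h)| = |G|/|\Ima(h)|$, so it suffices to prove that $|\ker(\dual h)| = |G|/|\Ima(h)|$ as well.

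Next I would identify $\ker(\dual h)$ explicitly. Since $\dual h(\chi) = \chi \circ h$, a character $\chi \in \dual G$ lies in $\ker(\dual h)$ if and only if $\chi \circ h = \mathbf 1$, i.e.\ if and only if $\chi$ is trivial on the image subgroup $\Ima(h) \leq G$. Thus $\ker(\dual h)$ is exactly the annihilator $(\Ima h)^\perp = \{\chi \in \dual G : \chi|_{\Ima h} \equiv 1\}$, and it remains to compute its size.

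Then I would argue that for any subgroup $H \leq G$ one has $H^\perp \cong \widehat{G/H}$: a character of $G$ trivial on $H$ factors uniquely through the quotient map $G \to G/H$, which yields a group isomorphism between $H^\perp$ and $\widehat{G/H}$. Applying Proposition~\ref{prop:isodual} to the finite abelian group $G/H$ then gives $|H^\perp| = |\widehat{G/H}| = |G/H| = |G|/|H|$. Taking $H = \Ima(h)$ yields $|\ker(\dual h)| = |G|/|\Ima(h)| = |\ker(h)|$, which is the claim.

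The only real content is the isomorphism $H^\perp \cong \widehat{G/H}$, and even that is essentially the universal property of the quotient, so I do not anticipate a genuine obstacle; everything else is bookkeeping. (One could alternatively diagonalise $h$ via the structure theorem for finite abelian groups and reduce to the cyclic case, where $h$ and $\dual h$ both act as multiplication by the same integer; but the annihilator argument is shorter and avoids the case analysis.)
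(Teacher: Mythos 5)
Your argument is correct and is essentially the paper's own proof: both identify $\ker(\dual h)$ with the characters trivial on $\Ima(h)$, put these in bijection with $\widehat{G/\Ima h}$, and conclude via Proposition~\ref{prop:isodual} and the first isomorphism theorem. No substantive difference.
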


\begin{proof}
We have $\chi\in\ker(\dual h) \Leftrightarrow \chi\circ h = 1 \Leftrightarrow \Ima(h) \subset \ker(\chi)$. Therefore the restriction
\[\chi \mapsto \chi|_{(G/\Ima h)}\]
is a bijection between $\ker(\dual h)$ and $\dual{(G/\Ima h)}$. Since $\left |\dual{(G/\Ima h)}\right| = \left|G/\Ima h\right| = |\ker h|$ by Proposition~\ref{prop:isodual}, we conclude. 
\end{proof}

\begin{lemma}
  \label{lem:dualfixedpoint}
  Let $F$ be an abelian CA over alphabet $G$. $F$ has a non-trivial finite fixed point if and only if $\dual{F}$ has a non-trivial finite fixed point.  
\end{lemma}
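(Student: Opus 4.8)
The plan is to reduce the statement to a count of spatially periodic fixed points and then invoke Lemma~\ref{lem:fixper}. Note first that $\dual{F}$ is itself an abelian CA over the finite abelian group $\dual{G}$, with quiescent state the trivial character $\mathbf 1$, so Lemma~\ref{lem:fixper} applies to $\dual{F}$ exactly as to $F$; hence it suffices to show that $X_F$ is infinite if and only if $X_{\dual{F}}$ is infinite. For this I will prove the stronger equality $|X_{F,n}| = |X_{\dual{F},n}|$ for every $n\geq 1$.

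Fix $n\geq 1$. Identifying the spatially $n$-periodic configurations $\{x\in G^\Z : \sigma^n(x)=x\}$ with the finite abelian group $G^{\Z/n\Z}$, the CA $F$ (which commutes with $\sigma^n$) induces an endomorphism $F^{(n)}$ of $G^{\Z/n\Z}$ given by the same local rule, and by definition $X_{F,n} = \ker(F^{(n)} - \mathrm{Id})$; likewise $X_{\dual{F},n} = \ker(\dual{F}^{(n)} - \mathrm{Id})$ inside $\dual{G}^{\Z/n\Z}$. Identify $\widehat{G^{\Z/n\Z}}$ with $\dual{G}^{\Z/n\Z}$ by sending a character $\chi = \prod_{z\in\Z/n\Z}\chi_z(x_z)$ to the tuple $(\chi_z)_z$. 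Writing $F(x)_z = \sum_{i\in\Ne}\alfaendo_i(x_{z+i})$ and reindexing $w = z+i$, one gets
\[\chi\circ F^{(n)}(x) = \prod_{z\in\Z/n\Z}\prod_{i\in\Ne}\chi_z\bigl(\alfaendo_i(x_{z+i})\bigr) = \prod_{w\in\Z/n\Z}\Bigl(\prod_{i\in\Ne}\gamma_i(\chi_{w-i})\Bigr)(x_w),\]
with $\gamma_i(\chi)=\chi\circ\alfaendo_i$. Comparing with Equation~(\ref{eq:defdual}), this says precisely that under the above identification $\widehat{F^{(n)}}$ coincides with $\dual{F}^{(n)}$ (no extra left--right flip appears, consistently with the ``$z-i$'' already present in Equation~(\ref{eq:defdual})).

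Now duality of endomorphisms of a finite abelian group is additive — $\widehat{h_1+h_2}=\widehat{h_1}+\widehat{h_2}$, since each character is a homomorphism — and $\widehat{\mathrm{Id}}=\mathrm{Id}$, so $\widehat{F^{(n)}-\mathrm{Id}} = \widehat{F^{(n)}}-\mathrm{Id} = \dual{F}^{(n)}-\mathrm{Id}$. Lemma~\ref{lem:dualkernel} then yields $|X_{F,n}| = |\ker(F^{(n)}-\mathrm{Id})| = |\ker(\widehat{F^{(n)}-\mathrm{Id}})| = |\ker(\dual{F}^{(n)}-\mathrm{Id})| = |X_{\dual{F},n}|$. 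Finally, since $X_{F,n}\subseteq X_{F,kn}$ (and similarly for $\dual{F}$), $X_F$ is infinite if and only if $\sup_n|X_{F,n}|=\infty$; by the equality just proved the same holds for $\dual{F}$, and Lemma~\ref{lem:fixper} gives the claim. (By $\dual{\dual{F}}=F$ from Lemma~\ref{lem:dualcompose} one could also settle with a single implication, but the argument above is already symmetric.) The only delicate point is the identification $\widehat{F^{(n)}}=\dual{F}^{(n)}$: one must run the reindexing $w=z+i$ correctly and make sure the dual neighbourhood is the one dictated by Equation~(\ref{eq:defdual}) and not its mirror — but this is exactly the computation defining $\dual{F}$, so it transfers verbatim to the periodic quotient, the rest being bookkeeping together with Lemmas~\ref{lem:fixper} and~\ref{lem:dualkernel}.
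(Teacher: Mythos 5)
Your proof is correct and follows essentially the same route as the paper: reduce to counting spatially $n$-periodic fixed points via Lemma~\ref{lem:fixper}, identify the dual of the induced endomorphism on $n$-periodic points with the corresponding endomorphism for $\dual{F}$, and equate kernel sizes via Lemma~\ref{lem:dualkernel}. The only cosmetic difference is that the paper dualizes $F^{(n)}-\mathrm{Id}$ in one step (as the map $h_n$), whereas you dualize $F^{(n)}$ first and then invoke additivity of the duality functor; the underlying computation is identical.
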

\begin{proof}
  For any ${n \in\N}$ with $n \geq 1$, let $h_n$ be the endomorphism of ${G^n}$ defined as follows. For any $u\in G^n$, let $x^u\in G^\Z$ be the (spatially) periodic point of period $u$, i.e. $x^u_i = u_{i\mod n}$ for all $i$. For each $u \in G^n$, define
  \[ h_n(u) = \left(F(x^u) - x^u\right)_{[0,n-1]}.\] ${h_n}$ captures the action of the CA ${F-Id}$ on spatially periodic configurations of period $n$. In particular we have ${|\ker(h_n)|=|X_{F,n}|}$. Now consider its dual $\dual{h_n}$, which is an endomorphism of ${\dual{G^n}} = (\dual{G})^n$. For any $\chi = \prod_{1\leq i\leq n}\chi_i$,

  \begin{align*}
    \dual{h_n}(\chi): u\mapsto \prod_{i=1}^n\chi_i(F(x^u)-x^u)_i &= \prod_{i=1}^n\chi_i\circ F(x^u)_i\cdot (\chi_i(x^u)_i)^{-1}\\
    &= (\dual{F}(x^\chi)\cdot x^{1/\chi}) (x^u)
  \end{align*}  
  by definition of $\dual{F}$, and where again we define the periodic point $x^\chi\in\dual{G}^\Z$ by $x^\chi_i = \chi_{i\mod n}$. Therefore we have ${|\ker(\dual{h_n})|=|X_{\dual{F},n}|}$, similarly as above.


Now, by Lemma~\ref{lem:dualkernel}, we have ${|\ker(h_n)|=|\ker(\dual{h_n})|}$ and therefore ${|X_{F,n}|=|X_{\dual{F},n}|}$. We conclude by Lemma~\ref{lem:fixper}.
\end{proof}

\begin{proof}[of Proposition~\ref{prop:solitondual}]
  Suppose $F$ has a soliton $x$, that is, ${F^p(x) = \sigma^q(x)}$ for ${p \geq 1}$ and ${q\in\Z}$. Then $x$ is a finite fixed point for the abelian CA ${\sigma^{-q}\circ F^p}$. By Lemma~\ref{lem:dualfixedpoint} we deduce that ${\dual{\sigma^{-q}\circ F^p}}$ also has a finite fixed point $y$. Using Lemma~\ref{lem:dualcompose}, we can rewrite this as ${\bigl(\dual{F}\bigr)^p(y) = \sigma^{-q}(y)}$, which shows that $y$ is actually a soliton of $\dual{F}$.

Applying the same reasoning, a soliton in ${\dual{F}}$ implies a soliton in $\dual{\dual{F}}$, and $\dual{\dual{F}} = F$ up to a canonical isomorphism by Lemma~\ref{lem:dualcompose}.
\end{proof}

\begin{example}
  Note that the smallest solitons of $F$ and $\dual{F}$ need not be
  of the same size. For example, consider the CA $F$ defined over the alphabet $\F_2^2$ defined by:
  \[F(x)_z =
  \begin{pmatrix}
    1 & 1\\
    0 & 1
  \end{pmatrix}\cdot x_z +
  \begin{pmatrix}
    0 & 0\\
    0 & 1
  \end{pmatrix}\cdot x_{z+1} .\]
  $F$ acts like the identity on any configuration whose second
  components are all $0$. In particular it admits solitons of
  rank $1$. However, its dual, obtained up to conjugacy by mirroring and transposing the coefficients:
  \[\dual{F}(x)_z =
  \begin{pmatrix}
    0 & 0\\
    0 & 1
  \end{pmatrix}\cdot x_{z-1}+ \begin{pmatrix}
    1 & 0\\
    1 & 1
  \end{pmatrix}\cdot x_z
  \]
  has no soliton of rank $1$. Indeed, take any finite configuration
  $x$ supported in $[i,j]$ for $i\leq u$ and such that $x_i\neq 0, x_j\neq 0$. Notice that:
  \[F(x)_i = \left(\begin{matrix} (x_i)_1\\(x_i)_1+(x_i)_2\end{matrix}\right)\neq 0\qquad\text{and}\qquad F(x)_{j+1} = \left(\begin{matrix} 0\\(x_j)_2\end{matrix}\right).\]
In particular, if $(x_j)_2 \neq 0$, then $x$ cannot be a soliton.
  
Now take any finite $x$ of rank $1$, assuming $x_0\neq 0$. If $(x_0)_2 \neq 0$ then $x$ is not a soliton by the previous argument. If $(x_0)_2 = 0$ then $\dual{F}(x)$ is a finite configuration of rank $1$ such that $\dual{F}(x)_0 = \left(\begin{matrix}(x_0)_1\\(x_0)_1\end{matrix}\right)$ and is not a soliton.

Finally, it is easy to check that the configuration
  \[y = \cdots\begin{pmatrix}0\\0\end{pmatrix}\begin{pmatrix}0\\1\end{pmatrix}\begin{pmatrix}1\\0\end{pmatrix}\begin{pmatrix}0\\0\end{pmatrix}\cdots\]
  is a fixed point, and therefore a soliton, of ${\dual{F}}$.
\end{example}
\section{Characterization of Randomization in Density}
\label{sec:mainthm}

%


%

We are now ready to prove our first main result, which is a combinatorial characterization of randomization in density by the absence of solitons.
\begin{theorem}
  \label{thm:main}
  Let $F$ be an abelian CA. The following are equivalent:
  \begin{enumerate}[(i)]
  \item $F$ randomizes in density any harmonically mixing measure;
  \item $F$ has no soliton;
  \item $F$ is diffusive in density;
  \item For some strongly nonuniform Bernoulli measure $\mu$, the sequence $(F^t\mu)_{t\in\N}$ admits $\lambda$ as an accumulation point.
  \end{enumerate}
\end{theorem}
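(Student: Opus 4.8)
The plan is to establish the cycle of implications $(i)\Rightarrow(iv)\Rightarrow(iii)\Rightarrow(ii)\Rightarrow(iii)\Rightarrow(i)$, using the machinery developed in the previous sections and in particular the duality results; the genuinely new content is the step $(ii)\Rightarrow(iii)$, while most of the other arrows are already essentially packaged.

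First, $(i)\Rightarrow(iv)$ is immediate: take any strongly nonuniform Bernoulli measure (these exist by Example~\ref{ex:nonuniform}, and they are harmonically mixing --- a nondegenerate Bernoulli measure works, or one invokes the argument of Proposition~\ref{prop:chidiffusive}); if $(F^t\mu)_{t\in\N}$ converges to $\lambda$ in density, then $\lambda$ is an accumulation point. For $(iv)\Rightarrow(iii)$, I would argue by contraposition along the same lines as $(iv)\Rightarrow(i)$ in the proof of Proposition~\ref{prop:chidiffusive}: if $F$ is not diffusive in density, then by the dual-CA correspondence ($F$ is character-diffusive in density iff $\dual{F}$ is diffusive in density) there is a nontrivial character $\chi$ and a set of times $(\varphi(t))_{t\in\N}$ of positive upper density along which $\rank(\chi\circ F^{\varphi(t)})$ stays bounded by some constant $C$; then for any strongly nonuniform Bernoulli measure $|F^{\varphi(t)}\mu[\chi]|\ge m^{C}>0$, so no subsequence of $(F^t\mu)$ along a density-$1$ set of times, nor any accumulation point of the full sequence obtained as a limit along times of density bounded away from zero, can equal $\lambda$ --- one has to be a little careful to phrase this so that it contradicts $\lambda$ being an accumulation point at all, which is handled by noting that the complement of $(\varphi(t))$ has upper density $<1$, so any accumulation point must be a limit along a subsequence meeting $(\varphi(t))$ infinitely often, keeping $|\dual{\cdot}[\chi]|\ge m^C$. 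Then $(iii)\Rightarrow(i)$ is exactly the content of Proposition~\ref{prop:chidiffusive}, using that $F$ diffusive in density $\iff$ $\dual{F}$ \dots wait, rather: $F$ is character-diffusive in density iff $\dual{F}$ is diffusive in density (by the lemma relating $\chi\circ F$ and $\dual F$), and character-diffusivity in density is equivalent to randomization in density of harmonically mixing measures.

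The heart of the matter is $(ii)\iff(iii)$. The implication $(iii)\Rightarrow(ii)$ is easy: a soliton $c$ has $\rank(F^t(c))=\rank(c)$ for all $t$ along the arithmetic progression of multiples of its period (and bounded rank on all of $\N$), so its orbit witnesses non-diffusivity in density, hence $F$ is not diffusive in density --- contrapositive gives $(iii)\Rightarrow(ii)$. The substantive direction is $(ii)\Rightarrow(iii)$, i.e. no soliton implies diffusivity in density. Here I would \emph{not} work with $F$ directly but with its dual. Observe that a surjective abelian CA has surjective dual (duality preserves reversibility and, more to the point, one checks surjectivity is preserved --- or one simply notes that abelian CA which are not diffusive are automatically surjective via the Garden-of-Eden argument already used in Proposition~\ref{prop:solitons}). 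The key chain is: suppose $F$ is not diffusive in density; I want to produce a soliton of $F$. By Proposition~\ref{prop:solitons} applied to $\dual{F}$ --- for which I first need that $\dual F$ is also not diffusive in density, which follows because $F$ character-diffusive in density $\iff$ $\dual F$ diffusive in density $\iff$ $\dual{\dual F}=F$ diffusive in density, so non-diffusivity of $F$ transfers to $\dual F$ --- we get that $\dual F$ admits a soliton. Then by Proposition~\ref{prop:solitondual}, $F$ admits a soliton, contradicting $(ii)$. Conversely a soliton of $F$ gives, via the orbit argument, non-diffusivity in density, closing the equivalence.

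The main obstacle, and the point that needs the most care, is marshalling the surjectivity and the direction-of-duality bookkeeping correctly: Proposition~\ref{prop:solitons} requires a \emph{surjective} CA with a quiescent state, and it produces a soliton from \emph{any} non-density-diffusive orbit of finite configurations. To apply it to $\dual F$ I must know $\dual F$ is surjective; I expect the cleanest route is to first dispatch the non-surjective case of $F$ separately (a non-surjective abelian CA is not preinjective, hence has two finite configurations with the same image, hence --- by linearity --- a nonzero finite configuration in the kernel of $F$, which after one step has rank $0$; but this does not immediately give a soliton, so instead one should observe that a non-surjective abelian CA with nonzero kernel element can still be handled: actually the cleanest statement is that $F$ surjective $\iff$ $\dual F$ surjective, which follows from Lemma~\ref{lem:dualcompose} and the Garden-of-Eden theorem since preinjectivity of $F$ corresponds to injectivity of $\dual F$ on finite configurations $\iff$ surjectivity of $\dual F$), and then run the argument of the previous paragraph. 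A secondary subtlety is in $(iv)\Rightarrow(iii)$: "$\lambda$ is an accumulation point" is a weaker hypothesis than "converges in density", so I must check that the bounded-rank-on-positive-upper-density obstruction genuinely obstructs $\lambda$ being an accumulation point, not merely density convergence --- which it does, because the set of times where $|F^t\mu[\chi]|\ge m^C$ has positive upper density and hence cannot avoid any neighbourhood-crossing subsequence converging to $\lambda$. Once these two points are pinned down, all the arrows follow from results already in the excerpt.
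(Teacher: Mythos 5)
Your overall architecture resembles the paper's, but two of your arrows are genuinely broken. The most serious is $(iv)\Rightarrow(iii)$: you extract a character $\chi$ with $\rank(\chi\circ F^{\varphi(t)})\leq C$ only on a set of times of positive upper density, and then claim this prevents $\lambda$ from being an accumulation point because ``any accumulation point must be a limit along a subsequence meeting $(\varphi(t))$ infinitely often.'' That is false: an accumulation point needs only one subsequence, and a subsequence may entirely avoid a set of positive upper density whose complement is infinite (the sequence $0,1,0,1,\dots$ has $0$ as an accumulation point even though the value is $1$ on a set of density $1/2$). To rule out $\lambda$ as an accumulation point you need $|F^t\mu[\chi]|$ bounded away from $0$ for \emph{all} $t$, and the only way to get that here is the paper's route: first produce an actual soliton $\chi$ of $\dual{F}$, so that $\rank(\dual{F}^t(\chi))$ is bounded by some $m$ at \emph{every} time (the orbit of a soliton is finite up to shifts), whence $|F^t\mu[\chi]|=|\mu[\dual{F}^t(\chi)]|\geq\varepsilon^m$ everywhere. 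In other words, $(iv)$ must be played against the soliton (the paper proves $(iv)\Rightarrow$ ``$\dual F$ has no soliton''), not against density-diffusivity directly.

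The second problem is a persistent conflation of diffusivity of $F$ with diffusivity of $\dual{F}$. The duality lemma says ``$F$ is character-diffusive in density iff $\dual{F}$ is diffusive in density''; it does \emph{not} say ``$F$ diffusive iff $\dual{F}$ diffusive.'' Randomization of $F$ (statement $(i)$) is governed via Proposition~\ref{prop:chidiffusive} by character-diffusivity of $F$, i.e.\ by diffusivity of $\dual{F}$, whereas $(iii)$ concerns diffusivity of $F$ itself. Your $(iii)\Rightarrow(i)$ therefore needs the bridge ``$F$ diffusive $\Rightarrow\dual{F}$ diffusive,'' which the two facts you cite do not supply; and inside $(ii)\Rightarrow(iii)$ you justify this transfer by the chain ``$F$ char-diffusive $\iff\dual{F}$ diffusive $\iff F$ diffusive,'' whose second equivalence is precisely the unproved claim. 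The correct bridge — and the organizing principle of the paper's proof — is: $F$ diffusive in density $\iff$ $F$ has no soliton (Proposition~\ref{prop:solitons} applied \emph{directly} to $F$, plus the easy converse; your detour through $\dual{F}$ here is unnecessary) $\iff$ $\dual{F}$ has no soliton (Proposition~\ref{prop:solitondual}) $\iff$ $\dual{F}$ diffusive in density. Once this chain is explicit, all four statements follow as in the paper. (Your observation that Proposition~\ref{prop:solitons} requires surjectivity is well taken, but you do not actually dispatch the non-surjective case, and it is not the main obstruction here.)
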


\begin{proof}
Write the dual claims of $(i), (ii), (iii), (iv)$ for $\hat F$ as $(j), (jj), (jjj), (jv)$, respectively. Then
\[ (i) \overset{Prop.~\ref{prop:chidiffusive}}{\iff} (jjj) \overset{Prop.~\ref{prop:solitons}}{\iff} (jj) \overset{Prop.~\ref{prop:solitondual}}{\iff} (ii) \overset{Prop.~\ref{prop:solitons}}{\iff} (iii), \]
so the first three claims are equivalent, and are also equivalent to $(jj)$. Moreover, it is clear that $(i)\Rightarrow (iv)$. We prove $(iv) \Rightarrow (jj)$. Suppose $(jj)$ does not hold, and let $\chi \in \dual{G}^\Z$ be a soliton for $\dual{F}$. Solitons are finite nonzero configurations, so we can consider $\chi \in \dual{G^\Z}$ to be a nontrivial character satisfying that $\rank(\dual{F}^t(\chi))$ is bounded from above by some $m$. Since $\mu$ is strongly nonuniform and Bernoulli, there exists $\varepsilon > 0$ such that $\mu[\chi'] \geq \varepsilon^{\rank{\chi'}}$. In particular, ${F^t\mu[\chi]} = \mu[\dual{F}^t(\chi)] \geq \varepsilon^m$ for all $t$. Since $\lambda[\chi] = 0$, ${F^t\mu[\chi]}$ does not have $\lambda$ as an accumulation point.

\end{proof}

Before giving some general consequences of this theorem and applying it to the commutative case, let's use it on our examples.
\begin{example}
  The CA $H_2$ admits a soliton:
  \[\cdots\begin{pmatrix}0\\0\end{pmatrix}\begin{pmatrix}0\\1\end{pmatrix}\begin{pmatrix}1\\0\end{pmatrix}\begin{pmatrix}0\\0\end{pmatrix}\cdots\]
        and therefore it is not randomizing in density.
        
        On the contrary, we show that $F_2$ has no soliton and therefore is randomizing in density (it is actually strongly randomising, see Section~\ref{sec:strongrandom}). This is an alternative proof to the result of \cite{Maass1999} where it was proven through a delicate analysis of $F_2$ using binomial coefficients and Lucas' Lemma. Our proof is illustrated in Figure~\ref{img:nosoliton}. It is easy to show by induction that: 
        \[\forall c\in\az, \forall n\in\N, \forall z\in\Z,\ F_2^{2^{n+1}}(c)_z+F_2^{2^{n}}(c)_{z+2^n}+c_z = 0\bmod 2.\]
        \begin{center}
          \begin{figure}[!h]
            \begin{tabular}{cc}
              \phantom{\hspace{.5cm}}\includegraphics{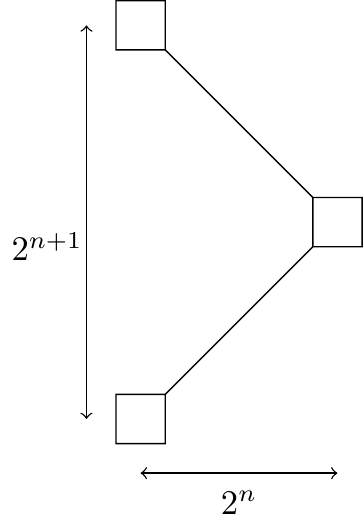}\phantom{\hspace{1cm}}& \includegraphics{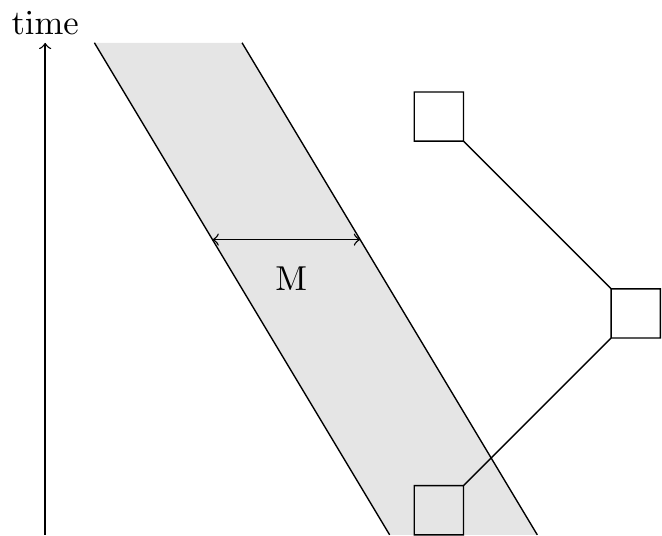}\\
              (a)&(b)\end{tabular}
            \caption{The direction of time is upward. (a) Any three cells in these relative positions in the space-time diagram of $F_2$ sum to zero (cancelling shape). (b) The space-time diagram of a soliton is zero except around a space-time line, and we can always position a cancelling shape that does not sum to zero.}\label{img:nosoliton}
          \end{figure}
        \end{center}
        
        Suppose by contradiction that $F_2$ has a soliton $c$ such that $F^p(c)=\sigma^q(c)$. Therefore there is a constant $M$ such that any nonzero cell of the space-time diagram $(F^t(c))_{t\in\N}$ is at horizontal distance at most $M$ of the real line ${L = \{(z,t) : pz+qt = 0\}}$. In other words, $F^t(c)_z\neq 0 \Rightarrow |pz+qt|<pM$. Now take $n$ such that $2^n>2M$ and any $|z|\leq M$ and distinguish three cases:
        \begin{description}
        \item[$q=0$:] Since $|z-2^n| > M$, we have $c_{z-2^n}=0$ and $F_2^{2^{n+1}}(c)_{z-2^n} = 0$, so that $F_2^{2^{n}}(c)_{z}=0$. This is true for every $z$ such that $|z|\leq M$, so $F_2^{2^{n}}(c) = 0$, a contradiction.
        \item [$q=-p$:] Assume $p=-q=1$. $c_{z+2^{n+1}} = 0$ and $F_2^{2^n}(c)_{z+2^{n+1}+2^n} = 0$, so that $F_2^{2^{n+1}}(c)_{z+2^{n+1}}=0$. At time $2^{n+1}$, this applies to every $z$ such that $|p(z+2^{n+1})-q(2^{n+1})| = |z|\leq M$, so $F_2^{2^{n+1}}(c) = 0$, a contradiction.
        \item[otherwise,] $F_2^{2^{n+1}}(c)_z = F_2^{2^{n}}(c)_{z+2^n} = 0$ when $n$ is large enough, so that $c_z = 0$ for all $|z|\leq M$, a contradiction.
        \end{description}

\end{example}

\begin{definition}[Positive expansiveness]A CA is \emph{positively expansive} if there is some finite ${W\subseteq\Z}$ such that for any pair of distinct configurations ${x,y\in\az}$:
  \[\exists t\in\N,\ \exists z\in W,\ {F^t(x)_z\neq F^t(y)_z}.\]
  More generally, for ${\alpha\in\R}$, we say that $F$ is \emph{positively expansive in direction $\alpha$} if there is some finite ${W\subseteq\Z}$ such that for any pair of distinct configurations ${x,y\in\az}$,
    \[\exists t\in\N,\ \exists z\in W,\ {F^t(x)_{z+\lceil\alpha t\rceil}\neq F^t(y)_{z+\lceil\alpha t\rceil}}\]
\end{definition}
 See \cite{Sablik08,DelacourtPST11} for further developments on directional dynamics in cellular automata.

 In the next result, for a CA $F : \az \to \az$ and a subalphabet $\mathcal B\subset\A$ such that $F(\mathcal B^\Z)\subseteq \mathcal B^\Z$, the corresponding \emph{subautomaton} of $F$ is $F' = F|_{\mathcal B^\Z}$.

\begin{corollary}
  \label{coro:randomizingstability}
  Let $F$ and $G$ be abelian CA. Then:
  \begin{itemize}
  \item if $F$ and $G$ are randomizing then so is ${F\times G}$;
  \item if $F$ is randomizing then all its subautomata are;
  \item if $F$ is randomizing and reversible, then so is $F^{-1}$;
  \item if $F$ has a direction of positive expansivity then it is randomizing;
  \end{itemize}
  where randomizing means randomizing in density any harmonically mixing measure.
\end{corollary}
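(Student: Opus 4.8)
The plan is to leverage Theorem~\ref{thm:main}, which reduces ``randomizing in density'' to the purely combinatorial statement ``has no soliton'' (or equivalently ``is diffusive in density''). So for each of the four bullets I would show that the stated operation cannot create a soliton, or that the hypothesis rules solitons out directly.

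\textbf{Product.} For $F\times G$ acting on $(\A\times\mathcal B)^\Z$, a finite configuration $c$ is a pair $(a,b)$ of finite configurations, and $(F\times G)^t(c) = (F^t(a), G^t(b))$, so $\rank((F\times G)^t(c)) \geq \max(\rank(F^t(a)), \rank(G^t(b)))$. If $c\neq\overline 0$ then at least one of $a,b$ is nonzero; using Lemma~\ref{lem:DensityDiagonalization} to intersect the two density-one sets of good times coming from diffusivity in density of $F$ and of $G$, we get a density-one set of times along which $\rank((F\times G)^t(c))\to\infty$. Hence $F\times G$ is diffusive in density, so randomizing by Theorem~\ref{thm:main}.

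\textbf{Subautomata.} If $\mathcal B\subset\A$ is a $+$-subgroup with $F(\mathcal B^\Z)\subseteq\mathcal B^\Z$, then $F' = F|_{\mathcal B^\Z}$ is an abelian CA on $\mathcal B^\Z$, and any soliton of $F'$ is in particular a nonzero finite configuration over $\A$ with $F^p(c)=\sigma^q(c)$, i.e.\ a soliton of $F$. So if $F$ has no soliton, neither does $F'$. (If one wants to allow $\mathcal B$ that is merely a subalphabet containing $0$ but not a subgroup, one should note $F'$ need not be abelian, but the soliton definition and Proposition~\ref{prop:solitons} only require surjectivity and a quiescent state; still, the cleanest reading is that $\mathcal B$ is a subgroup, and then the inclusion of solitons is immediate.) Apply Theorem~\ref{thm:main}.

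\textbf{Inverse.} If $F$ is reversible and randomizing, then by Lemma~\ref{lem:rankdecreasing} applied to $F$ there is $C>0$ with $\rank(F^{-1}(y))\geq C\rank(y)$ for all finite $y$; iterating, $\rank(F^{-t}(y)) \geq C^t\rank(y)$... that grows, but the wrong direction isn't what we need. Instead I'd argue via solitons directly: if $F^{-1}$ had a soliton $c$, with $(F^{-1})^p(c)=\sigma^q(c)$, then applying $F^p$ gives $c = F^p(\sigma^q(c)) = \sigma^q(F^p(c))$, so $F^p(c) = \sigma^{-q}(c)$, making $c$ a soliton of $F$ — contradiction. Hence $F^{-1}$ has no soliton and is randomizing by Theorem~\ref{thm:main}. (The reversibility is used to make sense of $F^{-1}$ and to know $F^{-1}$ is itself an abelian CA, via Lemma~\ref{lem:dualcompose} or directly.)

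\textbf{Positive expansivity in some direction.} This is the main obstacle and the only part needing real work. Suppose $F$ is positively expansive in direction $\alpha$, witnessed by a finite window $W$: distinct configurations are separated inside the moving window $\{z+\lceil\alpha t\rceil : z\in W\}$ at some time. Suppose for contradiction $F$ has a soliton $c\neq\overline 0$ with $F^p(c)=\sigma^q(c)$; then the whole forward orbit $(F^t(c))_t$ stays within bounded horizontal distance of the line of slope $-q/p$ (as in the $F_2$ example above), and its rank is bounded. I would compare this soliton line's direction to $\alpha$. If the soliton travels in a direction different from $\alpha$, then for large $t$ the support of $F^t(c)$ is disjoint from the window $\{z+\lceil\alpha t\rceil:z\in W\}$, so $F^t(c)$ and $\overline 0$ agree on the window for all large $t$; but $c\neq\overline 0$ and $\overline 0$ have the same forward orbit restricted to large times in the window, and one still has to rule out that they were separated at a small time — here one uses that $c$ being a soliton forces $F^t(c)$ to never re-enter the window for large $t$ combined with periodicity $F^p(c)=\sigma^q(c)$ to get the contradiction with expansivity. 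If instead the soliton travels \emph{exactly} in direction $\alpha$ (slope $-q/p = \alpha$), I would use the boundedness of the rank together with positive expansivity in that direction to code the orbit of $c$ by the finite window content, deduce eventual periodicity of $(F^t(c))$ restricted to the window, and then show two distinct finite configurations in the same ``soliton class'' collapse — contradicting expansivity. Concretely, since the window content of the orbit of $c$ (re-centered along direction $\alpha$) takes finitely many values and the orbit is $p$-periodic up to shift, expansivity in direction $\alpha$ forces $c$ to be determined by a bounded amount of data in a way incompatible with $c\neq\overline 0$ yet $\overline 0$ sharing the quiescent orbit — one pins down a length-$|W|$ discrepancy that expansivity says must show up, but the soliton's bounded-rank drift means it eventually leaves $W$'s moving frame, contradiction. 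The care needed is exactly in handling the borderline direction $\alpha = -q/p$, and in being careful that ``distinct configurations'' in the expansivity definition are arbitrary (not just finite), so one compares $c$ with $\overline 0$ which are genuinely distinct. Modulo this case analysis, the absence of solitons follows and Theorem~\ref{thm:main} finishes the proof.
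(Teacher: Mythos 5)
Your first three bullets are correct and follow the paper's own route: reduce everything via Theorem~\ref{thm:main} to the observation that the operation in question cannot create a soliton (a soliton of $F\times G$ projects to one of $F$ or of $G$; a soliton of a subautomaton, or of $F^{-1}$, is one of $F$). Your initial detour through Lemma~\ref{lem:rankdecreasing} for the inverse is a dead end, but you correctly abandon it, and your caveat that the subalphabet should be a subgroup for the subautomaton to remain abelian is a fair reading of what the paper implicitly assumes.

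The fourth bullet, however, has a genuine gap, and you have located it yourself without closing it. Positive expansivity in direction $\alpha$ only guarantees that two distinct configurations are separated inside the moving window at \emph{some} time $t\in\N$; a soliton $c$ whose support meets the window at small times and only then drifts away is perfectly compatible with that, so comparing $c$ against $\overline 0$ cannot produce a contradiction. Neither the periodicity $F^p(c)=\sigma^q(c)$ nor your proposed ``coding'' argument in the borderline case $q/p=\alpha$ repairs this: they give information about large times, while expansivity may be witnessed at time $0$. The missing idea is a one-line device: do not compare $\overline 0$ with $c$ but with a translate $\sigma^k(c)$. Since $F^t(\sigma^k(c))=\sigma^k(F^t(c))$ and the support of $F^t(c)$ stays within bounded horizontal distance of a line of rational slope determined by $p$ and $q$, choosing $|k|$ large and with the sign dictated by the relative drift between that slope and $\alpha$ places the \emph{entire} orbit of $\sigma^k(c)$ outside the moving window $W+\lceil\alpha t\rceil$ for every $t\in\N$ (when the drift is zero, any sign works since the horizontal offset is then constant up to bounded fluctuations). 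Thus $F^t(\sigma^k(c))$ and $F^t(\overline 0)$ agree on the window at all times, contradicting expansivity for this pair with no case analysis at all. Without this (or an equivalent) device, your argument for the last bullet does not go through.
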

\begin{proof}
  This corollary follows from Theorem~\ref{thm:main} by the following elementary observations on solitons:
  \begin{itemize}
  \item a soliton in ${F\times G}$ implies a soliton in either $F$ or $G$;
  \item a soliton in a subautomaton of $F$ is a soliton for $F$;
  \item a soliton for $F^{-1}$ is a soliton for $F$;
  \item a positively expansive CA cannot admit a soliton. Indeed, take a CA $F$ with a direction of positive expansiveness $\alpha$ and assume for the sake of contradiction that it admits a soliton $c$: $F^p(c) = \s^q(c)$. For any finite $W\subset \Z$, take the two distinct configurations $x=0$ and $y=\s^k(c)$ for $|k|$ large enough and ${\textrm{sign}(k)=\textrm{sign}(q/p-\alpha)}$, and check that ${F^t(x)_{z+\lceil\alpha t\rceil}= F^t(y)_{z+\lceil\alpha t\rceil} = 0}$ for every $t\in\N$ and $z\in W$.
  \end{itemize}\end{proof}

\begin{remark}
  A CA with local rule ${f:\A^m\rightarrow\A}$ is bipermutive if $m \geq 2$ and the maps ${x\mapsto f(x,a_1,\ldots,a_{m-1})}$ and ${x\mapsto f(a_1,\ldots,a_{m-1},x)}$ are permutations of $\A$ for any ${a_1,\ldots,a_{m-1}\in\A}$. Since bipermutivity implies the existence of a direction of positive expansivity \cite{Cattaneoetal}, the above corollary implies that any bipermutive abelian CA randomizes in density any harmonically mixing measure. 
This generalizes Theorem 9 of \cite{PivatoYassawi2}, where the authors consider abelian CA of the form 
  \[F = \sum_{i\in\Ne}\confendo_i\circ\sigma^i\]
  where ${|\Ne|\geq 2}$ and $\confendo_i$ are commuting automorphisms. We do not need this hypothesis here, and for instance we prove that the following CA over ${\F_2^2}$ is randomizing in density: 
  \[F(c)_z =
\begin{pmatrix}
        1 & 1\\
        0 & 1
      \end{pmatrix}\cdot c_z + 
      \begin{pmatrix}
        1 & 0\\
        1 & 1
      \end{pmatrix}\cdot c_{z+1}.\]
\end{remark}

\section{Other forms of randomization}\label{sec:other}
In this section we consider other forms of randomization that have been less studied in the literature. First we prove that, in the case of abelian CAs whose coefficients are commuting endomorphisms, only randomization in density can happen. Then we provide examples of abelian CA that exhibit strong randomization and randomization for cylinders up to some fixed length.

\subsection{Abelian CAs with commuting coefficients}

The case of abelian CAs with commuting coefficients is in many regards similar to the case of scalar coefficients. These CAs have more rigidity in their time evolution than general abelian CAs: the image of a single cell at time $t$ can be determined directly through the use of the binomial theorem and modular arithmetic of binomial coefficients. In particular, when $t$ is some power of the order of the group, the number of bijective dependencies is bounded, which explains why these CAs cannot randomize strongly.

\begin{lemma}
  \label{lem:binomialeries}
  Let $p$ be a prime number and $l\geq 0$.  Let
  $(\mathcal{X},+,\times)$ be a commutative ring of characteristic
  $p^l$, \textit{i.e.} such that for any $X\in \mathcal{X}$:
  \[p^l\cdot X = 0\] where $0$ is the neutral element for $+$.

  Then, for any $n\geq 0$ and any elements
  $X_i\in\mathcal{X}$ , ${1\leq i\leq k}$, we have this equality in $\mathcal{X}$:
  \[\left(\sum_{i=1}^kX_i\right)^{p^{n+l-1}} =  \left(\sum_{i=1}^kX_i^{p^n}\right)^{p^{l-1}}\]
\end{lemma}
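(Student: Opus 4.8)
The plan is to prove this by induction on $n$, using the characteristic-$p$ Freshman's Dream as the engine. The base case $n=0$ reads $\left(\sum X_i\right)^{p^{l-1}} = \left(\sum X_i\right)^{p^{l-1}}$, which is trivial. So the content is the inductive step and, really, a single clean lemma underneath it: in a commutative ring of characteristic $p^l$, one has $(A+B)^{p^l} = A^{p^l} + B^{p^l}$ — more precisely, I will want the slightly stronger-looking but equivalent fact that $\left(\sum_{i=1}^k X_i\right)^{p} \equiv \sum_{i=1}^k X_i^p$ modulo $p$ (i.e. the difference lies in $p\mathcal{X}$), and then iterate. Actually, the cleanest route is: first establish that for any $A, B \in \mathcal{X}$ and any $j \geq 1$, $(A+B)^{p^j} - A^{p^j} - B^{p^j} \in p^j \mathcal{X}$. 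This follows because $(A+B)^p = A^p + B^p + p\cdot(\text{stuff})$ by the binomial theorem (all intermediate binomial coefficients $\binom{p}{r}$ for $0<r<p$ are divisible by $p$), and then one raises to successive $p$-th powers, tracking that $p$-divisibility compounds: if $C \in p\mathcal{X}$ then... well, one has to be a little careful, because $C \in p\mathcal X$ and $D$ arbitrary gives $(D+C)^p = D^p + \binom{p}{1}D^{p-1}C + \cdots$, and each term past $D^p$ has either a factor of $p$ from the binomial coefficient or a high power of $C$; since $C \in p\mathcal X$, one gets $(D+C)^p \equiv D^p \pmod{p^2\mathcal X}$. Iterating this kind of estimate is the main technical engine.

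Concretely, I would prove the following sublemma first: \emph{for all $j \geq 1$ and all $A, B \in \mathcal X$, $(A+B)^{p^j} \equiv A^{p^j} + B^{p^j} \pmod{p^j \mathcal X}$}, by induction on $j$. For $j=1$ it is the binomial theorem plus $p \mid \binom{p}{r}$ for $0 < r < p$. For the step, write $(A+B)^{p^{j+1}} = \left((A+B)^{p^j}\right)^p = (A^{p^j} + B^{p^j} + p^j R)^p$ for some $R \in \mathcal X$; expand by the multinomial/binomial theorem and observe that every term other than $(A^{p^j})^p = A^{p^{j+1}}$ and $(B^{p^j})^p = B^{p^{j+1}}$ carries either a factor $p$ (from a binomial coefficient $\binom{p}{r}$, $0<r<p$) times $\geq p^j$ from an $R$-factor, or a factor $(p^j R)^2 \in p^{2j}\mathcal X \subseteq p^{j+1}\mathcal X$; in all cases the term is in $p^{j+1}\mathcal X$. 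By $k$-fold application one gets the analogous statement for sums of $k$ terms: $\left(\sum_{i=1}^k X_i\right)^{p^j} \equiv \sum_{i=1}^k X_i^{p^j} \pmod{p^j \mathcal X}$.

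Now the main statement follows by taking $j = l-1$... wait — careful: if $l = 0$ or $l = 1$ the exponent $p^{l-1}$ is $p^{-1}$ or $p^0$; I should handle $l = 0$ (characteristic $1$, the zero ring, everything is $0$, statement trivial) and $l = 1$ (then $p^{l-1} = 1$ and the claimed identity is $\left(\sum X_i\right)^{p^n} = \left(\sum X_i^{p^n}\right)$, which is exactly the sublemma with $j = n$ since $p^n \mathcal X = 0$ in characteristic $p$) separately, and then assume $l \geq 2$. For $l \geq 2$: apply the sublemma with $j = n + l - 1 \geq l - 1 \geq 1$ to get $\left(\sum X_i\right)^{p^{n+l-1}} \equiv \sum X_i^{p^{n+l-1}} \pmod{p^{n+l-1}\mathcal X}$, and separately apply it with $j = l-1$ to the elements $X_i^{p^n}$ to get $\left(\sum X_i^{p^n}\right)^{p^{l-1}} \equiv \sum X_i^{p^{n+l-1}} \pmod{p^{l-1}\mathcal X}$. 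Since $\mathcal X$ has characteristic $p^l$, we have $p^{n + l - 1}\mathcal X \subseteq p^{l-1}\mathcal X$, and in fact I want both error terms to vanish — here is where I use characteristic $p^l$ decisively: $p^{l-1}\mathcal X$ need not be zero, so I cannot conclude the two sides are equal directly from the two congruences modulo $p^{l-1}\mathcal X$. The honest fix is to prove a sharper sublemma: $\left(\sum X_i\right)^{p^{j}} \equiv \sum X_i^{p^j} \pmod{p^{j+1}\mathcal X}$ for $j \geq 1$ — inspection of the induction above actually delivers the $p^{j+1}$ bound for $j \geq 1$ (the $j=1$ binomial term gives $p^1$, but re-squaring immediately upgrades; more carefully, one proves $(A+B)^{p^j} - A^{p^j} - B^{p^j} \in p\mathcal X$ always and $\in p^j\mathcal X$, and combining with the multinomial structure for $k$ terms one gets the needed slack). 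Then with $j = l-1$, the error modulus is $p^l \mathcal X = 0$, both congruences become equalities, and the two right-hand sides coincide, proving the theorem.

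The main obstacle, as the last paragraph shows, is the bookkeeping of $p$-adic valuations: getting the error term down to exactly $p^l\mathcal X = 0$ rather than the a priori weaker $p^{l-1}\mathcal X$. This is not deep — it comes from the fact that $(A+B)^p - A^p - B^p$ already lies in $p\mathcal X$ and each further $p$-th power either multiplies the valuation by $p$ (when squaring an error term) or adds $1$ (when a fresh binomial coefficient appears), so after raising to the $p^{l-1}$-th power the valuation of every cross term is at least $l$ — but it requires writing the multinomial expansion carefully and being honest about which terms contribute what, so I expect that to be where the real work (and the real risk of an off-by-one) lies.
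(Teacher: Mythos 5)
There is a genuine gap, and it is fatal to the strategy rather than an off-by-one. Your engine, the sublemma $(A+B)^{p^j}\equiv A^{p^j}+B^{p^j}\pmod{p^j\mathcal X}$, is already false for $j=2$: take $p=2$, $A=B=1$ (say in $\Z$ or $\Z/4\Z$); then $(A+B)^{4}-A^{4}-B^{4}=16-2=14$, whose $2$-adic valuation is $1$, not $2$. The induction step breaks exactly where you wave at the multinomial expansion: in $(A^{p^j}+B^{p^j}+p^jR)^p$ the terms $\binom{p}{r}(A^{p^j})^r(B^{p^j})^{p-r}$ with $0<r<p$ involve no factor of $R$ at all and carry only a \emph{single} factor of $p$ from the binomial coefficient, so the error never climbs above $p\mathcal X$. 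A fortiori the ``sharper sublemma'' modulo $p^{j+1}\mathcal X$ is false. Moreover, the endgame cannot be repaired by any valuation bookkeeping: your plan is to show both sides of the identity equal $\sum_i X_i^{p^{n+l-1}}$ modulo $p^l\mathcal X=0$, i.e.\ equal to it on the nose, but the lemma's right-hand side is genuinely different from that fully-split sum. In $\Z/4\Z$ with $p=2$, $l=2$, $n=1$, $X_1=X_2=1$, the lemma's two sides are both $0$, while $\sum_i X_i^{p^{n+l-1}}=1^4+1^4=2\neq 0$. So no strengthening of the congruence can make both sides collapse to that middle quantity.

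The paper's proof keeps only what is true: the Frobenius identity holds merely modulo $p$, giving $\bigl(\sum_i X_i\bigr)^{p^n}=\sum_i X_i^{p^n}+pY'$ for some $Y'$. The key step is then to raise this to the power $p^{l-1}$ \emph{without} splitting the outer power: one expands binomially in the error term $pY'$ against the intact factor $\bigl(\sum_i X_i^{p^n}\bigr)^{p^{l-1}}$, so the cross terms are $\binom{p^{l-1}}{j}p^j(Y')^j\bigl(\sum_i X_i^{p^n}\bigr)^{p^{l-1}-j}$ for $j\geq 1$. By Kummer's theorem $v_p\bigl(\binom{p^{l-1}}{j}\bigr)=l-1-v_p(j)$, so each such term has valuation at least $l-1+j-v_p(j)\geq l$ and vanishes in characteristic $p^l$. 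The extra factor of $p^j$ coming from the $j$-th power of the error, combined with the large binomial coefficient, is precisely the mechanism that reaches valuation $l$; trying to front-load that gain into a stronger Frobenius congruence, as you do, asks for something that is simply not true in these rings.
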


\begin{proof}
  First by the binomial theorem we have:
  \[\left(\sum_{i=1}^kX_i\right)^{p^n} = \left(X_1+\sum_{i=2}^kX_i\right)^{p^n} = X_1^{p^n} +
  \left(\sum_{i=2}^kX_i\right)^{p^n} + p \cdot Y\] for some $Y$ because, by Kummer's theorem, $p$ divides ${p^n \choose i}$ for ${0<i<p^n}$.  By a direct
  induction we deduce:
  \[\left(\sum_{i=1}^kX_i\right)^{p^{n}} = \sum_{i=1}^kX_i^{p^n} +
  p \cdot Y'\] for some $Y'$.  Now, applying again the binomial theorem
  we get:
  \begin{align*}
    \left(\sum_{i=1}^kX_i\right)^{p^{n+l-1}} &=
    \left(\sum_{i=1}^kX_i^{p^n}
      + p \cdot Y'\right)^{p^{l-1}}\\
    &= \left(\sum_{i=1}^kX_i^{p^{n}}\right)^{p^{l-1}} + \sum_{j=1}^{p^{l-1}} {{p^{l-1}}\choose j}\cdot p^{j} \cdot (Y')^j \cdot \left(\sum_{i=1}^kX_i^{p^{n}}\right)^{p^{l-1}-j}\\
    &= \left(\sum_{i=1}^kX_i^{p^{n}}\right)^{p^{l-1}} + p^{l-1} \cdot Z
  \end{align*}
  because by Kummer's theorem $p^l$ divides ${p^{l-1}\choose j}\cdot p^{j}$ for any $1\leq j\leq p$. Then the desired equality is shown since ${p^l\cdot Z' =0}$.
\end{proof}

Now we prove that abelian CA cannot randomize strongly, and cannot randomize some cylinders without randomizing in density.

\begin{theorem}
  \label{thm:commutingcase}
  There is no strongly randomizing abelian CA with commuting endomorphisms. Moreover, for any abelian group $G$, there exists an ${N\in\N}$ such that, for any abelian CA $F$ over $G$ with commuting endomorphisms, the following are equivalent:
  \begin{enumerate}[(i)]
  \item $F$ randomizes in density;
  \item ${\rank\bigl(F^N(c)\bigr)\geq 2}$ for any finite configuration
    $c$ of rank $1$;
  \item $F$ randomizes in density on cylinders of length $1$,
  \end{enumerate}
  where as usual the class of initial measures is the set of harmonically mixing measures.
\end{theorem}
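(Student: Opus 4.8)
The plan is to reduce the whole statement to a single combinatorial fact about commuting-coefficient CA: there is a universal exponent $N$ (depending only on $|G|$) such that $F^N$ acts on each single cell ``diagonally'', in the sense that $\Delta_F(N,z)$ is determined by a much simpler data than the general iterate, and in particular the number of bijective dependencies of $F^N$ is controlled. First I would set up the algebraic framework: writing $F = \sum_{i\in\Ne}\confendo_i\circ\sigma^i$ with the $\alfaendo_i$ pairwise commuting, we work in the commutative ring $\mathcal X = (\text{End}(G))[\sigma,\sigma^{-1}]$ generated by the $\alfaendo_i$ and $\sigma$; then $F = \sum_i \alfaendo_i \sigma^i$ is an element of $\mathcal X$, and $F^t$ is computed by the binomial theorem in $\mathcal X$. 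The group $G$ being finite, $\text{End}(G)$ (or the subring generated by the $\alfaendo_i$) has characteristic $p_1^{l_1}\cdots p_r^{l_r}$; treating each prime separately and using a CRT-type decomposition, Lemma~\ref{lem:binomialeries} applies with $\mathcal X$ the relevant local component. I would choose $N$ to be a suitable power $p^{n+l-1}$ (a common value working for all $F$ over $G$ exists because the characteristic of the endomorphism ring is bounded by $|G|$-data, and we can take $n$ as large as convenient, e.g. large enough that $p^n$ exceeds the radius bound; actually for the ``no strong randomizer'' part we let $n\to\infty$).

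Next, the key consequence of Lemma~\ref{lem:binomialeries}: for $t = p^{n+l-1}$ we get $F^t = \bigl(\sum_i \alfaendo_i^{p^n}\sigma^{in}\bigr)^{p^{l-1}}$ --- wait, more precisely $F^t = \bigl(\sum_i (\alfaendo_i\sigma^i)^{p^n}\bigr)^{p^{l-1}} = \bigl(\sum_i \alfaendo_i^{p^n}\sigma^{ip^n}\bigr)^{p^{l-1}}$. Expanding the outer $p^{l-1}$-th power, $F^t$ is a sum of at most $\binom{p^{l-1}+k-1}{k-1}$ monomials, each of the form $(\text{product of }\alfaendo_i^{p^n})\cdot\sigma^{(\text{multiple of }p^n)}$; the shifts involved are all multiples of $p^n$, which is $\geq$ the radius of $F$ for $n$ large. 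Hence $\Delta_F(t,z)\neq 0$ for at most a bounded number $B = B(|G|)$ of values of $z$, and moreover these nonzero $z$ are spaced at least $p^n$ apart, i.e. every nonzero dependency is $(p^n-1)$-isolated. For the strong-randomization impossibility, combine this with Proposition~\ref{prop:rankdeps} run the other way: if $F$ strongly randomizes then by Proposition~\ref{prop:chidiffusive} it is strongly character-diffusive, so for a rank-$1$ character $\chi_0$ we need $\rank(\chi_0\circ F^t)\to\infty$; but $\rank(\chi_0\circ F^t)\leq \rank(\chi_0)\cdot d(t) = d(t) \leq B$ for $t = p^{n+l-1}$, and there are infinitely many such $t$ (as $n\to\infty$), giving a subsequence along which the rank stays bounded --- contradiction. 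Since this argument works for every commuting-coefficient abelian CA over every finite $G$, no such CA is strongly randomizing.

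For the three-way equivalence, fix $N = p^{n+l-1}$ for one fixed large $n$ (large enough that the above isolation/bound statements hold); this $N$ depends only on $G$. The implication $(i)\Rightarrow(iii)$ is immediate from the definitions (randomization in density implies randomization in density on cylinders of length $1$). For $(iii)\Rightarrow(ii)$ I would argue by contraposition: if some rank-$1$ configuration $c$ has $\rank(F^N(c))=1$, then dualizing (Proposition~\ref{prop:solitondual} / Lemma~\ref{lem:dualfixedpoint} machinery, or directly) one finds a rank-$1$ character $\chi$ with $\rank(\chi\circ F^N)=1$; iterating, $\rank(\chi\circ F^{kN})$ stays equal to $1$ for all $k$ because the rank-$1$ situation reproduces itself (a single bijective dependency of $F^N$ composed with itself stays a single bijective dependency, using commutativity so that $(F^N)^k = F^{kN}$ and the analogue of Lemma~\ref{lem:binomialeries} at the level of $F^N$). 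Then $F^{kN}\mu[\chi]$ stays bounded away from $0$ for any strongly nonuniform Bernoulli $\mu$, so $(F^t\mu)$ cannot converge to $\lambda$ even in density on cylinders of length $1$, contradicting $(iii)$. Finally $(ii)\Rightarrow(i)$: suppose every rank-$1$ configuration $c$ has $\rank(F^N(c))\geq 2$. I claim $F$ has no soliton, so Theorem~\ref{thm:main} gives $(i)$. If $F$ had a soliton then so would $\dual F$ (Proposition~\ref{prop:solitondual}), i.e. $\dual F$ has a finite nonzero configuration of bounded rank along all times; but the commuting structure passes to $\dual F$, and using Lemma~\ref{lem:binomialeries} for $\dual F$ one shows that along the sub-times $kN$ the rank of any nonzero finite configuration is a non-decreasing function that strictly increases as long as it is finite --- concretely, the hypothesis ``rank-$1$ $\Rightarrow$ rank-$\geq 2$ after $N$ steps'' upgrades (via the diagonal form of $F^N$ and superposition of the $p^{l-1}$ expanded monomials acting on disjoint far-apart blocks) to ``any finite configuration has strictly larger rank after $N$ steps, until the rank is unbounded'', contradicting boundedness. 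I expect this last step --- turning the rank-$1$ hypothesis into unbounded rank growth for all configurations --- to be the main obstacle, since one must control possible cancellations between the $p^{l-1}$ shifted copies; the fix is to use that the shifts are multiples of $p^n \gg$ radius, so the copies act on essentially disjoint regions and their contributions to the support cannot cancel, reducing everything to the single-block rank-$1$ statement.
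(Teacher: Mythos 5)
Your first part (no strong randomization) and your algebraic setup follow the paper's proof: reduce to $p$-groups, apply Lemma~\ref{lem:binomialeries} to write $F^{p^{n+l-1}}$ as a sum of boundedly many terms whose shifts are multiples of $p^n$, and conclude via the upper bound of Proposition~\ref{prop:rankdeps}. That part is fine. The equivalence part, however, has a genuine gap in $(iii)\Rightarrow(ii)$: you pass from ``some rank-$1$ configuration $c$ has $\rank(F^N(c))=1$'' to ``some rank-$1$ character $\chi$ has $\rank(\chi\circ F^N)=1$'' by ``dualizing''. But the duality results you cite (Lemma~\ref{lem:dualfixedpoint}, Proposition~\ref{prop:solitondual}) preserve only the \emph{existence} of solitons, not their rank; the paper exhibits, right after Proposition~\ref{prop:solitondual}, a CA with a rank-$1$ soliton whose dual has no rank-$1$ soliton. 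A rank-$1$ soliton of $F$ says nothing a priori about length-$1$ cylinders, which are governed by rank-$1$ characters, i.e.\ rank-$1$ configurations of $\dual{F}$. The paper's fix is to run the whole trichotomy (killed / rank $1$ / rank $\geq 2$ at time $N$) on $\dual{F}$ itself, which also has commuting coefficients: $(iii)$ rules out the first two alternatives for $\dual{F}$, the third implies $\dual{F}$ has no soliton at all, and only then is Proposition~\ref{prop:solitondual} used to transfer back to $F$. You also leave untreated the degenerate case $\rank(F^N(c))=0$, which needs its own argument (the paper shows non-surjectivity forces some letter $g$ to have $F^{-t}([g])=\emptyset$ for large $t$, using the $p^n$-spacing of dependencies plus linearity).

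A second missing ingredient is the kernel stabilization that justifies a single $N$ depending only on $G$: the paper takes $N=p^{n_0+l-1}$ with $p^{n_0}>|G|$ precisely so that $\ker(h^{p^{n_0}})=\ker(h^{p^{n}})$ for every endomorphism $h$ of $G$ and every $n\geq n_0$. This is what lets a hypothesis at the \emph{single} time $N$ propagate to all times $p^{n+l-1}$ with $n\to\infty$; without it neither the soliton extraction in the rank-$1$ case nor the no-soliton conclusion in the rank-$\geq 2$ case goes through. Relatedly, your plan for $(ii)\Rightarrow(i)$ of iterating ``rank strictly increases after $N$ steps'' does not work as stated: the disjointness of the shifted blocks requires $p^n$ to exceed the diameter of the current configuration, which grows under iteration, so a fixed $N$ cannot be reused (and the detour through $\dual{F}$ there is misplaced, since hypothesis $(ii)$ concerns $F$). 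The paper instead argues in one shot: for any finite $d\neq 0$ and all large $n$, $F^{p^{n+l-1}}(d)$ contains two nonzero cells at distance at least $p^n-\rank(d)$, so the diameter along the orbit is unbounded, no soliton exists, and Theorem~\ref{thm:main} gives $(i)$.
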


\begin{proof}
  If $p$ is a prime number, a \emph{$p$-group} is a group $G'$ where the order of every element $g \in G'$ is a power of $p$. By the decomposition theorem for finite abelian groups, they can be written as a direct product of finite $p$-groups for distinct primes $p$. Using this fact together with Corollary~\ref{coro:randomizingstability}, it is enough to consider the case where $G$ is a $p$-group (because an abelian CA on ${G_1\times G_2}$ where $G_1$ and $G_2$ have relatively prime orders is a Cartesian product of abelian CAs on $G_1$ and $G_2$ respectively).

  As usual, we write $F$ as
  \[F = \sum_{i\in \Ne} \confendo_i\circ\sigma^i.\]
  Consider the commutative ring generated by the
  ${\confendo_i}$ and the shift map under addition and
  composition. This ring has characteristic $p^l$ for some $l$ because
  we considered a $p$-group as the alphabet. By Lemma~\ref{lem:binomialeries}, we get
  \begin{equation}\label{eq:commuting}\forall n\in\N,\ F^{p^{n+l-1}} = \left(\sum_{i\in
      \Ne}(\confendo_i)^{p^n}\circ\sigma^{ip^n}\right)^{p^{l-1}} = \sum_{j\in \Ne'}(\gamma_j)^{p^n}\circ\sigma^{jp^n},\end{equation}
  where
  \[\Ne' = \{n_{1}+\cdots +n_{p^{l-1}} : n_{i}\in \Ne\}\]
  and each $\gamma_j$ is a sum of compositions of some ${\alfaendo_i}$ that do not depend on $n$. The number of terms in the right-hand expression
  is bounded independently of $n$, so the number of dependencies of
  $F^t$ is bounded on an infinite sequence of times and so $F$ cannot
  be strongly randomizing by Proposition~\ref{prop:rankdeps} (it
  cannot even strongly randomize cylinders of size 1).\bigskip

  For the second part of the proposition, consider ${N=p^{n_0+l-1}}$ for
  some $n_0$ such that ${p^{n_0}>|G|}$. This choice of $N$ guaranties
  that, for any endomorphism $h$ of $G$, we have
  ${\ker(h^{p^{n_0}}) = \ker(h^{p^{n}})}$ for any ${n\geq n_0}$
  (because the sequence ${\ker(h^i)}$ increases strictly until it
  stabilizes). We have the following alternatives:

  \begin{enumerate}[(a)]
  \item \emph{There is $c$ of rank $1$ such that ${\rank\bigl(F^{N}(c)\bigr) = 0}$}. In particular, $F$ is not surjective. We claim that there is some ${g\in G}$ such that ${F^{-t}([g])}$ is empty for any large enough $t$. From the claim we deduce that $F$ doesn't randomize cylinders of length $1$ starting from the uniform Bernoulli measure. We now prove the claim: suppose that ${F^{-t}([g])}$ is never empty, whatever $g$ and $t$. Then consider any finite word ${g_1\cdots g_k}$ and take $n$ large enough so that ${p^n>k}$. By Equation~\ref{eq:commuting} and by choice of $n$ we get that for any configuration $c$, ${F^{p^{n+l-1}}(c)_i}$ depends only on ${c_{|i+V_n}}$ where ${V_n\subseteq \Z}$ is such that ${1+V_n}$, ${2+V_n}$,..., ${k+V_n}$ are two-by-two disjoint sets. Therefore from the assumption that ${F^{-p^{n+l-1}}([g_i])\neq\emptyset}$ for any ${1\leq i\leq k}$ and by linearity of $F$ we deduce that there is some $c$ with ${F^{p^{n+l-1}}(c)\in[g_1\cdots g_k]}$. Since the choice of ${g_1\cdots g_k}$ was arbitrary, we proved that $F$ is surjective which is a contradiction.
  \item \emph{${\rank\bigl(F^{N}(c)\bigr) > 0}$ for any $c$ of rank $1$, but there is $d$ of rank $1$ such that ${\rank\bigl(F^{N}(d)\bigr) = 1}$.} By Equation~\ref{eq:commuting}, there is some ${g\in G}$ and a ${j\in \Ne'}$ such that ${g\not\in\ker(\gamma_j^{p^{n_0}})}$ but ${g\in \ker(\gamma_{j'}^{p^{n_0}})}$ for all ${j'\neq j}$. As said before, the choice of $n_0$ ensures that for any ${n\geq n_0}$ we have ${g\not\in\ker(\gamma_j^{p^{n}})}$ but ${g\in \ker(\gamma_{j'}^{p^{n}})}$ for all ${j'\neq j}$. Hence, using the formula for ${F^{p^{n+l-1}}}$, we deduce ${\rank\bigl(F^{p^{n+l-1}}(d)\bigr) = 1}$ for any ${n\geq n_0}$. In that case $F$ admits a soliton of size $1$.
  \item \emph{For any $c$ of rank $1$, ${\rank\bigl(F^{N}(c)\bigr) \geq 2}$.} For the same reason as in the previous case we deduce ${\rank\bigl(F^{p^{n+l-1}}(c)\bigr) \geq 2}$ for any ${n\geq n_0}$. But Equation~\ref{eq:commuting} above shows that the nonzero cells in ${F^{p^{n+l-1}}(c')}$ belong to the set ${\Ne'_n = \{jp^n : j\in \Ne'\}}$ for any $c'$ of rank $1$. We deduce that for any $d$ of rank $m$ and $n$ large enough, ${F^{p^{n+l-1}}(d)}$ contains $2$ nonzero cells distant from each other by at least ${p^n-m}$ cells. Therefore $F$ does not have any soliton and it randomizes harmonically mixing measures in density by Theorem~\ref{thm:main}.
  \end{enumerate}

  To sum up, we have $(c) \Rightarrow (i)$ while $(a)$ and $(b)$ are both incompatible with $(i)$. Since $(c)$ corresponds to $(ii)$, we have shown that $(i)\Leftrightarrow (ii) (\Leftrightarrow (c))$. Since clearly $(i)\Rightarrow (iii)$, we now prove that $(iii)\Rightarrow (c)$.

  If $F$ randomizes in density on cylinders of length $1$, then $F$ is character-diffusive on characters or rank $1$ (by Proposition~\ref{prop:rankdeps}) which means that $\dual{F}$ has no soliton of size $1$. It is straightforward to check by Equation~\ref{eq:defdual} that an abelian CA with commuting endomorphisms has a dual with commuting endomorphisms, and therefore the above alternative (a/b/c) applies also to $\dual{F}$. In other words, $\dual{F}$ satisfies $(c)$, which implies that it admits no solitons. This means in turn that $F$ must satisfy $(c)$ as well. The theorem follows.
\end{proof}

\begin{remark}

  In \cite{PivatoYassawi2} the authors consider abelian CA with integer coefficients (\textit{i.e.} endomorphisms of the form ${a\mapsto n\cdot a}$); such a CA is called \emph{proper} if for any prime divisor $p$ of the order of the alphabet there are at least $2$ coefficients not divisible by $p$. Theorem~6 of \cite{PivatoYassawi2} shows that proper CA are randomizing in density. This is a particular case of the above theorem. Indeed, if $F$ is proper and taking $N$ from the theorem, it is easy to check that $F^N$ is proper and that this is equivalent to $\rank(F^N(c))\geq 2$ for any finite configuration of rank $1$.

\end{remark}

\subsection{Strong Randomization}
\label{sec:strongrandom}

We now give examples of strongly randomizing abelian
cellular automata. They are all defined over the alphabet
$\A = {\Z_p^2}$ where $p$ is a prime number. In this
section we denote by $\pi_1$ and  $\pi_2$ the projections on the first
and second component of the alphabet respectively.


 We also denote ${n\cdot g =
  \underbrace{g+\cdots+ g}_{n\text{ times}}}$ and
$\overline{0}$ the neutral element of the group
${\bigl(\A^\Z,+\bigr)}$ (\textit{i.e.} the configuration
everywhere equal to $(0,0)$).

\begin{lemma}
  \label{lem:binomialmodp}
  If two abelian CA $F$ and $G$ over $\A^\Z$ commute, then for any $n\geq 0$
  we have:
  \[(F+ G)^{p^n}=F^{p^n}+ G^{p^n}\]
\end{lemma}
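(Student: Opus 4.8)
The plan is to reduce the claim to a statement about a commutative ring and then invoke the binomial theorem together with Kummer's theorem, exactly in the spirit of Lemma~\ref{lem:binomialeries}. First I would observe that since $F$ and $G$ are abelian CA over $\A^\Z$ and they commute, they generate a commutative subring $\mathcal X$ of the ring of all abelian CA over $\A^\Z$ (with addition $F+G$ and multiplication given by composition). Since the alphabet $\A = \Z_p^2$ is a $p$-group, every element of $\A^\Z$ has additive order dividing $p$, hence $p\cdot\Phi = 0$ for every abelian CA $\Phi$; in particular $\mathcal X$ has characteristic $p$ (it is an algebra over $\F_p$).

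Next, within this commutative ring of characteristic $p$, I would apply the binomial theorem to $(F+G)^{p^n}$. Expanding gives $(F+G)^{p^n} = \sum_{k=0}^{p^n}\binom{p^n}{k}F^k G^{p^n-k}$, and by Kummer's theorem (or Lucas' Lemma) $p \mid \binom{p^n}{k}$ for all $0 < k < p^n$. Since the ring has characteristic $p$, all the middle terms vanish, leaving exactly $(F+G)^{p^n} = F^{p^n} + G^{p^n}$. This is really just the Frobenius-type identity in a commutative $\F_p$-algebra, applied iteratively (or directly at the $p^n$-th power). One can either do the single step $(F+G)^p = F^p + G^p$ and induct on $n$, using commutativity of $F^p$ and $G^p$ at each stage, or apply Kummer's theorem directly to the exponent $p^n$; I would state it directly for $p^n$ to match the phrasing of the lemma.

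The only genuine point requiring care — and thus the main obstacle, though a mild one — is justifying that the relevant objects live in a commutative ring so that the binomial theorem is legitimately applicable: one must note that commutativity of $F$ and $G$ makes all monomials $F^k G^{p^n-k}$ well-defined and that composition distributes over pointwise addition of abelian CA (which holds because each $F, G$ is a group endomorphism of $\A^\Z$). Once that framework is in place, the computation is immediate. I would present the proof in two short sentences: set up the commutative characteristic-$p$ ring, then quote Kummer's theorem to kill the cross terms.

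\begin{proof}
Since $F$ and $G$ are abelian CA over $\A^\Z$, they are endomorphisms of $(\A^\Z,+)$; composition of such endomorphisms distributes over the pointwise addition $F+G$, and by hypothesis $F$ and $G$ commute, so they generate a commutative ring $\mathcal X$ (with addition $+$ and multiplication $\circ$). As $\A = \Z_p^2$ is a $p$-group, $p\cdot x = 0$ for every $x\in\A^\Z$, hence $p\cdot\Phi = 0$ for every $\Phi\in\mathcal X$; that is, $\mathcal X$ has characteristic $p$. By the binomial theorem in $\mathcal X$,
\[(F+G)^{p^n} = \sum_{k=0}^{p^n}\binom{p^n}{k}\, F^k\circ G^{p^n-k}.\]
By Kummer's theorem, $p$ divides $\binom{p^n}{k}$ for every $0<k<p^n$, so all these terms vanish in $\mathcal X$, leaving $(F+G)^{p^n} = F^{p^n}+G^{p^n}$.
\end{proof}
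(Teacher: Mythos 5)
Your proof is correct and follows essentially the same route as the paper's: both reduce to the binomial theorem in the commutative ring generated by $F$ and $G$ (which has characteristic $p$ since $\A=\Z_p^2$) and kill the cross terms using the divisibility of the binomial coefficients by $p$. The only cosmetic difference is that the paper establishes $(F+G)^p=F^p+G^p$ and inducts on $n$, whereas you apply Kummer's theorem directly at the exponent $p^n$; both are valid.
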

\begin{proof}
  Due to the structure of the group $\A=\Z_p^2$, for any configuration $c\in \A^\Z$ we have ${p\cdot c = \overline{0}}$. Therefore for any CA $F$, we have that ${p\cdot F}$ is the constant map equal to $\overline{0}$.
  
  Now, from the binomial formula and from the fact that $p$ divides
  ${p \choose n}$ for any ${1<n<p}$, we deduce that:
  \[(F+ G)^p = F^p + G^p.\]
  The lemma follows by an easy induction.
\end{proof}

Extending example $F_2$ from Section~\ref{sec:defs}, we now consider for each prime $p\geq 2$ and all ${c\in \A^\Z}$:
\begin{align*}
  F_p(c)_z &= \bigl(\pi_1(c_{z+1})+\pi_2(c_z),
  \pi_1(c_z)\bigr)\\
  G_p(c)_z &= \bigl(\pi_1(c_{z+1})+\pi_1(c_{z})+\pi_2(c_z),
  \pi_1(c_z)\bigr)
\end{align*}

In the remainder of the section we prove that $F_p$ and $G_p$ are strongly diffusive for any prime ${p\geq 2}$.\bigskip

These cellular automata are reversible and in fact also
time-symmetric \textit{i.e.} the product of two involutions \cite{GajardoKM12}. For instance the inverse of $F_p$ is:
\[F_p^{-1}(c)_z = \bigl(\pi_2(c_z), \pi_1(c_z)-\pi_2(c_{z+1})\bigr).\]

Since they are reversible, we extend their dependency diagrams to negative times, which means that $\Delta_\Phi(t,z)$ is defined for any ${(t,z)\in\Z^2}$ where $\Phi$ denotes $F_p$ or $G_p$.

Note that both $F_p$ and its inverse can be defined with neighborhood
${\{0,1\}}$. The same is true for $G_p$.

\begin{lemma}
  \label{lem:selfsimilarity}
  For any $n\geq 0$, any $t\in\Z$ and any $z\in\Z$ we have:
  \begin{align*}
    \Delta_{F_p}(2p^n+t,z) &= \Delta_{F_p}(p^n+t,p^n+z)+\Delta_{F_p}(t,z)\\
    \Delta_{G_p}(2p^n+t,z) &= \Delta_{G_p}(p^n+t,p^n+z)+ \Delta_{G_p}(p^n+t,z)+\Delta_{G_p}(t,z)\\
  \end{align*}
\end{lemma}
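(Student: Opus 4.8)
The plan is to recognize Lemma~\ref{lem:selfsimilarity} as a direct consequence of the binomial-type identity established in Lemma~\ref{lem:binomialmodp}, applied to the decomposition of $F_p$ (resp. $G_p$) into a sum of two commuting abelian CA over $\A^\Z = \Z_p^2$. First I would write $F_p = A + B$ where $A(c)_z = (\pi_2(c_z), \pi_1(c_z))$ is the ``swap'' involution (local, neighborhood $\{0\}$) and $B(c)_z = (\pi_1(c_{z+1}), 0)$ is a ``shift-and-project'' map (neighborhood $\{0,1\}$). A short computation shows $A$ and $B$ commute: both $AB$ and $BA$ send $c$ to the configuration whose $z$-th cell is $(0,\pi_1(c_{z+1}))$, so $AB = BA$. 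Similarly, for $G_p$ I would write $G_p = A + B'$ with the same $A$ and $B'(c)_z = (\pi_1(c_{z+1}) + \pi_1(c_z), 0)$, and check in the same way that $A$ and $B'$ commute.

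Next, by Lemma~\ref{lem:binomialmodp} (applied to the commuting pair), $F_p^{p^n} = A^{p^n} + B^{p^n}$ for all $n \geq 0$. Now $A^2 = \mathrm{Id}$, so $A^{p^n} = A$ when $p^n$ is odd, i.e. for all $n$ when $p$ is odd; when $p = 2$ the identity $A^{2^n} = \mathrm{Id}$ also makes the argument go through, since $A^{2^n}$ is just the identity for $n \geq 1$ and equals $A$ for $n = 0$ — in every case $A^{p^n} \in \{A, \mathrm{Id}\}$ and in particular it is a \emph{local} map with neighborhood $\{0\}$. Meanwhile $B^{p^n}$ is a power of a map whose action factors through $\pi_1 \circ \sigma$; one computes directly that $B^{p^n}(c)_z = (\pi_1(c_{z+p^n}), 0)$ — essentially because composing $B$ with itself repeatedly shifts the relevant component and the second coordinate is annihilated after one step. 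Hence $F_p^{p^n}$ has neighborhood $\{0, p^n\}$ and its dependency function satisfies $\Delta_{F_p}(p^n, 0) = [\text{swap or identity}]$, $\Delta_{F_p}(p^n, p^n) = [\text{projection onto first coordinate}]$, and $\Delta_{F_p}(p^n, z) = 0$ otherwise. The upshot, written additively in terms of dependency functions, is the ``self-similarity across one $p^n$-step'' relation; composing with an arbitrary further evolution of length $t$ via Equation~\eqref{eq:dependency} and the semigroup property $\Delta_{F_p}(s+t, z) = \sum_j \Delta_{F_p}(s, z-j)\circ\Delta_{F_p}(t,j)$ (valid for all integers, since the CA is reversible) turns the one-step relation into $\Delta_{F_p}(2p^n + t, z) = \Delta_{F_p}(p^n + t, p^n + z) + \Delta_{F_p}(t, z)$, which is exactly the first claimed identity.

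For $G_p$ the same scheme applies with $B$ replaced by $B'$: here $B'^{p^n}(c)_z = (\pi_1(c_{z+p^n}) + \pi_1(c_z), 0)$ — the extra diagonal term survives because, modulo $p$, iterating $B'$ produces binomial coefficients $\binom{p^n}{k}$ that all vanish except at $k = 0$ and $k = p^n$, leaving precisely the two terms at relative positions $0$ and $p^n$. Consequently $G_p^{p^n}$ has three dependencies, at offsets $0$, $p^n$ again, and the identity/swap part at $0$ combines to give $\Delta_{G_p}(p^n, 0)$, $\Delta_{G_p}(p^n, p^n)$ with the middle term accounting for the ``$+\Delta_{G_p}(p^n+t, z)$'' in the statement. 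I expect the main obstacle to be purely bookkeeping: carefully tracking which of $A$, $\mathrm{Id}$ appears (the $p=2$ versus $p$ odd distinction), getting the offset conventions in $\Delta$ consistent with the neighborhood $\{0,1\}$ orientation, and justifying the additive composition law for $\Delta$ at negative times using reversibility. None of these is deep, but the sign/offset arithmetic is where an error would most likely creep in, so I would verify the $n = 0$, small-$t$ cases of both identities by hand against the explicit local rules before asserting the general formula.
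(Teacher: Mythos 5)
There is a genuine gap at the very first step: the maps $A$ and $B$ in your decomposition $F_p = A + B$ do \emph{not} commute. With $A(c)_z = (\pi_2(c_z),\pi_1(c_z))$ and $B(c)_z = (\pi_1(c_{z+1}),0)$ one computes
\[
A(B(c))_z = \bigl(\pi_2(B(c)_z),\pi_1(B(c)_z)\bigr) = \bigl(0,\pi_1(c_{z+1})\bigr),
\qquad
B(A(c))_z = \bigl(\pi_1(A(c)_{z+1}),0\bigr) = \bigl(\pi_2(c_{z+1}),0\bigr),
\]
so $AB \neq BA$; your claim that both equal $(0,\pi_1(c_{z+1}))$ miscomputes $BA$. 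This is not a repairable bookkeeping slip: the whole point of $F_p$ in this paper is that its coefficients are non-commuting endomorphisms, and indeed the consequence you draw, $F_p^{p^n} = A^{p^n}+B^{p^n}$, is already false for $p=2$, $n=1$ (one has $F_2^2(c)_z = (\pi_1(c_{z+2})+\pi_2(c_{z+1})+\pi_1(c_z),\ \pi_1(c_{z+1})+\pi_2(c_z))$, whereas $A^2+B^2$ gives $(\pi_1(c_z)+\pi_1(c_{z+2}),\ \pi_2(c_z))$). Since Lemma~\ref{lem:binomialmodp} is inapplicable to your decomposition, everything downstream collapses.

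The fix is to find the commuting decomposition one level up, which is what the paper does: verify directly that $F_p^2 = (\sigma\circ F_p) + I$. The two summands $\sigma\circ F_p$ and $I$ trivially commute, so Lemma~\ref{lem:binomialmodp} applies and yields $F_p^{2p^n} = \sigma^{p^n}\circ F_p^{p^n} + I$; composing with $F_p^t$ gives $F_p^{2p^n+t}(c)_z = F_p^{p^n+t}(c)_{p^n+z} + F_p^t(c)_z$, which is exactly the claimed identity for the dependency functions. Note also that the target of the lemma is a relation between $\Delta$ at times $2p^n+t$, $p^n+t$ and $t$ — i.e.\ a statement about $F_p^{2p^n}$ in terms of $F_p^{p^n}$ — not a closed formula for $F_p^{p^n}$ in terms of elementary local maps, so even a corrected version of your computation of $F_p^{p^n}$ would still leave you a step away from the statement. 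The same scheme handles $G_p$ via $G_p^2 = (\sigma\circ G_p) + G_p + I$.
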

\begin{proof}
  First, it is straightforward to check that ${F_p^2 = (\sigma\circ F_p)
    + I}$ (where $I$ denotes the identity map over
  $\A^\Z$). Hence, using Lemma~\ref{lem:binomialmodp}, we get the
  identity ${F_p^{2p^n} = (\sigma_{p^n}\circ F_p^{p^n})+
    I}$. For every configuration $c$, $t\in\Z$ and $z\in\Z$ we have
${F_p^{2p^n+t}(c)_z = F_p^{p^n+t}(c)_{p^n+z}+  c_z}$, which proves the Lemma.

  The same proof scheme applies to $\Delta_{G_p}$.
\end{proof}

\begin{lemma}
  \label{lem:basicdeps}
  Let $\Phi$ be either $F_p$ or $G_p$. For any ${t\in\Z}$ we have:
  \begin{enumerate}
  \item $\Delta_\Phi(t,z)$ is the constant map equal to $(0,0)$ when
    ${z>0}$ or ${z<-|t|}$
  \item $\Delta_\Phi(t,0)$ is a bijection.
  \end{enumerate}
\end{lemma}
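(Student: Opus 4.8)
The two statements are both elementary facts about the ``light cone'' of a single perturbation under a reversible CA whose neighborhood is $\{0,1\}$; everything I use about $\Phi\in\{F_p,G_p\}$ is recorded in the text: $\Phi$ has neighborhood $\{0,1\}$, $\Phi$ is reversible, and $\Phi^{-1}$ again has neighborhood $\{0,1\}$ (for $F_p$ the inverse is displayed explicitly; for $G_p$ one checks directly that $G_p^{-1}(c)_z=\bigl(\pi_2(c_z),\,\pi_1(c_z)-\pi_2(c_z)-\pi_2(c_{z+1})\bigr)$).

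For statement~1, I would simply track supports. Since $\Phi(c)_z=f(c_z,c_{z+1})$ for a local rule $f$, a finite configuration supported on $\{0\}$ has image supported in $\{-1,0\}$, and hence $\support(\Phi^t(x_q))\subseteq\{-t,\dots,0\}$ for all $t\geq 0$ by a trivial induction; equivalently $\Delta_\Phi(t,z)=(0,0)$ whenever $z>0$ or $z<-t$. Applying the same observation to $(\Phi^{-1})^{|t|}=\Phi^{t}$ (using that $\Phi^{-1}$ also has neighborhood $\{0,1\}$) disposes of $t<0$, giving $\Delta_\Phi(t,z)=(0,0)$ for $z>0$ or $z<-|t|$, for every $t\in\Z$.

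For statement~2 I would first isolate a small general fact: \emph{if a CA $\Phi$ and its inverse both have neighborhood $\{0,1\}$, with local rules $f$ and $g$ respectively, then for every fixed $b$ the map $a\mapsto f(a,b)$ is a bijection of $\A$.} This follows from the identity $g\bigl(f(c_z,c_{z+1}),f(c_{z+1},c_{z+2})\bigr)=c_z$: if $f(a,b)=f(a',b)$, feeding a configuration with $c_z\in\{a,a'\}$, $c_{z+1}=b$ and $c_{z+2}$ fixed into this identity forces $a=a'$, so $a\mapsto f(a,b)$ is injective hence bijective. (Since $\Phi$ is abelian this map is in fact an automorphism translated by the constant $f(0,b)$, but bijectivity is all we need.) Now I argue by induction on $|t|$. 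For $t=0$, $\Delta_\Phi(0,0)$ is the identity. For $t\geq 0$, statement~1 gives $\Phi^t(x_q)_1=0$, so $\Delta_\Phi(t+1,0)(q)=\Phi^{t+1}(x_q)_0=f\bigl(\Phi^t(x_q)_0,\Phi^t(x_q)_1\bigr)=f\bigl(\Delta_\Phi(t,0)(q),0\bigr)$; since $a\mapsto f(a,0)$ is a bijection and $\Delta_\Phi(t,0)$ is a bijection by the induction hypothesis, so is $\Delta_\Phi(t+1,0)$. For $t<0$ one has $\Delta_\Phi(t,0)=\Delta_{\Phi^{-1}}(-t,0)$, and the identical argument applied to $\Phi^{-1}$ (whose local rule $g$ also satisfies that $a\mapsto g(a,0)$ is bijective, by the general fact applied to $\Phi^{-1}$) finishes the proof.

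I do not expect a genuine obstacle here: the only things requiring care are orientation (neighborhood to the right $\Rightarrow$ light cone opening to the left $\Rightarrow$ it is the right endpoint $z=0$ that carries the bijective dependency) and the treatment of negative times, for which one needs the explicit inverse rules and the already-stated fact that these inverses still have neighborhood $\{0,1\}$. If one prefers to avoid the general lemma, one can instead just compute directly that $a\mapsto f(a,0)$ is the coordinate swap $(a_1,a_2)\mapsto(a_2,a_1)$ for $\Phi=F_p$ and $(a_1,a_2)\mapsto(a_1+a_2,a_1)$ for $\Phi=G_p$ (with the analogous computations for the inverses), each manifestly bijective; the induction then shows $\Delta_\Phi(t,0)$ is an iterate of such a map.
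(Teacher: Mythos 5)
Your proof is correct, and while item~1 is handled exactly as in the paper (light-cone induction using that both $\Phi$ and $\Phi^{-1}$ have neighborhood $\{0,1\}$), your argument for item~2 takes a genuinely different route. The paper specializes the self-similarity identity of Lemma~\ref{lem:selfsimilarity} to $n=0$ to obtain a linear recurrence for $\Delta_\Phi(t,0)$ (using item~1 to kill the terms at $z=1$), computes the first few maps explicitly, and concludes bijectivity from the resulting periodicity. You instead isolate the purely dynamical fact that for a reversible CA whose forward and inverse local rules both have neighborhood $\{0,1\}$, the map $a\mapsto f(a,b)$ is a bijection for each fixed $b$, and then write $\Delta_\Phi(t+1,0)=\bigl(a\mapsto f(a,0)\bigr)\circ\Delta_\Phi(t,0)$, which is a composition of bijections; negative times are handled symmetrically via $\Phi^{-1}$. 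Your argument is more general (it needs neither the abelian structure nor Lemma~\ref{lem:binomialmodp}/\ref{lem:selfsimilarity}, only reversibility and the one-sided neighborhoods, so it would apply verbatim to any such CA) and it avoids the explicit small-$t$ computations; as a side benefit it also sidesteps the paper's explicit formula $\Delta_{G_p}(2,0):(g,h)\mapsto(h,g+h)$ and the claimed period $3$, which are accurate only for $p=2$ (the recurrence $D(t+2)=D(t+1)+D(t)$ actually gives $D(2):(g,h)\mapsto(2g+h,g+h)$, still a bijection, so the paper's conclusion is unaffected). What the paper's route buys in exchange is the explicit (eventually periodic) closed form of $\Delta_\Phi(t,0)$, though only bijectivity is used downstream in Lemma~\ref{lem:rankplusone}.
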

\begin{proof}
  First both $\Phi$ and $\Phi^{-1}$ have neighborhood ${\{0,1\}}$. So
  the first item is straightforward by induction. 

  Second, by definition, $\Delta_\Phi(0,0)$ is a bijection. We can check that:
  \[\Delta_{F_p}(1,0):(g,h)\mapsto (h,g),\ \Delta_{G_p}(1,0):(g,h)\mapsto (g+h,g),\ \Delta_{G_p}(2,0):(g,h)\mapsto (h,g+h),\]
  which are bijections. Applying Lemma~\ref{lem:selfsimilarity} with $n=0$, we get by straightforward induction that $\Delta_{F_p}(t+2,0)=\Delta_{F_p}(t,0)$ and $\Delta_{G_p}(t+3,0) = \Delta_{G_p}(t,0)$. We proved the second item.
\end{proof}

Much of the structure of $\Delta_\Phi$ can be understood when focusing
on particular ``triangular'' zones of $\Z^2$ at various scales. For $k\geq
0$ and $n$ large enough so that ${p^n-k>k}$, we define the corresponding zone as:
\[T_{n,k} = \{(t,z) : k<t<p^n-k\text{ and }-t<z<k\}\]

\begin{lemma}
  \label{lem:rankplusone}
  Let $\Phi$ be either $F_p$ or $G_p$.
  Let ${k\geq 0}$ and $n$ such that ${p^n-k>k}$. Then for any
  $(t,z)\in T_{n,k}$ and any ${j\geq 1}$ we have:
  \[\Delta_\Phi(t,z) = \Delta_\Phi(t+j\cdot p^n,z-j\cdot p^n).\]
  In particular, if $\chi$ is a character whose support is of diameter at
  most $k$, then for any $t$ with ${k<t<p^n-k}$ we have:
  \[\rank(\chi\circ\Phi^{t+j\cdot p^n})\geq \rank(\chi\circ\Phi^t)+1\]
\end{lemma}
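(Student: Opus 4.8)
The plan is to prove the dependency identity first, by induction on $j$, and then deduce the rank statement from the local description of character supports given by Lemma~\ref{lem:localrank}.

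\emph{The identity.} I would actually prove the slightly stronger statement that $\Delta_\Phi(t+jp^n,z-jp^n)=\Delta_\Phi(t,z)$ holds for all $j\geq0$ and all $(t,z)$ with $k<t<p^n-k$ and $-t\leq z<k$, i.e.\ on $T_{n,k}$ together with its left edge $z=-t$. Both $F_p$ and $G_p$ obey a self-similarity relation of the type of Lemma~\ref{lem:selfsimilarity}, namely $\Delta_\Phi(2p^n+s,w)=\Delta_\Phi(p^n+s,p^n+w)+(\text{a fixed sum of terms }\Delta_\Phi(s',w)\text{ with }s'\in\{s,\,s+p^n\})$. Feeding $s=t+(j-1)p^n$ and $w=z-(j+1)p^n$ into it, the leading term on the right becomes $\Delta_\Phi(t+jp^n,z-jp^n)$, equal to $\Delta_\Phi(t,z)$ by the induction hypothesis, while each remaining term has the form $\Delta_\Phi(t+ip^n,\,z-(j+1)p^n)$ with $i\in\{j-1,j\}$; from $z<k$ and $t<p^n-k$ one checks that its spatial coordinate lies strictly left of the light cone of $\Delta_\Phi(t+ip^n,\cdot)$ (the inequalities needed are $z<2p^n-t$ or $z<t$ for $i=j-1$, and $z<p^n-t$ for $i=j$, and these hold also at $z=-t$), so Lemma~\ref{lem:basicdeps}(1) makes that term vanish. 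Hence only the leading term survives and the induction closes; $F_p$ and $G_p$ are treated by the identical computation, $G_p$ merely carrying one extra correction term.

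\emph{The rank inequality.} By shift-invariance of the rank I may assume $\support(\chi)\subseteq[0,k-1]$ with $\chi_0\neq\mathbf 1$. As in the proof of Lemma~\ref{lem:localrank}, $q\in\support(\chi\circ\Phi^s)$ iff $\prod_{z=0}^{k-1}\chi_z\circ\Delta_\Phi(s,z-q)\neq\mathbf 1$, and since $\Delta_\Phi(s,\cdot)$ is zero outside $[-s,0]$ (Lemma~\ref{lem:basicdeps}(1)) this forces $\support(\chi\circ\Phi^s)\subseteq[0,s+k-1]$. The key claim is that \emph{every} $q\in\support(\chi\circ\Phi^t)$ transfers to $q+jp^n\in\support(\chi\circ\Phi^{t+jp^n})$, via Lemma~\ref{lem:localrank} applied with $(t_1,z_1)=(t,-q)$ and $(t_2,z_2)=(t+jp^n,-q-jp^n)$: the window of dependencies to compare is $\{-q,\dots,-q+k-1\}$, and for $q\in[0,t-1]$ it lies inside $(-t,k)$, so the identity applies directly; for $q\in[t,t+k-1]$ its entries with spatial coordinate $<-t$ are zero on both sides, the single entry at $-t$ is the left-edge case of the strengthened identity, and the rest again lies in $T_{n,k}$; and positions $q<0$ or $q>t+k-1$ are not in the support. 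This already gives $\rank(\chi\circ\Phi^{t+jp^n})\geq\rank(\chi\circ\Phi^t)$.

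\emph{The extra $+1$, and the obstacle.} It remains to find one position in $\support(\chi\circ\Phi^{t+jp^n})$ not of the form $q+jp^n$: position $0$ works. Indeed $\Delta_\Phi(t+jp^n,0)$ is a bijection and $\Delta_\Phi(t+jp^n,z)=0$ for $1\leq z\leq k-1$ (Lemma~\ref{lem:basicdeps}), so the product collapses to $\chi_0\circ\Delta_\Phi(t+jp^n,0)\neq\mathbf 1$, placing $0$ in the support; and $0<jp^n$ whereas the transferred positions all lie in $[jp^n,\,jp^n+t+k-1]$, so $0$ is genuinely new, yielding $\rank(\chi\circ\Phi^{t+jp^n})\geq\rank(\chi\circ\Phi^t)+1$. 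The induction for the identity is routine; the one subtle point is the transfer of the \emph{rightmost} support positions (those in $[t,t+k-1]$), whose comparison windows protrude past the strict region $-t<z<k$---this is exactly why the identity has to be pushed to the left edge $z=-t$, equivalently why the leftmost dependency $\Delta_\Phi(s,-s)$ is independent of $s$. Everything else is bookkeeping with the light cone $-s\leq z\leq0$.
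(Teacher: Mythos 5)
Your proof is correct and follows the same route as the paper's: the identity is obtained by induction on $j$ from the self-similarity relation of Lemma~\ref{lem:selfsimilarity} together with the light-cone vanishing of Lemma~\ref{lem:basicdeps}, and the rank inequality by transferring support positions via Lemma~\ref{lem:localrank} and adding the $k$-isolated bijective dependency at $0$. You are in fact more careful than the paper on one point: the paper's transfer step tacitly assumes the comparison window stays inside $T_{n,k}$, which fails for the leftmost and rightmost support positions, and your strengthening of the identity to the edge $z=-t$ (together with the observation that entries outside the light cone vanish on both sides) is exactly what is needed to close that gap.
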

The lemma is illustrated in Figure~\ref{fig:triangles}.

\begin{figure}
\begin{center}
  \reflectbox{\includegraphics[width=0.8\textwidth, angle=180]{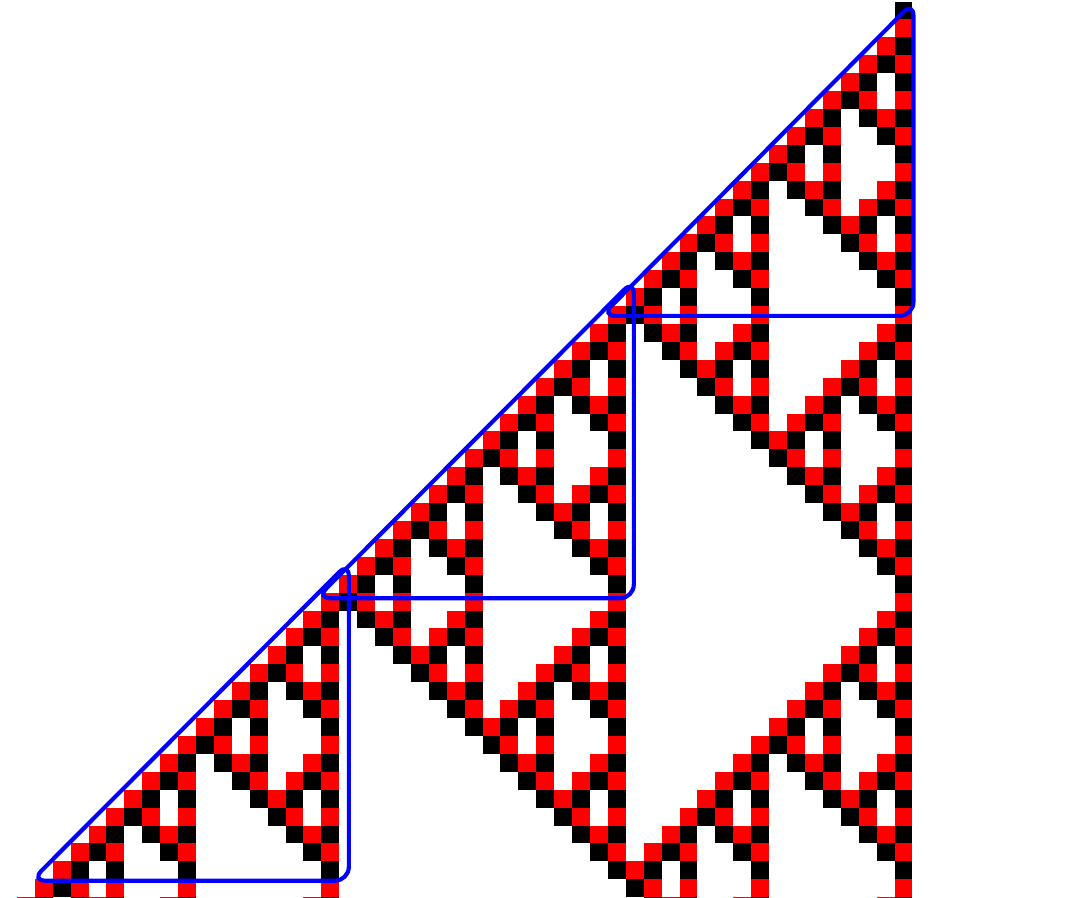}}
  \caption{A representation of $\Delta_{F_2}(i,t)$ for $i\in\Z$ and $t\in\N$. The direction of time is upward. Thick blue lines indicate triangular zones $T_{4,0}$ and its translations by $(j\cdot 2^4, -j\cdot 2^4)$ for $j=1,2$.}\label{fig:triangles}
\end{center}
\end{figure}

\begin{proof}
  For the first assertion consider some $(t,z)\in T_{n,k}$. From
  Lemma~\ref{lem:basicdeps}, we have that both
  ${\Delta_\Phi(t-p^n,z-p^n)}$ and ${\Delta_\Phi(t,z-p^n)}$ are
  the constant map equal to $(0,0)$. Therefore the following identities obtained from Lemma~\ref{lem:selfsimilarity}
  \begin{align*}
    \Delta_{F_p}(t+p^n,z-p^n) &= \Delta_{F_p}(t-p^n,z-p^n) + \Delta_{F_p}(t,z)\\
    \Delta_{G_p}(t+p^n,z-p^n) &= \Delta_{G_p}(t-p^n,z-p^n) + \Delta_{G_p}(t,z-p^n)
    + \Delta_{G_p}(t,z)
  \end{align*}
  can in both cases be simplified to:
  \[\Delta_\Phi(t,z) = \Delta_\Phi(t+p^n,z-p^n).\]  
  This prove the case ${j=1}$. The same idea shows the
  induction step on $j$:
  \[\Delta_\Phi(t+j\cdot p^n,z-j\cdot p^n) = \Delta_\Phi(t+(j+1)\cdot p^n,z-(j+1)\cdot p^n).\]

  For the second assertion, apply Lemma~\ref{lem:localrank} on the first assertion. We get that $-z\in\support(\chi\circ\Phi^t)$ if and only if $-z+j\cdot p_n \in \support(\chi\circ\Phi^{t+j\cdot p^n})$. Furthermore, we also have $0 \in \support(\chi\circ\Phi^{t+j\cdot p^n})$ since $0$ is a $k$-isolated bijective dependency by Lemma~\ref{lem:basicdeps}. Accounting for both contributions, we get $\rank(\chi\circ\Phi^{t+j\cdot p^n})\geq \rank(\chi\circ\Phi^t)+1.$ \end{proof}


\begin{theorem}\label{ref:strongrandomize}
  $F_p$ and $G_p$ strongly randomize the harmonically mixing measures.
  \end{theorem}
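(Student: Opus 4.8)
The plan is to reduce the statement, via Proposition~\ref{prop:chidiffusive}, to strong character-diffusivity: it suffices to prove that for $\Phi\in\{F_p,G_p\}$ and every nontrivial character $\chi$ one has $\rank(\chi\circ\Phi^t)\to\infty$ as $t\to\infty$. Since the rank is translation invariant, normalize $\chi$ so that $\support(\chi)\subseteq[0,k]$ with $\chi_0\neq\mathbf 1$. With this normalization, for every $t$ the pair $(t,0)$ is a $k$-isolated bijective dependency of $\Phi^t$ by Lemma~\ref{lem:basicdeps}, and the computation of $(\chi\circ\Phi^t)_0$ from the proof of Lemma~\ref{lem:localrank} then gives $0\in\support(\chi\circ\Phi^t)$; hence $\rank(\chi\circ\Phi^t)\geq 1$ for all $t$.

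I would then prove by induction on $C\geq1$ that there is a threshold $T_C$ with $\rank(\chi\circ\Phi^t)\geq C$ for all $t\geq T_C$, the base case $C=1$ being the remark above with $T_1=0$. For the inductive step one chooses, according to the base-$p$ expansion of $t$, between two mechanisms. \emph{Peeling}: if there is a scale $n$ with $p^n>2k$, $t\geq p^n$, and $t\bmod p^n\in(\max(k,T_{C-1}),\,p^n-k)$, then Lemma~\ref{lem:rankplusone} applied with $t'=t\bmod p^n$ and $j=\lfloor t/p^n\rfloor\geq1$ gives $\rank(\chi\circ\Phi^t)\geq\rank(\chi\circ\Phi^{t'})+1\geq C$ by the induction hypothesis. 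A short analysis of digits shows that the only $t$ admitting no such scale are those lying within distance $\max(k,T_{C-1})$ of a power $p^m$ or a small multiple $jp^m$ with $j<p$ and $m=\lfloor\log_p t\rfloor$. \emph{Self-similar splitting}: for these $t$ I would use Lemma~\ref{lem:selfsimilarity} together with Lemma~\ref{lem:binomialmodp}. Its cleanest instance is $\Phi^{2p^n}=\sigma^{p^n}\circ\Phi^{p^n}+(\text{lower-order terms})$ — namely $F_p^{2p^n}=\sigma^{p^n}\circ F_p^{p^n}+\mathrm{Id}$ and $G_p^{2p^n}=\sigma^{p^n}\circ G_p^{p^n}+G_p^{p^n}+\mathrm{Id}$ — which, for $p=2$ and iterated, writes the dependency diagram $\Delta_\Phi(t,\cdot)$ as a union of shifted copies of $\Delta_\Phi(t_1,\cdot)$ at strictly smaller scales $t_1$, with disjoint supports once the scale exceeds the small remainder (Lemma~\ref{lem:basicdeps}); keeping the copy near the origin, which always carries a bijective dependency, yields $\rank(\chi\circ\Phi^t)\geq\rank(\chi\circ\Phi^{t_1})+1$ and, iterated $\geq C-1$ times, the bound $\rank(\chi\circ\Phi^t)\geq C$ provided $T_C$ is taken large enough. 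For $p>2$ one instead unfolds $\Phi^{p^{n+1}}=(\Phi^{p^n})^p$ using the quadratic relation $(\Phi^{p^n})^2=\sigma^{p^n}\circ\Phi^{p^n}+(\text{lower order})$, obtaining a combination of $p-1$ shifted copies of $\Delta_\Phi(p^n,\cdot)$ whose coefficients are Fibonacci-type polynomials in $\sigma^{p^n}$ over $\F_p$.

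The two underlying estimates — Lemma~\ref{lem:rankplusone} for the peeling regime, and the disjointness in the $p=2$ splitting — are already available, so the routine content is limited. I expect the main obstacle to be twofold: first, the combinatorial bookkeeping that glues the two regimes, i.e. checking that every sufficiently large $t$ falls into exactly one of them and that the thresholds $T_C$ can be chosen consistently (they must grow rapidly with $C$); and second, for $p>2$, controlling the rank through the \emph{overlapping} shifted copies produced by unfolding $(\Phi^{p^n})^p$, where one must verify that cancellations among the $\F_p$-coefficients do not destroy the rank gain localized near the origin. Once the induction is complete, $\rank(\chi\circ\Phi^t)\to\infty$ for every nontrivial $\chi$, and Proposition~\ref{prop:chidiffusive} promotes this to strong randomization of the harmonically mixing measures.
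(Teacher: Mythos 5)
Your reduction to strong character-diffusivity via Proposition~\ref{prop:chidiffusive}, the base case $\rank(\chi\circ\Phi^t)\geq 1$ from the $k$-isolated bijective dependency at $(t,0)$ (Lemma~\ref{lem:basicdeps}), and the ``peeling'' step via Lemma~\ref{lem:rankplusone} for times whose residue modulo some scale $p^n$ falls in the triangular window are all sound, and this part coincides with the paper's strategy. The gap is in your treatment of the exceptional times lying within distance $\max(k,T_{C-1})$ of a multiple $jp^m$. Your self-similar splitting does work for $p=2$: iterating $\Delta_\Phi(2^m+r,z)=\Delta_\Phi(2^{m-1}+r,2^{m-1}+z)+\Delta_\Phi(r,z)$ yields copies whose supports are separated by long runs of zero dependencies, so Lemma~\ref{lem:localrank} lets the rank contributions add. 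But for $p>2$ the unfolding of $\Phi^{jp^n}$ through the quadratic relation $(\Phi^{p^n})^2=\sigma^{p^n}\circ\Phi^{p^n}+(\text{lower order})$ produces shifted copies of $\Delta_\Phi(p^n,\cdot)$ whose supports overlap on intervals of length comparable to $p^n$ (already for $t$ near $p^{m}$ itself when $p\geq 5$, and certainly for $t$ near $jp^m$ with $3\leq j\leq p$). A lower bound on the rank of a sum of overlapping dependency diagrams, in the presence of possible $\F_p$-cancellations, is precisely the kind of statement the whole proof is trying to establish; you flag this yourself as unresolved, so as written the argument only proves the theorem for $p=2$.

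The paper closes these exceptional windows by a mechanism your proposal never invokes: reversibility. Since $F_p$ and $G_p$ are reversible, Lemma~\ref{lem:rankdecreasing} shows that the rank of $\chi\circ\Phi^{s+t}$ is at least the rank of $\chi\circ\Phi^{s}$ divided by a factor controlled by $t$, i.e.\ the rank cannot collapse in few steps. The paper first establishes $\rank(\chi\circ\Phi^{p^n})\to\infty$ by chaining Lemma~\ref{lem:rankplusone} up to a time within $p^{n_0}$ of $p^{n+1}$ and then bridging that short gap by reversibility; it then runs an induction on a predicate recording both a uniform rank bound on $[t_0,p^n-k]$ and the fact that $\rank(\chi\circ\Phi^{p^n-k})$ is so large that the rank stays above the target throughout the window $[(j+1)p^n-k,(j+1)p^n+t_0]$. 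Replacing your splitting step by this reversibility bridge makes your induction go through for every prime $p$; without it, the odd-$p$ case is genuinely open in your write-up.
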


  \begin{proof}
We prove that $F_p$ and $G_p$ are strongly character-diffusive, and Proposition~\ref{prop:chidiffusive} implies the result.
    
  Denote by $\Phi$ either $F_p$ or $G_p$. $\Phi$ is reversible and denote $C$ the diameter of the neighborhood of $\Phi^{-1}$. By Lemma~\ref{lem:rankdecreasing}, $\rank(\chi\circ \Phi)\geq C\cdot\rank(\chi)$ for any character $\chi$. Since $\Phi^{-t}$ can be defined by a neighborhood of diameter $C\cdot t$, $\rank(\chi\circ \Phi^t)\geq C\cdot t\cdot\rank(\chi)$ for any character $\chi$. In particular, for any $m\geq0$, any $T\geq 0$, and any character $\chi_0$ of rank at least $C\cdot m\cdot T$, we have $\rank{(\chi_0\circ\Phi^t)}\geq m$ for ${1\leq t\leq T}$.
    
    Let $\chi$ be any non-trivial character and let $k$ be the diameter of its support. Denote by ${R(t)=\rank(\chi\circ\Phi^t)}$. We are going to show that ${R(t)\rightarrow\infty}$, which implies the claim since the choice of $\chi$ is arbitrary.

  First, let $n_0$ be large enough and $t_0$ such that
  ${k<t_0<p^{n_0}-k}$. By successive applications of
  Lemma~\ref{lem:rankplusone}, we get
  \[R\bigl(t_0+(p-1)p^{n_0}+(p-1)p^{n_0+1}+\cdots
  +(p-1)p^{n_0+m}\bigr)\geq m.\]

  Since ${p^{n_0+m+1}=\sum_{j=0}^m(p-1)p^{n_0+j}+p^{n_0}}$ we deduce that 
  \begin{equation*}
    p^{n_0+m+1} - \bigl(t_0 + \sum_{j=0}^m(p-1)p^{n_0+j}\bigr) \leq p^{n_0}-t_0
  \end{equation*}
  so that as soon as $m\geq C\cdot M\cdot (p^{n_0}-t_0)$ it follows $R(p^{n_0+m+1}) \geq M$ by Lemma~\ref{lem:rankdecreasing}. Therefore:
    \begin{equation}
    R(p^{n})\rightarrow_n\infty.
    \label{eq:growatpowers}
  \end{equation}
  Now define the predicate $P_{t_0,n,m}$ as the conjunction of the following conditions:
  \begin{enumerate}
  \item ${k\leq t_0\leq p^n - k}$;
  \item $\forall t, t_0\leq t\leq p^n-k: R(t)\geq m$;
  \item $R(p^n-k)\geq C\cdot(m+1)\cdot(t_0+k)$.
  \end{enumerate}
  First, since $R(t)\geq 1$ for any $t$ (from
  Lemma~\ref{lem:basicdeps}), we have by Equation (\ref{eq:growatpowers}) above
  and Lemma~\ref{lem:rankincrease} that $P_{k,n,1}$ holds for $n$ large enough.
  
  Furthermore, if $P_{t_0,n,m}$, then $P_{t_0+p^n,n',m+1}$ for all
  large enough $n'$. Indeed, condition 1 is obviously true for $n'>
  n$, condition 3 is true for any large enough $n'$ from
  (\ref{eq:growatpowers}) above and
  Lemma~\ref{lem:rankincrease}. Finally, condition 2 is obtained from
  Lemma~\ref{lem:rankplusone}: for any $j\geq 1$,
  \begin{enumerate}
  \item for $j\cdot p^n +t_0\leq t\leq (j+1)\cdot p^n - k$, ${R(t)\geq R(t-j\cdot p^n) + 1\geq m+1}$;
  \item for $(j+1)\cdot p^n - k\leq t\leq (j+1)\cdot p^n+t_0$ we have ${R(t)\geq m+1}$ because ${R((j+1)\cdot p^n-k)\geq R(p^n-k)\geq C\cdot(m+1)\cdot(t_0+k)}$.
  \end{enumerate}
  
  We have shown that for any $m$ there exists $t_0$ such that
  for any large enough $n'$ we have $P_{t_0,n',m}$. This in particular
  implies ${R(t)\geq m}$ for all $t\geq t_0$. We conclude that
  ${R(t)\rightarrow\infty}$.\end{proof}

\subsection{Randomizing only up to fixed-length cylinders}
\label{sec:fixedlength}

We now define a family of cellular automata that randomize finite cylinders up to a certain length, but no further. The alphabet is $G^2$ where $G$ is any finite abelian group. Define:
\begin{align*}
  I_G(c)_z &= \bigl(-[\pi_1(c_{z-1})+\pi_1(c_{z+1})+\pi_2(c_z)],
  \pi_1(c_z)\bigr).
\end{align*}
Notice that $I_{\Z_2} = H_2$.

\begin{proposition}\label{prop:cellwise}
$I_G$ randomizes cylinders of length $1$, but not cylinders of length $2$.
\end{proposition}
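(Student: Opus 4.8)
The plan is to transfer both assertions to the dual cellular automaton $\dual{I_G}$ via Proposition~\ref{prop:chidiffusive}. Since $\rank(\chi\circ I_G^{t})=\rank\bigl(\dual{I_G}^{t}(\confcar\chi)\bigr)$ and $\support(\confcar\chi)=\support(\chi)$, that proposition (applied to the sets $\{0\}$ and $\{0,1\}$) reduces the claim to: $\rank\bigl(\dual{I_G}^{t}(c)\bigr)\to\infty$ for every rank‑$1$ finite configuration $c$, while some rank‑$2$ finite configuration has a $\dual{I_G}$‑orbit of bounded rank. Computing $\dual{I_G}$ from~\eqref{eq:defdual} — the coefficient endomorphisms of $I_G$ are $(g,h)\mapsto(-g,0)$ at positions $\pm1$ and $(g,h)\mapsto(-h,g)$ at position $0$, whose duals under $\dual G\cong G$ are $(g,h)\mapsto(-g,0)$ and $(g,h)\mapsto(h,-g)$ — one gets
\[\dual{I_G}(c)_z=\bigl(-\pi_1(c_{z-1})-\pi_1(c_{z+1})+\pi_2(c_z),\ -\pi_1(c_z)\bigr).\]
This $\dual{I_G}$ is reversible with quiescent state $0$, satisfies $\pi_2\bigl(\dual{I_G}(c)_z\bigr)=-\pi_1(c_z)$, and, by a direct computation on each coordinate, obeys the quadratic identity $\dual{I_G}^{2}=-(\sigma+\sigma^{-1})\circ\dual{I_G}-\mathrm{id}$ (equivalently $I_G^{2}=-(\sigma+\sigma^{-1})\circ I_G-\mathrm{id}$, which transfers to $\dual{I_G}$ through Lemma~\ref{lem:dualcompose}).

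For the negative statement, fix $g\in G\setminus\{0\}$ (if $G$ is trivial the proposition is vacuous) and let $d$ be the configuration with $d_{-1}=(0,g)$, $d_{0}=(g,0)$ and $d_z=0$ otherwise. Expanding the formula above gives $\dual{I_G}(d)=\sigma^{-1}(-d)$, hence $\dual{I_G}^{2}(d)=\sigma^{-2}(d)$; thus $d$ is a soliton and every configuration in its orbit has rank $2$. Translating $d$ to have support $\{0,1\}$ and writing it as $\confcar\chi$, the character $\chi$ satisfies $\support(\chi)\subseteq\{0,1\}$ and $\rank(\chi\circ I_G^{t})\le 2$ for all $t$; so $I_G$ is not character‑diffusive on characters of support $\subseteq\{0,1\}$, and by Proposition~\ref{prop:chidiffusive} it does not randomize cylinders of support $\subseteq\{0,1\}$, i.e.\ cylinders of length $2$.

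For the positive statement, let $c$ be a rank‑$1$ finite configuration; by shift‑invariance assume $c_0=(\alpha,\beta)\ne(0,0)$ and $c_z=0$ for $z\ne0$. Put $a^{(t)}:=\pi_1\bigl(\dual{I_G}^{t}(c)\bigr)\in G^{\Z}$. The relation $\pi_2(\dual{I_G}(\cdot))=-\pi_1(\cdot)$ gives $\pi_2\bigl(\dual{I_G}^{t}(c)\bigr)=-a^{(t-1)}$ for $t\ge1$, so $\rank\bigl(\dual{I_G}^{t}(c)\bigr)\ge|\support(a^{(t-1)})|$; and the quadratic identity yields the linear recurrence $a^{(t+1)}=-(\sigma+\sigma^{-1})a^{(t)}-a^{(t-1)}$ with $a^{(0)}=\pi_1(c)$ and $a^{(-1)}=-\pi_2(c)$. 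Solving it (the recurrence is linear over $\Z[\sigma,\sigma^{-1}]$) gives $a^{(t)}=u_{t+1}(\alpha\delta_0)+u_{t}(\beta\delta_0)$, where $\delta_0$ is the configuration worth $1$ at $0$ and $u_n\in\Z[\sigma,\sigma^{-1}]$ is defined by $u_0=0$, $u_1=1$, $u_{n+1}=-(\sigma+\sigma^{-1})u_{n}-u_{n-1}$. An induction shows $u_{n}=(-1)^{n-1}\bigl(\sigma^{\,n-1}+\sigma^{\,n-3}+\cdots+\sigma^{-(n-1)}\bigr)$ for $n\ge1$: a sum of exactly $n$ monomials, each with coefficient $\pm1$, all of whose exponents have the same parity as $n-1$. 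Hence $u_{t+1}(\alpha\delta_0)$ and $u_{t}(\beta\delta_0)$ are supported on positions of opposite parity — so they cannot cancel — and since $\pm\alpha$ (resp.\ $\pm\beta$) is nonzero in $G$ exactly when $\alpha\ne0$ (resp.\ $\beta\ne0$), at least one of the two contributes its entire support; therefore $|\support(a^{(t)})|\ge t$, and $\rank\bigl(\dual{I_G}^{t}(c)\bigr)\ge t-1\to\infty$. By Proposition~\ref{prop:chidiffusive} applied to $\{0\}$, $I_G$ randomizes cylinders of length $1$ (in fact in the strong sense).

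The one genuinely delicate point is this last rank estimate: one must rule out a collapse of the Chebyshev‑type elements $u_n$ over an \emph{arbitrary} finite abelian group $G$. This is exactly what their explicit $\pm1$‑coefficient form is for — it keeps $u_n$ nonzero modulo the order of any element of $G$ — together with the fact that $u_t$ and $u_{t+1}$ sit on positions of opposite parity, which forbids the two terms of $a^{(t)}$ from interacting. The remaining ingredients (computing $\dual{I_G}$ from~\eqref{eq:defdual}, the quadratic identity, and the explicit soliton) are routine verifications; note that for $G=\Z_2$, where $I_G=H_2$, the same computation shows that $H_2$ still diffuses all rank‑$1$ characters even though — having solitons — it is not randomizing.
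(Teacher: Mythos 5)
Your proof is correct, and its skeleton matches the paper's: both arguments reduce the statement via Proposition~\ref{prop:chidiffusive} to showing that all rank-$1$ characters are strongly diffused while some character with support of size $2$ is not, and both exhibit the same rank-$2$ soliton in the dual (your configuration $d$ is, up to translation and the mirror inherent in dualization, the paper's character $\eta : x\mapsto\eta_0(\pi_1(x_0)+\pi_2(x_1))$, for which the paper verifies $\eta\circ I_G=\sigma^{-1}\circ\eta^{-1}$ directly). Where you genuinely differ is in how rank-$1$ diffusion is established. The paper's Lemma~\ref{lem:IG} computes the dependency function $\Delta_{I_G}(t,z)$ in closed form by a direct induction and observes that it is an isomorphism for every $|z|<t$, which immediately yields $\rank(\chi\circ I_G^t)\geq 2t-1$. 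You instead work on the dual side, extract the second-order linear recurrence $a^{(t+1)}=-(\sigma+\sigma^{-1})a^{(t)}-a^{(t-1)}$ from the identity $I_G^2=-(\sigma+\sigma^{-1})\circ I_G-\mathrm{id}$, and solve it with Chebyshev-type Laurent polynomials $u_n$ whose coefficients are all $\pm1$ and whose exponents have fixed parity; the parity separation of $u_t$ and $u_{t+1}$ plays exactly the role of the paper's case split on $t+z\bmod 2$. The two computations carry the same content — the interior coefficients of the iterates are $\pm\mathrm{id}$ up to swapping components, so nothing collapses over an arbitrary finite abelian group $G$ — and your bound $\rank\geq t-1$ is marginally weaker than the paper's $2t-1$ but amply sufficient. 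Your route is somewhat more systematic (the recurrence mechanism is robust to perturbing the rule, and it makes the non-cancellation issue explicit), at the cost of more bookkeeping; the paper's induction on $\Delta_{I_G}$ is shorter. All the individual verifications in your writeup (the formula for $\dual{I_G}$, the quadratic identity, the soliton computation, and the closed form for $u_n$) check out.
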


\begin{lemma}\label{lem:IG}
For $t>0$, we have: 
\[\Delta_{I_G}(t,z) = (g,h)\mapsto \left\{
\begin{array}{ccc}
0 & \mbox{if} &|z|>t\\
({(-1)^t}g,0)&\mbox{if}& |z|=t\\
({(-1)^t}g, {(-1)^{t+1}}h) & \mbox{if}&|z|<t, t+z = 0 \mod 2\\
({(-1)^t}h, {(-1)^{t+1}}g) & \mbox{if}&|z|<t, t+z = 1 \mod 2
\end{array}\right.\]
\end{lemma}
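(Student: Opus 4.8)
The plan is to reduce the statement to a scalar linear recurrence in one spatial variable and then solve that recurrence in closed form using (a rescaling of) the Chebyshev polynomials of the second kind. Since $I_G$ is an abelian CA, by linearity it suffices to track the two scalar families $a^{(t)}_z := \pi_1\bigl(\Delta_{I_G}(t,z)(g,h)\bigr)$ and $b^{(t)}_z := \pi_2\bigl(\Delta_{I_G}(t,z)(g,h)\bigr)$, each depending linearly on $(g,h)\in G^2$. Reading off the local rule, the second component of $I_G(c)_z$ is $\pi_1(c_z)$, so $b^{(t)}_z = a^{(t-1)}_z$ for $t\ge 1$; with the conventions that $a^{(0)}$ is $g$ at position $0$ and $0$ elsewhere and $a^{(-1)}$ is $h$ at position $0$ and $0$ elsewhere, this identity also holds for $t\ge 0$. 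Substituting it into the first component of the local rule gives $a^{(t+1)}_z = -\bigl(a^{(t)}_{z-1}+a^{(t)}_{z+1}+a^{(t-1)}_z\bigr)$ for all $t\ge 0$. Consequently $\Delta_{I_G}(t,z)(g,h) = \bigl(a^{(t)}_z,\, a^{(t-1)}_z\bigr)$ for $t\ge 1$, and everything is controlled by this recurrence together with the two initial conditions.

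Next I would pass to generating functions: let $A_t(X) = \sum_{z\in\mathbb{Z}} a^{(t)}_z X^z$, a Laurent polynomial with coefficients in $G$. The recurrence becomes $A_{t+1} = \mu\, A_t - A_{t-1}$ with $\mu := -(X+X^{-1})$, subject to $A_{-1}=h$, $A_0=g$ (both constants). A short induction, with the convention $U_{-1}=0$, then gives $A_t = U_t(\mu)\,g - U_{t-1}(\mu)\,h$, where $U_t$ is the sequence of polynomials defined by $U_0(Y)=1$, $U_1(Y)=Y$, $U_{t+1}(Y) = Y\,U_t(Y) - U_{t-1}(Y)$ (Chebyshev polynomials of the second kind up to rescaling); the contributions in $g$ and in $h$ arise by specializing the initial data.

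Then I would invoke two elementary identities, each a one-line induction on $t$: the geometric-sum identity $U_t(X+X^{-1}) = \sum_{k=0}^{t} X^{t-2k} = X^t + X^{t-2} + \cdots + X^{-t}$, and the parity identity $U_t(-Y) = (-1)^t\,U_t(Y)$. Combined, they give $U_t(\mu) = (-1)^t \sum_{k=0}^{t} X^{t-2k}$, whence
\[ A_t(X) = (-1)^t\, g \sum_{k=0}^{t} X^{t-2k} \;+\; (-1)^t\, h \sum_{k=0}^{t-1} X^{(t-1)-2k}. \]
Reading off coefficients: $a^{(t)}_z$ equals $(-1)^t g$ when $|z|\le t$ and $z\equiv t \pmod 2$, equals $(-1)^t h$ when $|z|\le t-1$ and $z\equiv t+1 \pmod 2$, and vanishes otherwise (the two parity conditions being mutually exclusive). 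Substituting this, together with the analogous description of $a^{(t-1)}_z$, into $\Delta_{I_G}(t,z)(g,h) = \bigl(a^{(t)}_z, a^{(t-1)}_z\bigr)$ and splitting into the cases $|z|>t$, $|z|=t$, and $|z|<t$ — the last according to the parity of $t+z$, using that $|z|<t$ with $z\equiv t\pmod 2$ forces $|z|\le t-2$ — reproduces exactly the four-line formula, after the simplification $(-1)^{t-1}=(-1)^{t+1}$.

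The only step needing care is the bookkeeping at the boundary cells $|z|=t$, where the formula switches regime; the generating-function route makes this automatic, since both the truncation $|z|\le t$ and the parity constraint are already encoded in the identity $U_t(X+X^{-1}) = \sum_{k=0}^{t} X^{t-2k}$, so no separate boundary analysis is required. (Alternatively, the formula can be proved by a direct induction on $t$ from $\Delta_{I_G}(t+1,z)(g,h) = \bigl(-\pi_1\Delta_{I_G}(t,z-1)(g,h) - \pi_1\Delta_{I_G}(t,z+1)(g,h) - \pi_2\Delta_{I_G}(t,z)(g,h),\ \pi_1\Delta_{I_G}(t,z)(g,h)\bigr)$, but that forces an explicit case analysis on the position of $z$ relative to $\pm t$ and on parities — precisely the bookkeeping the Chebyshev computation packages cleanly.)
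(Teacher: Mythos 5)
Your proof is correct, and it takes a genuinely different route from the paper, whose entire argument is the one-line ``by straightforward induction'' that you sketch as an alternative at the end. The reduction to the scalar recurrence $a^{(t+1)}_z=-\bigl(a^{(t)}_{z-1}+a^{(t)}_{z+1}+a^{(t-1)}_z\bigr)$ via $b^{(t)}_z=a^{(t-1)}_z$ is sound, the closed form $A_t=U_t(\mu)g-U_{t-1}(\mu)h$ with $\mu=-(X+X^{-1})$ follows by the induction you indicate, and the two Chebyshev identities $U_t(X+X^{-1})=\sum_{k=0}^{t}X^{t-2k}$ and $U_t(-Y)=(-1)^tU_t(Y)$ are standard and correctly applied; I verified that reading off coefficients reproduces all four cases of the lemma, including the boundary $|z|=t$ where only the $g$-term of $A_t$ survives and $a^{(t-1)}_z$ vanishes. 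What your approach buys is that the truncation to $|z|\le t$ and the parity alternation between $g$ and $h$ fall out of a single polynomial identity rather than a case analysis carried along through the induction; what the paper's approach buys is brevity and the fact that it needs no auxiliary machinery (the direct induction on the four-case formula is genuinely routine, which is why the authors do not spell it out). One small point of care: your generating-function manipulations implicitly treat $G$ as a $\mathbb{Z}$-module and the Laurent polynomials as having coefficients in the $\mathbb{Z}[X,X^{-1}]$-module $G^{(\mathbb{Z})}$, which is fine since only integer scalars ($\pm 1$) ever appear, but it is worth stating that the Chebyshev identities are being used over $\mathbb{Z}$ and then specialized, rather than over a field.
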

\begin{proof}
By straightforward induction.
\end{proof}

Now we use Proposition~\ref{prop:chidiffusive} in conjunction with the following proposition to prove the announced result. 
\begin{proposition}\label{prop:IGdiffusive}
$I_G$ is strongly diffusive on characters of rank 1, but not on characters of length 2.
\end{proposition}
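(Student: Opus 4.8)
The plan is to use the explicit formula for $\Delta_{I_G}(t,z)$ from Lemma~\ref{lem:IG} together with the rank bounds of Proposition~\ref{prop:rankdeps} and Lemma~\ref{lem:localrank}. For the positive statement (diffusivity on rank-$1$ characters), I would take a character $\chi$ of rank $1$, say with $\support(\chi)=\{0\}$ and elementary character $\chi_0\neq\mathbf 1$. By Lemma~\ref{lem:IG}, the dependency $\Delta_{I_G}(t,0)$ is $(g,h)\mapsto((-1)^t h,(-1)^{t+1}g)$ when $t$ is odd and $(g,h)\mapsto((-1)^t g,(-1)^{t+1}h)$ when $t$ is even — in both cases a bijection — and moreover $\Delta_{I_G}(t,t)=(g,h)\mapsto((-1)^t g,0)$ and $\Delta_{I_G}(t,-t)=(g,h)\mapsto((-1)^t g,0)$ are nonzero for every $t$. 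The first observation (via Proposition~\ref{prop:rankdeps} with $k=1$, i.e.\ counting $0$-isolated dependencies — here it suffices to observe directly that any position $z$ with $\Delta_{I_G}(t,-z)$ giving a nonzero contribution to $(\chi\circ I_G^t)_z$) shows that $-t$ and $+t$ both lie in $\support(\chi\circ I_G^t)$ as soon as $\chi_0$ is nonzero on the image of these maps, which it is since their images are the whole first coordinate copy of $\widehat G$ and $\chi_0\neq\mathbf 1$ — wait, more carefully: I would use that $(\chi\circ I_G^t)_z = \chi_0\circ\Delta_{I_G}(t,-z)$ (since $\support(\chi)=\{0\}$), so $z\in\support(\chi\circ I_G^t)$ iff $\chi_0\circ\Delta_{I_G}(t,-z)\neq\mathbf 1$. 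The light cone has width $2t+1$ and the maps on the boundary ($|z|=t$) and interior are all nonzero and "large" (bijective in the interior), so for a fixed nontrivial $\chi_0$ the number of positions $z\in[-t,t]$ with $\chi_0\circ\Delta_{I_G}(t,-z)\neq\mathbf 1$ grows linearly in $t$: on the interior, $\Delta_{I_G}(t,z)$ ranges over only two distinct maps as $z$ varies (depending on the parity of $t+z$), but each is a bijection, so $\chi_0$ composed with it is nontrivial, giving roughly $t$ nonzero cells. Hence $\rank(\chi\circ I_G^t)\to\infty$, in fact $\rank(\chi\circ I_G^t)\geq t$.

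For the negative statement, I would exhibit a specific character $\chi$ of rank $2$ that is \emph{not} diffused: take the elementary characters so that $\support(\chi)=\{0,1\}$ and $\chi=\chi_0\cdot\chi_1$ chosen to cancel against the two-map structure of the interior of the light cone. The key point from Lemma~\ref{lem:IG} is that in the interior, consecutive positions $z$ and $z+1$ see the maps $(g,h)\mapsto((-1)^t g,(-1)^{t+1}h)$ and $(g,h)\mapsto((-1)^t h,(-1)^{t+1}g)$ (in some order), and $(\chi\circ I_G^t)_z = \chi_{-z}\circ\Delta_{I_G}(t,0)\cdot\chi_{-z+1}\circ\Delta_{I_G}(t,1)$ — i.e.\ the contribution at a cell is a product over the two overlapping elementary characters, and these two maps are "swaps" of each other up to sign. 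So I would pick $\chi_0,\chi_1$ related by this swap-with-sign so that for every interior position (of the right parity) the two contributions multiply to $\mathbf 1$; concretely, identifying $\widehat{G^2}=\widehat G\times\widehat G$, choose $\chi_0=(\psi,\psi')$ and $\chi_1=(\psi',\psi)$ or with a sign twist depending on $t\bmod 2$, so that only the two boundary cells $|z|=t$ (and possibly $O(1)$ cells near the boundary) survive. Then $\rank(\chi\circ I_G^t)$ stays bounded (indeed equals $2$ for all large $t$ of the right parity, along a subsequence of density $1$), so $\chi$ is not diffused even in density, hence $I_G$ does not randomize cylinders of length $2$ by Proposition~\ref{prop:chidiffusive}. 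Equivalently, this bounded-rank character is a soliton (up to shift) for $\widehat{I_G}$.

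The main obstacle is the second part: I need to produce the right pair $(\chi_0,\chi_1)$ and check that the telescoping cancellation along the interior of the light cone is exact for all large $t$ (modulo parity), not just approximate — this requires carefully tracking the $(-1)^t$ and $(-1)^{t+1}$ signs and the parity condition $t+z\equiv 0,1\pmod 2$ in Lemma~\ref{lem:IG}. I expect that one gets a clean cancellation only along $t$ of a fixed parity, or after composing $I_G$ with itself once (working with $I_G^2$), which still gives a density-$1$ subsequence and hence suffices to defeat diffusion in density. The positive part is routine once the bijectivity of the interior dependencies is in hand.
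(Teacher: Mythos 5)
Your first half is correct and is essentially the paper's argument: since $\support(\chi)=\{0\}$ one has $(\chi\circ I_G^t)_z=\chi_0\circ\Delta_{I_G}(t,-z)$, and because $\Delta_{I_G}(t,z)$ is a bijection for $|z|<t$ (Lemma~\ref{lem:IG}), every interior position contributes a nontrivial elementary character, so $\rank(\chi\circ I_G^t)\geq 2t-1\to\infty$.

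The second half has a genuine gap. You correctly identify the right object to look for --- a rank-$2$ character annihilated by a ``swap-type'' cancellation, equivalently a soliton for $\dual{I_G}$ --- but you never exhibit it, and you flag as an unresolved obstacle the need to verify an exact telescoping cancellation across the whole light cone at time $t$, for all large $t$, tracking the $(-1)^t$ signs and the parity condition in Lemma~\ref{lem:IG}, possibly only along $t$ of fixed parity. That obstacle is an artifact of your verification strategy, not of the problem: once you aim for a soliton of $\dual{I_G}$, a \emph{single} application of the local rule suffices, and the bounded rank for all $t$ then follows by iteration. Concretely, take any nontrivial $\eta_0\in\dual{G}$ and set $\eta(x)=\eta_0\bigl(\pi_1(x_0)+\pi_2(x_1)\bigr)$, a character of support $\{0,1\}$. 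Then
\[
\eta\circ I_G(x)=\eta_0\bigl(-[\pi_1(x_{-1})+\pi_1(x_1)+\pi_2(x_0)]+\pi_1(x_1)\bigr)=\eta_0\bigl(-\pi_1(x_{-1})-\pi_2(x_0)\bigr),
\]
i.e.\ the $\pi_1(x_1)$ terms cancel and $\eta\circ I_G$ is a shifted copy of $\eta^{-1}$. Hence $\confcar{\eta}$ is a soliton for $\dual{I_G}$ and $\rank(\eta\circ I_G^t)=2$ for \emph{every} $t$ --- no parity restriction, no passage to $I_G^2$, no density-one subsequence. This also yields the stronger conclusion you actually need downstream (failure of diffusion in density, not merely of strong diffusion) to rule out randomization of length-$2$ cylinders via Proposition~\ref{prop:chidiffusive}. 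Note that your candidate with $\chi_0=(\psi,\psi')$, $\chi_1=(\psi',\psi)$ is more symmetric than necessary; the cancellation only needs to pair the first component at cell $0$ with the second component at cell $1$, because $\pi_2(I_G(c)_z)=\pi_1(c_z)$ copies the first track verbatim.
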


\begin{proof}
Let $\chi$ be a character of rank 1, i.e. $\chi(x) = \chi_0(x_0)$ with $\chi_0\neq \textbf{1}$. For $t>0$:
\begin{align*}\chi\circ I_G^t(x) &= \chi_0\left(\sum_{z=-t}^t\Delta_{I_G}(t,z)(x_{-z})\right)\\
&=\prod_{z=-t}^t \chi_0\circ\Delta_{I_G}(t,z)(x_{-z})
\end{align*} 
$\chi_0$ is nontrivial and by the previous Lemma, $\Delta_{I_G}(t,z)$ is an isomorphism when $|z|<t$. We deduce that $\chi_0\circ\Delta_{I_G}(t,z)$ is nontrivial whenever $|z|<t$ and therefore $\rank(\chi\circ I_G^t(x))\geq 2t-1 \to \infty$: $I_G$ strongly diffuses characters of rank 1.


For the second point, take any elementary character $\eta_0 \in \dual{G}$ and define another character $\eta : x\mapsto \eta_0(\pi_1(x_0)+ \pi_2(x_1))$. Then, by a straightforward computation:

\begin{align*}\eta\circ I_G(x) =& \eta_0\bigl(-[\pi_1(x_{-1})+\pi_1(x_{1})+\pi_2(x_0)]+\pi_1(x_1)\bigr)\\
=&\eta_0\bigl(-\pi_1(x_{-1}) - \pi_2(x_0)\bigr) = \s^{-1}\circ \eta(x)^{-1}
\end{align*}
which means $\eta$ a soliton for $\dual F$ of rank $2$.
\end{proof}

\newcommand\blowup{I_{G,n}}
Now we introduce the cellular automaton $\blowup$, which consists in applying the local rule of $I_G$ on the neighbourhood $\{z_{-n}, z_0, z_n\}$:

\begin{align*}
  \blowup(c)_z &= \bigl(-[\pi_1(c_{z-n})+\pi_1(c_{z+n})+\pi_2(c_z)],
  \pi_1(c_z)\bigr)
\end{align*}
Intuitively, a space-time diagram for $\blowup$ consists of $n$ intertwined space-time diagrams for $I_G$.

\begin{theorem}
For any $n \geq 1$, $\blowup$ randomizes cylinders of length $n$, but does not randomize cylinders of length $n+1$.
\end{theorem}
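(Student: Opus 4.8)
The plan is to reduce the whole statement to the already-analyzed automaton $I_G$ by a ``demultiplexing'' conjugacy, and then invoke Proposition~\ref{prop:chidiffusive} to pass freely between character-diffusivity and randomization of cylinders. First I would introduce the map $\Pi : (G^2)^\Z \to ((G^2)^\Z)^n$ defined by $\Pi(c) = (c^{(0)},\dots,c^{(n-1)})$ with $c^{(r)}_z = c_{nz+r}$; this is a homeomorphism and a topological group isomorphism, and a one-line computation from the definition of $\blowup$ gives $\Pi(\blowup(c)) = (I_G(c^{(0)}),\dots,I_G(c^{(n-1)}))$, i.e. $\Pi$ conjugates $\blowup$ to the product of $n$ independent copies of $I_G$. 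The consequence I need for characters: grouping elementary characters by residue class modulo $n$, every $\chi\in\dual{(G^2)^\Z}$ factors as $\chi = (\bigotimes_r \chi^{(r)})\circ\Pi$ with $\chi^{(r)}\in\dual{(G^2)^\Z}$ finitely supported, so $\chi\circ\blowup^t = (\bigotimes_r (\chi^{(r)}\circ I_G^t))\circ\Pi$ and, because the supports on the $n$ sublattices reassemble into pairwise disjoint subsets of $\Z$, $\rank(\chi\circ\blowup^t) = \sum_{r=0}^{n-1}\rank(\chi^{(r)}\circ I_G^t)$.

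For the positive half, by shift-invariance it is enough to treat cylinders at position $0$, i.e. characters of support $\subset\{0,\dots,n-1\}$. Fix a nontrivial such $\chi$. Since $n$ consecutive integers meet each residue class modulo $n$ exactly once, each $\chi^{(r)}$ has rank at most $1$ and is supported at position $0$ of its sublattice, and some $\chi^{(r_0)}$ is nontrivial, hence of rank exactly $1$. Proposition~\ref{prop:IGdiffusive} says $I_G$ strongly diffuses rank-$1$ characters, so $\rank(\chi^{(r_0)}\circ I_G^t)\to\infty$, whence $\rank(\chi\circ\blowup^t)\geq\rank(\chi^{(r_0)}\circ I_G^t)\to\infty$. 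Thus $\blowup$ is strongly character-diffusive on characters of support $\subset\{0,\dots,n-1\}$, and Proposition~\ref{prop:chidiffusive} yields strong randomization on cylinders of support $\subset\{0,\dots,n-1\}$; together with shift-invariance this is exactly randomization of cylinders of length $n$.

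For the negative half, I would pick a nontrivial $\eta_0\in\dual G$ and set $\eta : x\mapsto\eta_0(\pi_1(x_0)+\pi_2(x_n))$, a character of support $\{0,n\}$, which lies inside the length-$(n+1)$ interval $\{0,\dots,n\}$ but entirely inside a single residue class modulo $n$. A direct computation (exactly the ``blow-up'' of the one in the proof of Proposition~\ref{prop:IGdiffusive}) gives $\eta\circ\blowup(x) = \eta_0(-\pi_1(x_{-n})-\pi_2(x_0))$, a character of support $\{-n,0\}$; iterating, $\eta\circ\blowup^t$ is always a translate of $\eta$ or of $\eta^{-1}$, so $\rank(\eta\circ\blowup^t)=2$ for all $t$. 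Hence $\blowup$ fails to diffuse $\eta$ even along a set of times of density one, and Proposition~\ref{prop:chidiffusive} applied with $\U=\{0,\dots,n\}$ shows $\blowup$ does not randomize, even in density, cylinders of support $\subset\{0,\dots,n\}$; \emph{a fortiori} it does not randomize cylinders of length $n+1$.

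Once $\Pi$ is set up the argument is bookkeeping, and I expect the only genuinely delicate point to be the identity $\rank(\chi\circ\blowup^t)=\sum_r\rank(\chi^{(r)}\circ I_G^t)$ — that is, checking that transporting the dynamics through $\Pi$ turns the rank of an image character into the sum of the ranks of the images on the $n$ sublattices — together with the elementary but decisive dichotomy behind the whole construction: an interval of length $n$ can only carry rank-$\le 1$ pieces on each sublattice (and those $I_G$ diffuses), whereas the two-point set $\{0,n\}$ fits entirely inside one sublattice, so the $I_G$-soliton survives the blow-up intact.
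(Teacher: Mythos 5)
Your proof is correct and follows essentially the same route as the paper's: your demultiplexing conjugacy $\Pi$ and the rank identity $\rank(\chi\circ\blowup^t)=\sum_r\rank(\chi^{(r)}\circ I_G^t)$ are a formalization of the paper's Lemma on $\Delta_{\blowup}$ (the ``$n$ intertwined space-time diagrams'' observation), and both arguments then rest on the same two facts — an interval of length $n$ meets each residue class mod $n$ at most once, so only rank-$1$ pieces (which $I_G$ diffuses by Proposition~\ref{prop:IGdiffusive}) appear on each sublattice, while $\{0,n\}$ sits in a single sublattice so the rank-$2$ soliton of $\dual{I_G}$ survives — before concluding via Proposition~\ref{prop:chidiffusive}. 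The only cosmetic difference is that the paper re-derives the lower bound $\rank(\chi\circ\blowup^t)\geq 2\lfloor t/n\rfloor-1$ directly from the dependency function, whereas you invoke the diffusion of rank-$1$ characters as a black box.
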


It is obvious (straightforward induction) that: 
\begin{lemma}\label{lem:blowup}
\[\Delta_{\blowup}(t,z) = \left\{\begin{array}{ll}\Delta_{I_G}(t,z/n)&\mbox{if }z=0\mod n\\0&\mbox{otherwise}\end{array}\right.\]
\end{lemma}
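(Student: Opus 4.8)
The plan is to make precise the stated intuition that $\blowup$ consists of $n$ independent, intertwined copies of $I_G$ living on the arithmetic progressions of common difference $n$, and to read off the dependency formula from this decomposition. For each residue $r\in\{0,\dots,n-1\}$, I would introduce the \emph{slicing map} $S_r:(G^2)^\Z\to(G^2)^\Z$ defined by $S_r(c)_j = c_{nj+r}$, which extracts the cells sitting in positions congruent to $r$ modulo $n$ and reindexes them by $j$. Since the only cells that $\blowup$ couples are those at distance a multiple of $n$, each slice should evolve under the local rule of $I_G$, and this is exactly the relation I will exploit.

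First I would verify the semi-conjugacy $S_r\circ\blowup = I_G\circ S_r$ for every $r$. This is a direct computation: expanding $S_r(\blowup(c))_j = \blowup(c)_{nj+r}$ and using the identities $c_{nj+r\pm n} = c_{n(j\pm 1)+r} = S_r(c)_{j\pm 1}$ together with $c_{nj+r} = S_r(c)_j$, the expression matches $I_G(S_r(c))_j$ termwise, with the two components $\pi_1$ and $\pi_2$ handled separately but identically. Iterating this commutation gives $S_r\circ\blowup^{t} = I_G^{t}\circ S_r$ for all $t\geq 0$.

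Next I would feed the defining seed of the dependency function into this identity. Recall that $\Delta_{\blowup}(t,z)(q) = \blowup^t(x_q)_z$, where $x_q$ is the configuration worth $q$ at position $0$ and $0$ elsewhere. Writing $z = nm+r$ with $0\leq r<n$, we get $\blowup^t(x_q)_z = S_r(\blowup^t(x_q))_m = I_G^{t}(S_r(x_q))_m$. The entire case distinction then reduces to what $S_r$ does to the seed: if $r=0$, then $S_0(x_q)_j = (x_q)_{nj}$ equals $q$ exactly when $j=0$, so $S_0(x_q)=x_q$ and $\Delta_{\blowup}(t,z)(q) = I_G^{t}(x_q)_m = \Delta_{I_G}(t,z/n)(q)$; if $r\neq 0$, then $nj+r\neq 0$ for every $j$, so $S_r(x_q)$ is the all-zero configuration, and since $0$ is quiescent for $I_G$ we obtain $\Delta_{\blowup}(t,z)=0$. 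This is precisely the claimed formula.

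The argument is routine, and the only point demanding care is the index bookkeeping: one must check that the displacements $z\mapsto z\pm n$ in the neighbourhood of $\blowup$ correspond exactly to the displacements $j\mapsto j\pm 1$ of $I_G$ after slicing, and that the single-cell seed survives (unchanged) only on the slice $r=0$ and is annihilated by quiescence on every other slice. Equivalently, if one prefers the direct induction on $t$ indicated in the statement, the same dichotomy appears through the recurrence $\Delta_{\blowup}(t+1,z)(q)=\blowup(\blowup^t(x_q))_z$: when $n\mid z$ the three relevant dependencies $\Delta_{\blowup}(t,z-n)$, $\Delta_{\blowup}(t,z)$, $\Delta_{\blowup}(t,z+n)$ are combined by the $I_G$-rule applied to $\Delta_{I_G}(t,\cdot)$ and reproduce $\Delta_{I_G}(t+1,z/n)$, whereas when $n\nmid z$ all three lie in nonzero residue classes modulo $n$ and vanish by the inductive hypothesis, forcing $\Delta_{\blowup}(t+1,z)=0$.
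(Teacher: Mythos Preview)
Your proof is correct. The paper itself gives no proof beyond the parenthetical ``straightforward induction'', so your write-up is strictly more detailed than what appears there.

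Your main argument via the slicing maps $S_r$ is a clean conceptual packaging of exactly the intuition the paper states just before the lemma (``a space-time diagram for $\blowup$ consists of $n$ intertwined space-time diagrams for $I_G$''): the semi-conjugacy $S_r\circ\blowup = I_G\circ S_r$ is the precise content of that sentence, and from it the dependency formula drops out in one line by evaluating on the seed $x_q$. The alternative you sketch at the end, the direct induction on $t$ through the recurrence for $\Delta_{\blowup}(t+1,z)$, is presumably what the paper has in mind by ``straightforward induction''; it is the same computation unrolled step by step rather than factored through the conjugacy. Either route is fine, and your slicing version has the advantage of making the independence of the $n$ residue classes explicit once and for all rather than rediscovering it at each time step.
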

so that we can use Lemma~\ref{lem:IG}. As in the previous case, we use Proposition~\ref{prop:chidiffusive} in conjunction with the following:
\begin{proposition}
$\blowup$ is strongly \chidiffusive on characters of support $\subset [0,n-1]$, but not on characters of support $\{0,n\}$.
\end{proposition}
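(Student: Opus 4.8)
The plan is to exploit the fact that the local rule of $\blowup$ reads only the cells $z-n$, $z$ and $z+n$, so it preserves each residue class $n\Z+r$ ($0\le r<n$) and acts on it exactly as $I_G$ does. Concretely, writing $y^{(r)}\in(G^2)^\Z$ for the configuration $y^{(r)}_i=x_{ni+r}$, I would first record the decoupling identity
\[\blowup^t(x)_{ni+r}=I_G^t\bigl(y^{(r)}\bigr)_i\qquad(i\in\Z,\ t\in\N,\ 0\le r<n),\]
which follows by a straightforward induction on $t$ (or from Lemma~\ref{lem:blowup} via Equation~\ref{eq:dependency}, since $\Delta_{\blowup}(t,z)$ vanishes unless $n\mid z$). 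Both halves of the proposition reduce to this identity together with Proposition~\ref{prop:IGdiffusive}.

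For the first half, let $\chi\neq\mathbf 1$ be a character with $\support(\chi)\subseteq[0,n-1]$. Since the positions $0,\dots,n-1$ lie in pairwise distinct residue classes modulo $n$, $\chi$ has at most one elementary character $\chi_r$ in each class, and I would identify $\chi_r$ with the rank-$1$ character $\psi_r$ of $(G^2)^\Z$ given by $\psi_r(y)=\chi_r(y_0)$, so that the decoupling identity gives
\[\chi\circ\blowup^t(x)=\prod_{r\in\support(\chi)}\bigl(\psi_r\circ I_G^t\bigr)\bigl(y^{(r)}\bigr).\]
The elementary characters occurring in the $r$-th factor sit at cells congruent to $r$ modulo $n$, so the factors have pairwise disjoint supports and $\rank(\chi\circ\blowup^t)=\sum_{r}\rank(\psi_r\circ I_G^t)$. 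As $\chi$ is nontrivial, some $\psi_r$ is a nontrivial rank-$1$ character, and Proposition~\ref{prop:IGdiffusive} gives $\rank(\psi_r\circ I_G^t)\to\infty$; hence $\rank(\chi\circ\blowup^t)\to\infty$. Feeding this into Proposition~\ref{prop:chidiffusive} with $\U=\{0,\dots,n-1\}$ yields strong randomization of all cylinders of length $n$.

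For the second half, I would take the character $\eta(x)=\eta_0\bigl(\pi_1(x_0)+\pi_2(x_1)\bigr)$ (with $\eta_0\in\dual{G}$ nontrivial) constructed in the proof of Proposition~\ref{prop:IGdiffusive}, which has $\support(\eta)=\{0,1\}$ and for which $\rank(\eta\circ I_G^t)$ remains bounded — in fact equal to $2$ — for all $t$, since it is a soliton for $\dual{I_G}$. Its lift $\tilde\eta(x)=\eta_0\bigl(\pi_1(x_0)+\pi_2(x_n)\bigr)$ has support $\{0,n\}$ and lives entirely in the residue class $r=0$, so the decoupling identity gives $\tilde\eta\circ\blowup^t(x)=\bigl(\eta\circ I_G^t\bigr)\bigl(y^{(0)}\bigr)$ and therefore $\rank(\tilde\eta\circ\blowup^t)=\rank(\eta\circ I_G^t)$ for every $t$. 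Thus $\blowup$ fails to diffuse — not even in density — the character $\tilde\eta$ of support $\{0,n\}$, and Proposition~\ref{prop:chidiffusive} (with $\U=\{0,\dots,n\}$) shows that $\blowup$ does not randomize cylinders of length $n+1$.

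The argument is essentially bookkeeping once the decoupling identity is in place; the points to state with care are the additivity $\rank(\chi\circ\blowup^t)=\sum_r\rank(\psi_r\circ I_G^t)$, which relies on a character supported in $[0,n-1]$ meeting each residue class at most once, and the companion remark that $\rank(\tilde\eta\circ\blowup^t)=\rank(\eta\circ I_G^t)$ holds also at times where $\eta\circ I_G^t$ has drifted onto negative coordinates (the lift of a residue class is all of $n\Z$, not just $n\N$). Neither is hard, but both require tracking that the alphabet is $G^2$ rather than $G$.
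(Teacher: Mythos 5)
Your proof is correct and takes essentially the same approach as the paper: both rest on the interleaving structure of $\blowup$ captured in Lemma~\ref{lem:blowup}, and your second half (the lifted soliton $\tilde\eta$ of support $\{0,n\}$) is exactly the paper's argument. The only difference is packaging — you make the decoupling identity explicit and reduce the first half to Proposition~\ref{prop:IGdiffusive} via additivity of rank over residue classes, whereas the paper re-runs the rank computation directly to get the bound $\rank(\chi\circ \blowup^t)\geq 2\lfloor t/n\rfloor-1$; both are sound.
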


\begin{proof}
Let $\chi$ be a nonzero character of support $\subset [0,n-1]$, that is, $\chi = \prod_{i=0}^{n-1}\chi_i$; without loss of generality assume $\chi_0\neq \textbf{1}$. We have for any $t>0$:
\begin{align*}
\chi\circ \blowup^t(x) &= \prod_{i=0}^{n-1}\chi_i\left(\sum_{z=-t}^t\Delta_{\blowup}(t,z+i)(x_{-z})\right)\\
&=\prod_{z=-t}^t\left(\prod_{i=0}^{n-1}\chi_i\circ \Delta_{\blowup}(t,z+i)\right)(x_{-z})
\end{align*} 
Now for any $z$ such that $z = 0 \mod n$, the corresponding term in the previous equation is $\chi_0\circ \Delta_{\blowup}(t,z)$ by Lemma~\ref{lem:blowup}, and this term is an isomorphism when $t<z<t$ by Lemma~\ref{lem:IG}. Therefore $\rank(\chi\circ \blowup^t)\geq 2\left\lfloor\frac tn\right\rfloor-1 \to \infty$.

For the second point, take any nontrivial elementary character $\eta_0 \in \dual{G}$ and define another character $\eta : x\mapsto \eta_0(\pi_1(x_0)+ \pi_2(x_n))$. This is a soliton for $\dual F$ of support $\{0,n\}$ by Lemma~\ref{lem:blowup} and the same proof as Proposition~\ref{prop:IGdiffusive}.
\end{proof}

\section{Open problems}
\label{sec:open}

Building upon the approach of \cite{PivatoYassawi1,PivatoYassawi2} we completely characterized randomization in density for abelian CAs. Furthermore, we provided examples of other forms of randomization, most notably strong randomization (in simple convergence), that can only happen for abelian CA whose coefficient are noncommuting endomorphisms.

As mentioned by several authors, the most important research direction for randomization in CA is to develop tools and techniques to go beyond the abelian case: CA with a nonabelian group structure or nonlinear CA~\cite{host2003uniform,Kari2015}. There is some experimental evidence pointing at nonlinear randomization candidates \cite{These, Taati}. The class of bipermutive CA, although it does not encompass all candidates, has some relevant related work: the set of invariant well-behaved (i.e. Gibbs) measures is limited to the uniform Bernoulli measure (\cite{Kari2015}, Corollary 46) and it exhibits a topological analogue to randomization \cite{SaloTorma}. Apart from the existence of examples, one can ask in the general case whether randomization is a structural property (see Proposition~\ref{prop:chidiffusive}), whether it is preserved by elementary operations and whether it is implied by some topological properties like positive expansivity (see Corollary~\ref{coro:randomizingstability}).

We believe that several intermediate questions raised by the present work are worth being considered:

\begin{itemize}
\item Is strong randomization of an abelian CA equivalent to strong randomization of its dual?
\item What is the importance of reversibility in the above examples of strong randomization? Our proof relies on reversibility and the smoothness it implies on the evolution of rank of characters. Can a reversible abelian CA be randomizing in density but not strongly randomizing? What are the examples of strongly randomizing non-reversible abelian CA?
\item The notions of soliton and diffusivity can be defined for arbitrary CAs using diamonds (like in the notion of pre-injectivity): a pair ${(x,y)}$ such that ${\Delta(x,y)\subseteq\Z}$ (the set of positions where $x$ and $y$ differ) is finite. We can define a soliton as a diamond $(x,y)$ such that ${\Delta(F^t(x),F^t(y))}$ has a bounded diameter (independent of $t$). On the contrary diffusivity can be defined as the property of having ${|\Delta(F^t(x),F^t(y))|\rightarrow\infty}$ for any diamond ${(x,y)}$. What are the links with topological properties like positive expansivity, pre-expansivity\footnote{Positive pre-expansivity, introduced in \cite{pre-exp}, is the property of being positively expansive on diamonds. A reversible CA can be positively pre-expansive (but never positively expansive) like the example $F_2$ of this paper. Corollary~\ref{coro:randomizingstability} states that, for abelian CA, a direction of positive expansivity implies randomization in density: this implication actually holds for directions of positive pre-expansivity.}, mixing or transitivity? Are there links with randomization beyond abelian CAs? The results of \cite{sparseville} can be useful for that line of research.
\item Theorem~\ref{thm:commutingcase} gives a procedure to test randomization in density for abelian CA with commuting coefficients because the constant $N$ can be explicitely computed. Is there an algorithm to decide the presence of a soliton in abelian CA? What about solitons in general CAs (again formalizing through diamonds as above)?
\end{itemize}

\section{Acknowledgments}
We are grateful to the anonymous referee for her/his help to improve this paper and correct several mistakes of the previous version.
Benjamin Hellouin de Menibus acknowledges the financial support of Basal project No. PFB-03 CMM, Universidad de Chile.

\bibliographystyle{plain}
\bibliography{paper_ca}
\end{document}